\numberwithin{equation}{section}
\renewcommand\vec{\bm}
\newcommand{\n}[1]{\|{#1}\|}
\newtheorem{theorem}{Theorem}[section]
\newtheorem{lemma}[theorem]{Lemma}
\newtheorem{Proposition}[theorem]{Proposition}
\newtheorem{Conjecture}[theorem]{Conjecture}
\newtheorem{Corollary}[theorem]{Corollary}
\DeclarePairedDelimiter{\ceil}{\lceil}{\rceil}
\DeclarePairedDelimiter{\floor}{\lfloor}{\rfloor}
\title[Energy estimates in sum-product and convexity problems]{Energy estimates in sum-product and convexity problems}
\author[Akshat Mudgal]{Akshat Mudgal}
\subjclass[2010]{11B13, 11B30} 
\keywords{Sum-product phenomenon, Balog--Szemer\'edi--Gowers theorem}
\date{} 
\address{Department of Mathematics, Purdue University, 150 N. University Street, West Lafayette, IN 47907-2067, USA }
\address{School of Mathematics, University of Bristol, Fry Building, Woodland Road, Bristol, BS8 1UG, UK}
\email{am16393@bristol.ac.uk, amudgal@purdue.edu}
\renewcommand\vec{\bm}
\begin{document}

\maketitle
\begin{abstract}
We prove a new class of low-energy decompositions which, amongst other consequences, imply that any finite set $A$ of integers may be written as $A = B \cup C$, where $B$ and $C$ are disjoint sets satisfying
\[ |\{ (b_1, \dots, b_{2s}) \in B^{2s} \ | \ b_1 + \dots + b_{s} = b_{s+1} + \dots + b_{2s}\}| \ll_{s} |B|^{2s - (\log \log s)^{1/2 - o(1)}} \]
and
\[ |\{ (c_1, \dots, c_{2s}) \in C^{2s} \ | \ c_1 \dots c_{s} = c_{s+1} \dots c_{2s} \}| \ll_{s} |C|^{2s - (\log \log s)^{1/2 - o(1)}}.\]
This generalises previous results of Bourgain--Chang on many-fold sumsets and product sets to the setting of many-fold energies, albeit with a weaker power saving, consequently confirming a speculation of Balog--Wooley. We further use our method to obtain new estimates for $s$-fold additive energies of $k$-convex sets, and these come arbitrarily close to the known lower bounds as $s$ becomes sufficiently large. 
\end{abstract}

\section{Introduction}

This paper investigates topics surrounding the sum--product phenomenon, a growing collection of results that concern themselves with notions of additivity and multiplicativity amongst algebraic sets. This was first studied by Erd\H{o}s and Szemer\'edi \cite{ES1983}, who analysed the cardinalities of the sumset $sA$ and the product set $A^{(s)}$, when $s$ is a natural number and $A$ is a finite, non-empty subset of integers. Here, we write
\[ sA = \{ a_1 + \dots + a_s \ | \ a_1, \dots, a_s \in A\} \ \text{and} \ A^{(s)}  = \{ a_1 \dots a_s \ | \ a_1, \dots, a_s \in A\}. \]
These sets measure the arithmetic structure of $A$, as evinced by the fact that $|sA| \ll_{s} |A|$ whenever $A$ is an arithmetic progression, and $|A^{(s)}| \ll_{s} |A|$ when $A$ is a geometric progression. Erd\H{o}s and Szemer\'edi conjectured that these two notions can not simultaneously occur for a given finite set $A$ of integers, whence, at least one of the sumset or the product set must be extremally large. 

\begin{Conjecture} \label{ersz}
Let $s \geq 2$, let $\epsilon >0$ and let $A$ be a finite subset of integers. Then 
\[ |s A| + |A^{(s)}| \gg_{s, \epsilon} |A|^{s - \epsilon}. \]
\end{Conjecture}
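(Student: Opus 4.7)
The plan is to attack the statement via a low-energy decomposition strategy in the spirit of Bourgain--Chang, and its energy refinements such as the one advertised in this paper's abstract. The aim is to partition $A = B \cup C$ into disjoint pieces so that $B$ has small $s$-fold additive energy $E^+_s(B)$ and $C$ has small $s$-fold multiplicative energy $E^\times_s(C)$, ideally with each energy bounded by $|B|^{s+o(1)}$ and $|C|^{s+o(1)}$ respectively. Given such a decomposition, Cauchy--Schwarz on the additive representation function yields $|sB| \gg |B|^{s-\epsilon}$, and the analogous inequality on the multiplicative side delivers $|C^{(s)}| \gg |C|^{s-\epsilon}$. Since $\max\{|B|,|C|\} \geq |A|/2$, combining these with $|sB| \leq |sA|$ and $|C^{(s)}| \leq |A^{(s)}|$ would finish the conjecture.

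To produce such a decomposition I would iterate an additive--multiplicative dichotomy. Starting from $A_0 = A$, if $E^+_s(A_0)$ is already close to $|A_0|^s$, then $A_0$ contributes to $B$ and one is done on the additive side. Otherwise, by a many-variable Balog--Szemer\'edi--Gowers type refinement, there exists a large subset $A_0' \subset A_0$ with small $s$-fold additive doubling, hence a strong additive structure. One then argues, via a sum--product input, that such a set cannot simultaneously carry significant multiplicative structure, so either $A_0'$ can be placed in $C$ with small multiplicative energy, or its multiplicative behaviour can be further split. Iterating this process on the remaining piece $A \setminus A_0'$ should, in principle, produce the desired $(B,C)$-partition.

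The main obstacle, and precisely what prevents this from being a known theorem, is controlling the cumulative exponent loss across the iteration. Each passage through a BSG-style refinement costs polynomial factors, and chaining these with quantitative sum--product transfers bleeds off a substantial portion of the exponent. The state of the art, including the bound announced in this paper, saves only $(\log\log s)^{1/2 - o(1)}$ from the trivial exponent $2s$ on the energy side, which is vastly weaker than the $s - \epsilon$ predicted by \textbf{Conjecture \ref{ersz}}. Reaching the conjectured bound would plausibly require either a near-lossless many-fold sum--product inequality or a structurally new input showing that additive and multiplicative $s$-fold energies genuinely cannot both be small. I would therefore expect this step -- the quantitative efficiency of the additive--multiplicative dichotomy -- to be the true bottleneck, and I would not expect a complete resolution within the techniques currently available, even for $s = 2$, where the best known exponent is far below $2 - \epsilon$.
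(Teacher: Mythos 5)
The statement you were asked about is Conjecture \ref{ersz}, the Erd\H{o}s--Szemer\'edi conjecture; it is open, the paper contains no proof of it, and your write-up is (as you yourself acknowledge) not a proof either, so there is nothing to verify as correct. That said, it is worth naming concretely why the route you sketch cannot work even in principle, beyond the quantitative losses you identify. Your first paragraph reduces the conjecture to producing a partition $A = B \cup C$ with $E_{s}(B) \ll |B|^{s+o(1)}$ and $M_{s}(C) \ll |C|^{s+o(1)}$. Such a decomposition does not exist in general: as the paper recalls (following Balog--Wooley), there are arbitrarily large sets $A \subseteq \mathbb{N}$ such that \emph{every} partition $A = B \cup C$ satisfies either $E_{s_1}(B) \gg |A|^{s_1 + (s_1-1)/3}$ or $M_{s_2}(C) \gg |A|^{s_2 + (s_2-1)/3}$; since one of $B, C$ has size $\gg |A|$, at least one of your two target bounds must fail by a power of the relevant set. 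So the bottleneck is not merely the cumulative BSG losses in the iteration --- the qualitative target of the decomposition strategy is itself unattainable, and any proof of the conjecture via energies would have to use a weaker, asymmetric notion of the decomposition (or bypass energies entirely, as the original sumset--product set formulation does).

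Your remaining observations are accurate as a survey of the state of the art: the paper's own results (Theorem \ref{gemn}, Corollary \ref{bta}) give a saving of only $(\log\log s)^{1/2-o(1)}$ in the exponent on the energy side, Bourgain--Chang and P\'{a}lv\"{o}lgyi--Zhelezov give $(\log s)^{1/4}$ and $(\log s)^{1-o(1)}$ respectively on the sumset--product set side, and none of these come anywhere near the conjectured exponent $s - \epsilon$, which remains open even for $s = 2$.
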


Since their influential work, this problem has been thoroughly studied and generalised, and in particular, the $s=2$ case of this conjecture has seen a lot of progress in recent times, with techniques arising from a variety of areas being utilised to tackle this question (see \cite{GS2016}, \cite{So2009}). The current best known result in the $s=2$ case is present in work of Rudnev and Stevens \cite{RS2020}, who showed that 
\[ |2A| + |A^{(2)}| \gg_{\epsilon} |A|^{4/3 + 2/1167 - \epsilon}, \]
whenever $A$ is a finite susbet of $\mathbb{R}$. The cases when $s \geq 3$ are in a more contrasting situation, with very few results studying this problem. One such result arose from the beautiful work of Bourgain--Chang \cite{BC2005}, which, in particular, implies that for any sufficiently large natural number $s$, and for any finite set $A$ of integers, we have
\begin{equation} \label{boc3}
 |s A| + |A^{(s)}| \gg_{s} |A|^{b_{s}}, 
 \end{equation}
where we may choose $b_{s} \gg (\log s)^{1/4}$. 
Since their result, there has only been one other improvement in this setting, namely, the work of P\'{a}lv\"{o}lgyi and Zhelezov \cite{PZ2020}, which allows one to take $b_{s} \gg (\log s)^{1 - o(1)}$. 
\par

We note that more robust notions of additivity and multiplicativity have been analysed in reference to these type of problems, and thus, given a natural number $s$ and finite set $A$ of real numbers, we define the $s$-fold additive energy $E_{s}(A)$ of $A$ to be
 \[ E_{s}(A) = |\{ (a_1, \dots, a_{2s}) \in A^{2s} \ | \ a_1 + \dots + a_{s} = a_{s+1} + \dots + a_{2s} \}|, \]
and the $s$-fold multiplicative energy $M_{s}(A)$ of $A$ to be
\[ M_{s}(A) = |\{ (a_1, \dots, a_{2s}) \in A^{2s} \ | \ a_1 \dots  a_{s} = a_{s+1} \dots a_{2s} \}|. \]
Noting a simple application of the Cauchy-Schwarz inequality, we see that
\begin{equation} \label{csref}
 E_{s}(A) |sA| \geq |A|^{2s}  \ \text{and} \ M_{s}(A) |A^{(s)}| \geq |A|^{2s} , 
 \end{equation}
and so, whenever the sumset or the product set is small, the respective energy must be large. Thus, these energies are concrete measures of additivity and multiplicativity. Similarly, whenever either of $E_{s}(A)$ or $M_{s}(A)$ is small, then $\max\{|sA|, |A^{(s)}|\}$ is large, and so, noting Conjecture $\ref{ersz}$, one might naively expect that given any finite set $A$ of $\mathbb{Z}$, either $E_{s}(A)$ or $M_{s}(A)$ must be small. However, we see that this is not the case, since we may consider the set $A_N = \{1, 2, \dots, N\} \cup \{N^2, \dots, N^N\}$, wherein, we have $E_{s}(A_N), M_{s}(A_N) \gg_{s} |A_N|^{2s-1}.$
 \par
 
Despite this obstruction, we are able to prove the next best alternative, that is, we are able to show that any finite set $A$ of integers may be partitioned into sets $B$ and $C$, such that $B$ has a small additive energy and $C$ has a small multiplicative energy.

\begin{theorem} \label{gemn}
Let $k \geq 1$ be a real number, let $q$ be some even natural number, and write $\Lambda = 6 + 25 \log q$, and $s =2^{5 + (1+120 \cdot 2^{\ceil{600qk\Lambda}}) (\ceil{\log k }+1)}.$  Then for every finite, non-empty set $A$ of integers, there exists pairwise disjoint sets $B,C$ such that $A = B \cup C$ and
\[ E_{q/2}(B) \ll_{k,q} |B|^{q-q/5} \ \text{and} \ M_{s}(C) \ll_{s} |C|^{2s - k}. \]
\end{theorem}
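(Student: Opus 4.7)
The plan is to prove Theorem \ref{gemn} by an iterative greedy extraction, where at each stage we peel off a multiplicatively structured piece from the candidate set that will become $C$, and dump it into the accumulating set that will become $B$. Concretely, initialise $C_0 = A$ and $B_0 = \emptyset$; at stage $i$, if $M_s(C_i) \le |C_i|^{2s-k}$ we halt, taking $C = C_i$ and $B = A \setminus C$. Otherwise the multiplicative energy of $C_i$ is large, so we extract a structured subset $X_i \subseteq C_i$, move it to $B_{i+1} = B_i \cup X_i$, and set $C_{i+1} = C_i \setminus X_i$. The cardinalities $|X_i|$ must be forced to decay geometrically (or fast enough) so that the iteration terminates after a controlled number of steps depending on $k$ and $q$.

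The key extraction step should proceed via a Balog--Szemer\'edi--Gowers theorem adapted to $s$-fold multiplicative energies: the hypothesis $M_s(C_i) > |C_i|^{2s-k}$ produces a large multiplicatively structured $X_i \subseteq C_i$ with small product set, say $|X_i^{(m)}| \ll |X_i|^{1 + \eta}$ for a suitably chosen $m$ and small $\eta = \eta(k,q)$. I would then appeal to the many-fold sum--product theorem of P\'alv\"olgyi--Zhelezov (or the Bourgain--Chang bound of \eqref{boc3}) in its contrapositive form: because $X_i$ has a small $m$-fold product set, its $m$-fold sumset must be enormous, namely $|mX_i| \gg_m |X_i|^{(\log m)^{1-o(1)}}$. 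Choosing the inner parameter $m$ (the quantity that ends up sitting inside the tower in $s = 2^{5 + (1 + 120\cdot 2^{\lceil 600qk\Lambda\rceil})(\lceil\log k\rceil+1)}$) so that $(\log m)^{1-o(1)} \ge 5 \Lambda q k$, the inequality in \eqref{csref} applied to $X_i$ would force $E_{q/2}(X_i) \ll |X_i|^{q - q/5}$ with a comfortable margin.

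The main obstacle, and the step I expect to do most of the work, is propagating the per-piece additive-energy bound $E_{q/2}(X_i) \ll |X_i|^{q-q/5}$ to a bound on $E_{q/2}(B)$, since additive energy is not subadditive under disjoint unions and one must rule out large cross-contributions among the $X_i$. The natural remedy is to dyadically decompose the $q/2$-fold representation function $r_{q/2, B}$ and to track, for each dyadic level $\Delta$, how the level set $L_\Delta \subseteq (q/2)B$ distributes across the tuples $(i_1,\dots,i_{q/2})$ indexing which $X_{i_j}$ each summand comes from. For mixed index tuples, one applies a BSG-type argument to a mixed $q/2$-fold sum--energy, which either violates the termination criterion at some stage (contradicting that $C_i$ had large multiplicative energy) or produces yet another multiplicatively structured subset that, by the above sum--product step, cannot contribute significantly to the level set. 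Iterating this level-set analysis $O_q(1)$ times and summing over the dyadic levels yields the claimed $|B|^{q-q/5}$ bound.

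Finally, bookkeeping the parameters is delicate but mechanical: one first fixes $q$ and the target energy exponent $q/5$, which determines the required sum--product strength, hence the tower parameter $m$ inside the PZ bound and the BSG losses, yielding the explicit expression for $s$. The iteration must halt because either $|C_i|$ drops geometrically, or the extraction forces a contradiction with the measured $M_s$ lower bound; the constant $\Lambda = 6 + 25\log q$ is exactly the BSG/Pl\"unnecke loss one picks up per stage. Provided the tower in $s$ is made as large as stated, every inequality closes with room to spare.
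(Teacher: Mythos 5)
Your overall architecture (greedy extraction of multiplicatively structured pieces, via a many-fold multiplicative BSG step, until the remainder has small $M_s$) matches the paper's. But the central step of your second paragraph is invalid. You extract $X_i$ with $|X_i^{(m)}| \ll |X_i|^{1+\eta}$, invoke the sum--product theorem to conclude $|mX_i|$ is enormous, and then claim that ``the inequality in \eqref{csref} applied to $X_i$ would force $E_{q/2}(X_i) \ll |X_i|^{q-q/5}$.'' Inequality \eqref{csref} reads $E_s(A)\,|sA| \ge |A|^{2s}$: it is a \emph{lower} bound on energy in terms of the sumset, and a large sumset is perfectly consistent with large additive energy (energy can be concentrated on a small subset without affecting $|mX_i|$ much). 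No Cauchy--Schwarz manipulation converts a sumset lower bound into an energy upper bound. The implication ``small product set $\Rightarrow$ small $s$-fold additive energy of the set and all its subsets'' is precisely the deep content of the Bourgain--Chang $\lambda_q$-estimate (Lemma \ref{bourch} in the paper, quantified via P\'alv\"olgyi--Zhelezov), and it cannot be replaced by the sumset-level statement \eqref{boc3}. The paper's Lemma \ref{hop2} uses it as follows: from $|X_i^{(m)}| \ll |A|^{3k}$ with $m = 2^l$, pigeonhole over the chain $|X_i^{(2^{j+1})}|/|X_i^{(2^j)}|$ to find a scale at which $X_i^{(2^j)}$ has multiplicative doubling $\ll |A|^{3k/l}$, apply the $\lambda_q$ bound at that scale, and transfer back to $X_i$ via $g\cdot X_i \subseteq X_i^{(2^j)}$ and affine invariance of additive energy. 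Without this ingredient your argument does not close.

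Your third paragraph also overcomplicates a point that is not actually an obstacle. Additive energy \emph{is} quasi-subadditive under disjoint unions: by H\"older applied to the exponential sums (the paper's Lemmas \ref{yoc2} and \ref{hld3}), $E_{q/2}(D_1 \cup \dots \cup D_r) \le r^{q}\sup_i E_{q/2}(D_i)$, so no dyadic level-set or mixed-tuple BSG analysis is needed. The only thing to control is the number of pieces $r$, and this comes for free from the extraction: each $D_i$ satisfies $|D_i| \gg |A_{i-1}|^{1-82k/U}$ with $U$ huge, so $r \ll |A|^{82k/U}$ and the loss $r^{q} \ll |D_1|^{q/20}$ is absorbed into the exponent (this is why the paper proves $q - q/4$ per piece and only claims $q - q/5$ for the union). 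Relatedly, your termination heuristic is backwards: the pieces do not decay geometrically; rather each piece is \emph{large} relative to what remains, which is what bounds $r$.
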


We can utilise this theorem to generalise the aforementioned result of Bourgain--Chang to the setting of energies, albeit with a weaker power saving.

\begin{Corollary} \label{bta}
Let $s$ be a sufficiently large natural number. Then for every finite set $A$ of integers, there exist disjoint sets $B$ and $C$ such that $A = B \cup C$ and
\[ E_{s}(B) \ll_{s} |B|^{2s - \eta_{s}} \ \text{and} \ M_{s}(C) \ll_{s} |C|^{2s - \eta_{s}}, \]
where $\eta_{s} \geq D(\log \log s)^{1/2}(\log \log \log s)^{-1/2}$, and $D>0$ is some absolute constant.
\end{Corollary}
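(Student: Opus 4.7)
The plan is to apply Theorem \ref{gemn} with $q$ and $k$ chosen so as to make the two savings comparable, and then upgrade the resulting $(q/2)$-fold additive and $s_{\ast}$-fold multiplicative energy bounds (where $s_{\ast}$ denotes the value of $s$ produced by the formula in Theorem \ref{gemn}) to genuine $s$-fold energies via a standard monotonicity. Specifically, for any finite $X \subset \mathbb{Z}$ and integers $1 \leq t \leq r$, a single application of Cauchy--Schwarz to the additive representation function $r_{t}(y) = \#\{(x_{1},\dots,x_{t}) \in X^{t} : x_{1}+\cdots+x_{t}=y\}$ yields $E_{t+1}(X) \leq |X|^{2}\, E_{t}(X)$, and hence by iteration $E_{r}(X) \leq |X|^{2(r-t)}\, E_{t}(X)$. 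The analogous inequality $M_{r}(X) \leq |X|^{2(r-t)}\, M_{t}(X)$ holds whenever $0 \notin X$, by running the same Cauchy--Schwarz argument inside the multiplicative group $\mathbb{Q}^{\times}$. This restriction is harmless here: if $0 \in A$ then Theorem \ref{gemn} (for $k > 2$) must place $0$ into $B$, since otherwise the diagonal contribution $r^{\times}_{s_{\ast}}(0)^{2} \gg |C|^{2s_{\ast}-2}$ would already violate the stated bound on $M_{s_{\ast}}(C)$.

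\textbf{Parameter selection.} Given $s$ sufficiently large, I would choose $k$ to be the largest positive integer and $q$ the smallest even integer with $q \geq 5k$ such that
\[ s_{\ast}(q,k) \ := \ 2^{\,5 \,+\, (1 \,+\, 120 \cdot 2^{\lceil 600\, qk\Lambda \rceil})(\lceil \log k \rceil + 1)} \ \leq \ s, \qquad \Lambda = 6 + 25 \log q. \]
Taking two logarithms of $s_{\ast}$ and using $q \asymp k$, a direct calculation gives $\log \log s_{\ast} \asymp qk \log q \asymp k^{2} \log k$. Solving this relation for the largest $k$ compatible with $s_{\ast}(q,k) \leq s$ produces $k \geq D (\log \log s)^{1/2} (\log \log \log s)^{-1/2}$ for some absolute constant $D > 0$.

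\textbf{Conclusion.} With these $q$ and $k$, Theorem \ref{gemn} yields a disjoint decomposition $A = B \cup C$ satisfying $E_{q/2}(B) \ll_{k,q} |B|^{q-q/5}$ and $M_{s_{\ast}}(C) \ll_{s_{\ast}} |C|^{2s_{\ast}-k}$. Since $s \geq s_{\ast} \geq q/2$, the monotonicity inequalities recalled above upgrade these to $E_{s}(B) \ll_{s} |B|^{2s - q/5}$ and $M_{s}(C) \ll_{s} |C|^{2s - k}$. As $q/5 \geq k$ by construction, both bounds give a common saving of at least $k \geq D (\log \log s)^{1/2} (\log \log \log s)^{-1/2}$, which is the desired lower bound on $\eta_{s}$. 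The only genuinely nontrivial step is the double-logarithm calculation that identifies the size of the maximal admissible $k$; everything else is routine bookkeeping and the two elementary monotonicity estimates.
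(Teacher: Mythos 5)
Your proposal is correct and follows essentially the same route as the paper: apply Theorem \ref{gemn} with $q \asymp k$, upgrade the $(q/2)$-fold additive and $s_{\ast}$-fold multiplicative energy bounds to $s$-fold ones via the monotonicity $E_{s}(X) \leq |X|^{2s-2l}E_{l}(X)$ (Lemma \ref{yoc}) and its multiplicative analogue, and then extract $k \gg (\log\log s)^{1/2}(\log\log\log s)^{-1/2}$ from the relation $\log\log s_{\ast} \asymp qk\log q \asymp k^{2}\log k$. The only cosmetic differences are your explicit handling of $0\in C$ (the paper's construction in Theorem \ref{gemn} already places $0$ into $B$) and the choice $q \geq 5k$ versus the paper's $q = 10\lceil k\rceil$.
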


We remark that while such decompositions have been extensively studied, Theorem $\ref{gemn}$ and Corollary $\ref{bta}$ seem to be the first results which allow the power saving $\eta_{s} \to \infty$ as $s \to \infty$, consequently confirming a speculation of Balog and Wooley \cite{BW2017} in the integer setting. In fact, it was the latter authors who originally studied these so called \emph{low energy decompositions}, and showed that given natural numbers $s_1, s_2 \geq 2$, every finite subset $A$ of $\mathbb{R}$ may be partitioned as $A = B \cup C$, where
\begin{equation} \label{balwo2}
 E_{s_1}(B) \ll |A|^{2s_1 - 1 - \delta} (\log |A|)^{1- \delta} \ \text{and} \ M_{s_2}(C) \ll |A|^{2s_2 - 1 - \delta}(\log |A|)^{1 - \delta}, 
 \end{equation}
with $\delta = 2/33$. Moroever, even though their work has since been further refined by multiple authors, in part due to its close connection to Conjecture $\ref{ersz}$, the best value of $\delta$ that is known to be permissible in $\eqref{balwo2}$ still equals $3/11$ (see \cite[Theorem $1.8$]{Xu2021}).
\par

We further note that these decompositions seem to be much harder to study as compared to the corresponding sumset-product set bounds. Firstly, the latter can be derived from the former using inequalities such as $\eqref{csref}$, and in fact, Corollary $\ref{bta}$ furnishes bounds of the shape $\eqref{boc3}$ in a straightforward manner, albeit with a weaker exponent of the form $b_{s} \gg_{s} (\log \log s)^{1/2 - o(1)}$. Secondly, unlike the sumset-product set case, there is no known analogue of Conjecture $\ref{ersz}$ in the energy setting, in part due to the fact that there exist arbitrarily large subsets $A$ of $\mathbb{N}$ such that for every decomposition of $A$ into disjoint sets $B$ and $C$, and for every choice of $s_1, s_2 \geq 2$, one has either
\[ E_{s_1}(B) \gg |A|^{s_1 + \frac{s_1 - 1}{3}} \ \text{or} \ M_{s_2}(C) \gg |A|^{s_2 + \frac{s_2- 1}{3}}. \]
We refer the reader to \cite{BW2017} for more details regarding this construction.
\par

Our methods can be further utilised to provide a qualitative equivalence between sumset-product set bounds of the shape $\eqref{boc3}$ and low energy decompositions as presented in our results above. In order to state this, we first record some notation, and so, given a set $\mathcal{Z}$ of real numbers, a natural number $m$ and a real number $b \geq 1$, we denote $(\mathcal{Z}, m,b)$ to be a \emph{good tuple} if for every non-empty, finite subset $A$ of $\mathcal{Z}$, we have
\begin{equation} \label{hypco}
 |m A | + |A^{(m)}| \gg_{m}  |A|^{b}.
 \end{equation}

\begin{theorem} \label{eric}
Let $(\mathcal{Z},m,b)$ be a good tuple, and let $k, s_1, s_2$ satisfy $k = b/30$ and $s_1 = 2^{5 + (1 + 500\ceil{k} s_2)(\ceil{\log k} + 1)}$ and $s_2 = 2^{5 + (1 + 120m)(\ceil{\log k} + 1)}.$ Then, for every finite, non-empty subset $A$ of $\mathcal{Z}$, there exist disjoint sets $B,C$ such that $A = B \cup C$ and
\[ E_{s_1}(B) \ll_{k,m} |B|^{2s_1 - k } \ \text{and} \ M_{s_2}(C) \ll_{k,m} |C|^{2s_2 - k+1}. \]
\end{theorem}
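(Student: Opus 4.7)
The plan is to adapt the proof architecture of Theorem~\ref{gemn} to the abstract setting of good tuples, substituting the hypothesis \eqref{hypco} for whichever ambient sum--product input is implicitly used there. I would argue by contradiction: assume no decomposition $A = B \sqcup C$ satisfies both $E_{s_1}(B) \ll_{k,m} |B|^{2s_1 - k}$ and $M_{s_2}(C) \ll_{k,m} |C|^{2s_2 - k + 1}$, and aim to produce a refinement $D^{*} \subseteq A$ on which the good tuple bound $|m D^{*}| + |(D^{*})^{(m)}| \gg_{m} |D^{*}|^{b}$ is violated.

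First, I would run a Balog--Wooley-style iterative extraction: peel off subsets of $A$ of small $s_1$-fold additive energy into $B$ until the residual set $C$ has the property that no sufficiently dense subset of it admits a comparably small $s_1$-fold additive energy. By the contradiction hypothesis, the resulting $C$ then satisfies $M_{s_2}(C) \gg |C|^{2s_2 - k + 1}$. A density pigeon-holing argument on the iteratively defined $C$ produces a single subset $D \subseteq C$, of size comparable to $|C|$, on which both $E_{s_1}(D) \gg |D|^{2s_1 - k}$ and $M_{s_2}(D) \gg |D|^{2s_2 - k + 1}$ hold simultaneously.

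Next, I would translate these extremal energies into small sumsets and product sets via a many-fold Balog--Szemer\'edi--Gowers-type refinement in each register: the large $s_1$-fold additive energy of $D$ yields $D_1 \subseteq D$, of size comparable to $|D|$, with $|s_1 D_1| / |D_1|$ polynomially controlled, and the large $s_2$-fold multiplicative energy yields $D_2$ with $|D_2^{(s_2)}| / |D_2|$ similarly controlled. A dyadic pigeon-holing on a common ambient dilate then produces a common refinement $D^{*} \subseteq D$ of near-full density on which \emph{both} $|s_1 D^{*}|$ and $|(D^{*})^{(s_2)}|$ are polynomially close to $|D^{*}|$. The reason $s_1$ is taken exponentially large in $s_2 \lceil k \rceil$ is precisely to ensure that these compounding losses do not eat into the power saving $k$, and this is the quantitative core of the argument.

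Finally, a sequence of Pl\"unnecke--Ruzsa estimates (additively, and multiplicatively for the product set side), exploiting $m \ll s_2 \ll s_1$, reduces the above bounds to $|m D^{*}| \ll |D^{*}|^{1 + \epsilon}$ and $|(D^{*})^{(m)}| \ll |D^{*}|^{1 + \epsilon}$ for some $\epsilon$ strictly less than $b - 1 = 30k - 1$; comparing with the good tuple bound forces a contradiction once $|A|$ (hence $|D^{*}|$) is sufficiently large. The principal obstacle I foresee is the common-refinement step producing $D^{*}$: carrying both the additive and multiplicative structure down to a single set in a quantitatively controlled way requires a careful multi-scale argument, and the elaborate tower formula for $s_1$ in terms of $s_2$, $m$ and $k$ encodes exactly the cumulative losses incurred along the way.
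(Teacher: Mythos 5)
Your overall architecture --- iterative extraction, BSG-type refinement of large energies into small iterated sum and product sets, and a contradiction against the good-tuple bound --- is in the right spirit, but two of your steps fail as stated, and the second is exactly the point the paper's argument is built to avoid. First, the pigeonholing that is supposed to produce a single dense $D \subseteq C$ with \emph{both} $E_{s_1}(D) \gg |D|^{2s_1-k}$ and $M_{s_2}(D) \gg |D|^{2s_2-k+1}$ is not valid: energies do not localize jointly to a common subset. The set $\{1,\dots,N\} \cup \{N^2,\dots,N^N\}$ from the introduction has both energies near-maximal while no dense subset carries both, so ``both energies of $C$ are large'' cannot be upgraded by counting to ``both energies of some dense $D$ are large.'' (It is also unclear what mechanism lets you peel off small-additive-energy pieces in the first place, since the only input available is the good-tuple bound, which requires multiplicative information you do not yet have.) Second, even granting such a $D$, the common-refinement step collapses: Proposition \ref{kp}, iterated as in Lemma \ref{ntm2}, returns subsets of size only $\gg |D|^{1-82\delta}$, a genuine power loss rather than positive density, so the additively structured $D_1$ and the multiplicatively structured $D_2$ may well be disjoint, and no dyadic pigeonholing manufactures a common $D^{*}$ on which both the sumset and the product set are small.

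The paper circumvents both problems by never attempting to hold two large energies on a common set; instead it \emph{nests} the two refinements and exploits that iterated sumsets, unlike energies, are monotone under passing to subsets. Concretely (Lemma \ref{thsam}): if $E_{s_1}(A) \geq |A|^{2s_1-k}$, the additive form of Lemma \ref{ntm2} gives $A_1 \subseteq A$ with $|mA_1| \ll_m |A|^{3k}$; if moreover $M_{s_2}(A_1) \geq |A_1|^{2s_2-k}$, the multiplicative form applied \emph{to $A_1$ itself} gives $A_2 \subseteq A_1$ with $|A_2^{(m)}| \ll_m |A_1|^{3k}$, and then $|mA_2| \leq |mA_1|$ holds automatically, so $|mA_2| + |A_2^{(m)}| \ll_{k,m} |A_2|^{27k} = |A_2|^{9b/10}$, contradicting \eqref{hypco} unless $|A| \ll_{k,m} 1$. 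Thus whenever the current residual set has large $s_1$-fold additive energy, one peels off a dense subset of small $s_2$-fold \emph{multiplicative} energy --- note the direction is opposite to yours: the extracted pieces assemble into the multiplicatively good set $C$ and the final residue is the additively good set $B$ --- and Lemmas \ref{com2} and \ref{hld3} control the number of steps and the energy of the union, which is where the loss from $2s_2-k$ to $2s_2-k+1$ arises. If you wish to salvage your outline, replace the common-refinement step by this nesting.
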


Thus, Theorem $\ref{eric}$, when combined with $\eqref{boc3}$, is able to deliver quantitatively weaker versions of Theorem $\ref{gemn}$ and Corollary $\ref{bta}$ in a straightforward manner. We end our discussion on this class of results by mentioning that we can use the ideas present in this paper to derive further arithmetic information concerning the sets $B$ and $C$ arising in the conclusion of Corollary $\ref{bta}$, see, for example, Theorem $\ref{zidt}$. We discuss this, along with other applications of our method in \S2. 
\par

We now move to another topic of interest in arithmetic combinatorics, that is, the study of additive properties of convex sets. Thus, given any finite subset $I$ of real numbers, and a function $f : \mathbb{R} \to \mathbb{R}$, we let $f(I) = \{ f(i) \ | \ i \in I\}$. With this definition in hand, we are interested in studying the sumsets $2I$ and $2f(I)$, when $f : \mathbb{R} \to \mathbb{R}$ is a strictly convex function. Writing $I_{N} = \{1, \dots, N\}$, this problem was first studied by Erd\H{o}s \cite{Er1977} in the case when $I= I_N$, who formalised the heuristic that convexity should perturb additive structure, at least to some degree, and presented the following conjecture. 

\begin{Conjecture} \label{ersum}
Let $\epsilon>0$ and let $f : \mathbb{R} \to \mathbb{R}$ be a strictly convex function. Then 
\[ |f(I_N) + f(I_N)| \gg_{\epsilon} N^{2 - \epsilon}. \]
\end{Conjecture}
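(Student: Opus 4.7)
The standard reduction is to bound $|f(I_N)+f(I_N)|$ from below via Cauchy--Schwarz in terms of an additive energy. A strictly convex function is injective on each of the two (possibly empty) monotonic pieces of its domain, so after passing to a subset of size $\gg N$ we may assume $|f(I_N)| = N$. Then
\[ |f(I_N) + f(I_N)| \geq \frac{N^4}{E_2(f(I_N))}, \]
and Conjecture \ref{ersum} reduces to proving the essentially sharp bound $E_2(f(I_N)) \ll_{\epsilon} N^{2+\epsilon}$ on the number of quadruples $(i,j,k,\ell) \in I_N^4$ satisfying $f(i)+f(j) = f(k)+f(\ell)$.

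The natural attack on this latter energy estimate is incidence-geometric: Solymosi's argument interprets each such solution as an incidence between a point of $I_N \times I_N$ and a translate of the graph $y = f(t)$, and then the Szemer\'edi--Trotter theorem yields $E_2(f(I_N)) \ll N^{5/2}$ and hence only $|f(I_N)+f(I_N)| \gg N^{3/2}$. A sequence of third- and higher-moment bootstraps, notably by Schoen--Shkredov and Konyagin--Shkredov, shaves the exponent $5/2$ by a small absolute constant, but the gap to the target exponent $2$ remains polynomial. A more ambitious plan, in the spirit of this paper, would be to combine these classical tools with the $s$-fold convex energy estimates and low-energy decompositions developed here: one would try to show that $E_s(f(I_N))$ lies close to the trivial lower bound $N^{s + o(1)}$ for some very large $s$, and then propagate this saving down to $s = 2$, either through iterated convolution inequalities or through an analogue of Theorem \ref{gemn} adapted to the convex setting.

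The hard part, and the reason Conjecture \ref{ersum} has remained open since 1977, is that no such transfer from higher to lower additive energies is currently known: the second energy of a set can be polynomially larger than the trivial bound even when all of its higher-order energies are essentially extremal, so sharp bounds on $E_s(A)$ do not by themselves force a sharp bound on $E_2(A)$. Theorem \ref{gemn} has the same defect, as it only guarantees a good bound on $E_{q/2}$ of one piece of the decomposition while leaving $E_2$ of the complementary piece essentially uncontrolled. The missing ingredient appears to be either an incidence theorem sensitive to strict convexity at all scales simultaneously, or a structural description of those sets which saturate the Szemer\'edi--Trotter bound on a convex curve; absent such an input, the methods of the present paper seem able to push the exponent in $|f(I_N)+f(I_N)|$ slightly above $3/2$, but nowhere close to the full quadratic target of Conjecture \ref{ersum}.
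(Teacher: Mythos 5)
This statement is Erd\H{o}s's conjecture from 1977, which the paper records as an open problem (Conjecture \ref{ersum}) and does not prove; the best results it cites are the $|I|^{3/2}K_2^{-1}$ bound of Elekes--Nathanson--Ruzsa and the $|I|^{30/19-o(1)}K_2^{-1}$ bound of Stevens--Warren. You correctly identify the statement as open rather than fabricating a proof, and your account of the Cauchy--Schwarz reduction to $E_2$, the Szemer\'edi--Trotter barrier at exponent $3/2$, and the obstruction to transferring higher-energy savings down to $E_2$ is consistent with the paper's discussion.
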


Here, for all finite subsets $A,B$ of $\mathbb{R}$, we define 
$A*B = \{ a * b \ | \ a \in A, b \in A \}$, where the operation $* \in \{+,-, \cdot\}$. Furthermore, defining the set $\mathcal{N}_k = \{1, 2^k, \dots, N^k\}$ for every $k \in \mathbb{N}$, we may use the classical estimate $|\mathcal{N}_2 + \mathcal{N}_2| \ll N^{2 - o(1)}$ (see \cite{La1908}) to discern that Conjecture $\ref{ersum}$ is expected to be sharp. This has also been analysed in the more general case of $I$ being an arbitrary finite subset of $\mathbb{R}$, wherein, Elekes, Nathanson and Ruzsa \cite{ENR2000} used incidence geometric methods to prove that
\begin{equation} \label{elr}
 |f(I)+f(I)| \gg |I|^{3/2} K_2^{-1}, 
 \end{equation}
 with $K_2 = |2I|/|I|$. We see that choosing $f(x) = \log x$ gives sum-product type estimates, while setting $I=I_N$ delivers bounds for sumsets of convex sets. As in the case of Conjecture $\ref{ersz}$, much work has been done recently on refining such bounds, culminating in the work of Stevens and Warren \cite{SW2021}, who showed that
 \[ |f(I) + f(I)| \gg |I|^{30/19 -o(1)} K_2^{-1}, \]
 in process, recovering the best known result known towards Conjecture $\ref{ersum}$.
 \par

Furthermore, various authors have studied generalisations of estimates of the above kind, by imposing higher levels of convexity on the function $f$ (see, for instance, \cite{HRR2020}, \cite{BHR2021}, \cite{Ol2020}, \cite{Sh2020}). More precisely, given some interval $\mathcal{I} \subseteq \mathbb{R}$ and natural number $k \geq 2$, we write $f$ to be $(k-1)$-convex on $\mathcal{I}$ if all the derivatives $f^{(1)}, \dots, f^{(k)}$ exist and are non-vanishing on $\mathcal{I}$. Furthermore, given a non-empty, finite subset $I$ of $\mathcal{I}$, we write $K = |2I-I|/|I|$ and $A = f(I)$. 
\par

In this setting, Hanson, Roche-Newton and Rudnev \cite{HRR2020} proved that
\begin{equation} \label{hrnr}
 |2^{k-1} A - (2^{k-1}-1) A| \gg_{k}  |A|^{k}K^{-2^k + k +1} (\log |A|)^{-O_{k}(1)},
 \end{equation}
a bound that can be interpreted as a higher convexity version of $\eqref{elr}$. Moreover, this is sharp up to some multiplicative constant, since the function $f(x) = x^k$ can be seen to be $(k-1)$-convex, whence, we can choose $I = I_N$ and $A = \mathcal{N}_k$, and use the observation that $|m \mathcal{N}_k - n \mathcal{N}_k| \ll_{k,m,n} N^k,$ for each $m,n \in \mathbb{N}$. 
\par

Noting the above estimates, it is natural to ask whether an energy variant of such a bound can hold, and this is precisely the content of our next result.

\begin{theorem} \label{rtp}
Let $k \geq 2$ be a natural number, let $\mathcal{I} \subseteq \mathbb{R}$ be an interval and let $f: \mathcal{I} \to \mathbb{R}$ be $(k-1)$-convex on $\mathcal{I}$. Then, for every $s \in \mathbb{N}$ and every finite $I \subseteq \mathcal{I}$, writing $A = f(I)$ and $K = |2I-I|/|I|$, we have
\[ E_{s}(A) \ll_{s,k}  |A|^{2s - k + (4k-4) s^{-\eta_k}}+ K^{c_{s,k}}, \]
where $\eta_k \in (0,1)$ and $c_{s,k} >0$ are absolute constants.  
\end{theorem}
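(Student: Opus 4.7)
The plan is to deploy an iterative scheme based on the higher convexity sumset bound \eqref{hrnr} of Hanson, Roche-Newton and Rudnev, in the spirit of the Bourgain--Chang strategy that underpins Theorem \ref{gemn}. The target is to prove the estimate inductively in $s$, with each step shrinking the deficit between $E_s(A)$ and the conjecturally sharp $|A|^{2s-k}$ by a constant factor, so that after $\Theta(\log s)$ steps the deficit reduces to the rate $s^{-\eta_k}$.

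For the base case, I would combine \eqref{hrnr} with Cauchy--Schwarz in the form \eqref{csref} to obtain a direct energy estimate of the shape $E_{t_0}(A) \ll_k |A|^{2t_0 - k + \delta_k} K^{c_{t_0,k}}$ for some explicit $t_0 = t_0(k)$ and $\delta_k > 0$, after passing through Pl\"{u}nnecke--Ruzsa to trade the iterated sumset $2^{k-1}A - (2^{k-1}-1)A$ for a more conventional $s$-fold sumset quantity. This provides the initial rung of the induction.

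For the inductive step, fix $s$ and assume the desired bound for all smaller indices. Decompose $E_s(A) \asymp \Delta^2 |P_\Delta|$ via the dyadic pigeonhole on popular level sets $P_\Delta = \{x \in sA : r_{sA}(x) \asymp \Delta\}$; if $E_s(A)$ exceeds the target, then both $\Delta$ and $|P_\Delta|$ are forced to be large. A Balog--Szemer\'edi--Gowers style refinement then extracts a subset $A' \subseteq A$ of density $\gs 1/\mathrm{poly}(\log|A|)$ whose $s$-fold sumset is considerably smaller than $|A'|^{s}$, and Pl\"{u}nnecke--Ruzsa expands this into an upper bound on $|2^{k-1}A'-(2^{k-1}-1)A'|$. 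But \eqref{hrnr} forces this quantity to be at least $\gg_k |A'|^{k} K^{-(2^k-k-1)}(\log|A'|)^{-O_k(1)}$, which collides with the previous upper bound unless $E_s(A)$ already respects the claimed estimate. Iterating this gain geometrically over $\log s$ steps yields the rate $s^{-\eta_k}$, with $\eta_k \in (0,1)$ governed by the ratio between the HRR gain per step and the unavoidable Pl\"{u}nnecke--Ruzsa losses, hence depending only on $k$.

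The main obstacle is the bookkeeping of $K$-losses as the iteration proceeds: each passage through Pl\"{u}nnecke--Ruzsa inflates the exponent of $K$ in the intermediate estimates, and one must ensure that these losses never contaminate the main $|A|^{2s-k+\mathrm{small}}$ term but instead remain absorbed into the additive $K^{c_{s,k}}$ term of the statement. The cleanest organisation is a dichotomy: either $K$ is small enough that the cascade runs uniformly, in which case the induction closes, or $K$ is so large that $K^{c_{s,k}}$ already exceeds the trivial bound $E_s(A) \leq |A|^{2s}$, in which case the conclusion is immediate. A secondary technical nuisance is absorbing the $(\log|A|)^{O_k(1)}$ factors produced by \eqref{hrnr} and by the pigeonhole step, but these cost only a small further reduction of $\eta_k$.
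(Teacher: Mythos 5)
Your proposal is in essence the paper's argument: the paper also runs a halving iteration in $s$ (packaged as Lemma \ref{thrt} and run downward, from $E_s$ to the trivial bound at $E_{s/2^{r}}$, rather than upward from a base case), and at each step the collision is exactly the one you describe — a generalised Balog--Szemer\'edi--Gowers extraction (Proposition \ref{kp}) produces $A'\subseteq A$ with $|mA'-nA'|\ll_{m,n}|A|^{k-\Lambda+240(m+n)\delta}$, which contradicts the Hanson--Roche-Newton--Rudnev lower bound (Lemma \ref{hrr5}) unless $|A|\ll_{s,k}K^{O_{s,k}(1)}$. The geometric accumulation of the per-step gain $(1+T_k^{-1})$ over $\sim\log s$ halvings is what yields the rate $s^{-\eta_k}$, and the $K$-dependence is quarantined into a separate additive term exactly as in your dichotomy.

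Two points need repair. First, the base case cannot be obtained from \eqref{hrnr} via \eqref{csref}: Cauchy--Schwarz converts sumset \emph{upper} bounds into energy \emph{lower} bounds, so a lower bound on $|2^{k-1}A-(2^{k-1}-1)A|$ gives no upper bound on $E_{t_0}(A)$. This is harmless, since the trivial bound $E_t(A)\leq|A|^{2t-1}$ is all that is used at the bottom of the iteration, and it suffices precisely because the surplus exponent grows geometrically as one descends. Second, and more substantively, a large $E_s(A)$ alone does not permit the extraction of a subset $A'$ of $A$ itself with controlled many-fold sumsets: the $s$-fold energy could be inflated by concentration already at the $(s/2)$-fold level. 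One needs the accompanying upper bound $E_{s/2}(A)\leq|A|^{s-\nu+\delta}$, which is exactly why Proposition \ref{kp} is stated as a dichotomy. In your upward induction this input is available from the inductive hypothesis at level $s/2$, but it must actually be invoked there; as written, your inductive step asserts the extraction from the size of $E_s(A)$ alone, which is false. A final small omission: for $s$ not a power of two one passes through the nearest power of two using $E_{s}(A)\leq|A|^{2s-2l}E_{l}(A)$ from Lemma \ref{yoc}, which accounts for the factor of two between $2k-2$ and $4k-4$ in the exponent.
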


As before, we note that Theorem $\ref{rtp}$ is sharp up to a factor of $|A|^{(4k-4) s^{-\eta_k}}$, where the latter becomes arbitrarily small as $s$ becomes appropriately large. In order to see this, we can set $f(x) = x^k$ and $I= I_N$ in Theorem $\ref{rtp}$ to produce the bound
\[  E_{s}(\mathcal{N}_k) \ll_{s,k} N^{2s - k + (4k-4) s^{-\eta_k}},\]
whereupon, we may use the following elementary estimate to confirm our claim 
\begin{equation} \label{war2}
E_{s}(\mathcal{N}_k)  \geq N^{2s} |s \mathcal{N}_k|^{-1} \gg_{s,k} N^{2s-k}.
\end{equation}
\par

Furthermore, while we have not carefully optimised our arguments to obtain the best possible value of $\eta_k$ in Theorem $\ref{rtp}$, one could take $2^{-k} \ll \eta_k\ll 2^{-k}$. Thus, there exists $C>0$, such that for any $\kappa \geq 1$,  whenever $s \geq 2^{ C2^k \log (k\kappa)}$, we have $(4k-4)s^{-\eta_k} \leq 1/\kappa$, that is, 
\[ E_{s}(A) \ll_{s,k}  |A|^{2s - k + 1/\kappa} + K^{c_{s,k}}. \]
This can be compared with the recent work of Bradshaw, Hanson and Rudnev \cite[Theorem $1.4$]{BHR2021}, who, under the hypothesis of Theorem $\ref{rtp}$,  showed that
\begin{equation} \label{devt}
 E_{s}(A) \ll_{s,k} K^{2^k - 2k + 2\alpha_k} |A|^{2s - k + \alpha_k} , 
 \end{equation}
 where $s \geq 2^{k-1}$ and $\alpha_k = \sum_{j=1}^{k-1} j2^{-j}$. Since $1/2 \leq \alpha_k \leq 2$ for every $k \geq 2$, we see that $\eqref{devt}$ misses the lower bound in $\eqref{war2}$ by a factor of at least $|A|^{1/2}$, and so, our result provides sharper upper bounds when $s \geq 2^{C2^{k} \log k}$ and $K \ll |A|^{O_{s,k}(1)}$. On the other hand, we note that their result provides non-trivial bounds in a much larger regime than ours, that is, when $s \geq 2^{k-1}$, as well as that $\eqref{devt}$ exhibits a better dependence on the parameter $K$.
 \par
 
Our method also provides almost sharp bounds for additive energies  between multiple $(k-1)$-convex sets, see, for instance, Corollary $\ref{rtp2}$. Moreover, a key ingredient in the proofs of all of our aforementioned results is an inverse theorem that converts information on $s$-fold energies into bounds for many-fold sumsets. This is the content of Proposition $\ref{kp}$, which we present in \S2.
\par

We finish this section by providing a brief outline of our paper. We use \S2 to discuss some further applications of our method. In \S3, we record some preliminary lemmata that we will use frequently through the paper. We then employ \S4 to present the general set up for procuring our low energy decompositions. In \S5, we will use these ideas, in conjunction with the aforementioned work of Bourgain--Chang, to prove Theorem $\ref{gemn}$ and related results, such as Corollary $\ref{bta}$ and Theorem $\ref{zidt}$. Similarly, we utilise \S6 to proving Theorem $\ref{eric}$. We turn to the proof of Theorem $\ref{rtp}$ in \S7, and this will involve combining iterative applications of Proposition $\ref{kp}$ with sumset estimates of the form $\eqref{hrnr}$. Finally, we conclude this paper by recording the proof of Proposition $\ref{kp}$ in \S8.
 
\textbf{Notation.} In this paper, we use Vinogradov notation, that is, we write $X \gg_{z} Y$, or equivalently $Y \ll_{z} X$, to mean $|X| \geq C_{z} |Y|$ where $C$ is some positive constant depending on the parameter $z$. Moreover, for every $\theta \in \mathbb{R}$, we use $e(\theta)$ to denote $e^{2\pi i \theta}$, and for every non-empty, finite set $Z$, we use $|Z|$ to denote the cardinality of $Z$. 

\textbf{Acknowledgements.}  The author is grateful for support and hospitality from University of Bristol and Purdue University. The author would like to thank Trevor Wooley for his guidance and encouragement.



\section{Further applications and discussion}

We commence this section by presenting versions of results from \S1 that study mixed energies. One of the motivations behind this is to show that the sets $B,C$ arising from our decomposition in Corollary $\ref{bta}$ not only have low additive and multiplicative energies respectively, but also that this decomposition has a doubly orthogonal flavour, that is, $B$ and $C$ have a small mutual additive and multiplicative energy. We begin this endeavour by supplying some suitable notation, and thus, given any natural number $s$ and any finite subsets $A_1, \dots, A_{2s}$ of $\mathbb{R}$, we define
\[ E_{s}(A_1, \dots ,A_{2s}) = |\{ (a_1, \dots, a_{2s}) \in A_1 \times \dots \times A_{2s} \ | \ a_1 + \dots + a_{s} = a_{s+1} + \dots + a_{2s}\}|, \]
and
\[ M_{s}(A_1, \dots ,A_{2s}) = |\{ (a_1, \dots, a_{2s}) \in A_1 \times \dots \times A_{2s} \ | \ a_1  \dots  a_{s} = a_{s+1} \dots a_{2s}\}|. \]
Next, for all finite sets $B,C$ of real numbers, writing $B_{i} = B$ for every $1 \leq i \leq s$ and $C_i = C$ for every $1 \leq i \leq s$, we define
\[ E_{s}(B,C) = E_{s}(B_1, \dots, B_{s}, C_{1}, \dots, C_{s}) \ \text{and} \ M_{s}(B,C) = M_{s}(B_1, \dots, B_{s}, C_{1}, \dots, C_{s}). \]
We can interpret $E_{s}(B,C)$ and $M_{s}(B,C)$ as measures of additive and multiplicative interactions between the sets $B$ and $C$. 

\begin{theorem} \label{zidt}
Let $s \in \mathbb{N}$ be sufficiently large, and let $A \subseteq \mathbb{Z}$ be a finite set. Then there exist disjoint sets $B,C$ such that $A = B \cup C$, and 
\[ \max\{ E_{s}(B), M_{s}(C), E_{s}(B,C), M_{s}(B,C)\} \ll_{s} |A|^{2s - \gamma_{s}}, \]
where $\gamma_{s} \gg (\log \log s)^{1/2} (\log \log \log s)^{-1/2}$. Moreover, if $0 \notin A$, then for every $A_1, \dots, A_{2s} \in \{ B,C\}$, we either have	
\begin{equation} \label{blueoc}
 E_{s}(A_1, \dots, A_{2s}) \ll_{s} |A|^{2s - \gamma_{s}} \ \text{or} \ M_{s}(A_1, \dots, A_{2s}) \ll_{s} |A|^{2s - \gamma_{s}}. 
 \end{equation}
\end{theorem}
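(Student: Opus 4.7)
The plan is to derive Theorem~\ref{zidt} from Corollary~\ref{bta} by combining the decomposition it produces with a standard H\"older-in-Fourier bound for mixed energies. I would first apply Corollary~\ref{bta} to $A$, yielding a disjoint partition $A = B \cup C$ with
\[ E_s(B), \, M_s(C) \ll_s |A|^{2s - \eta_s}, \]
where $\eta_s \gg (\log\log s)^{1/2}(\log\log\log s)^{-1/2}$. For $s$ large enough that $\eta_s > 3$, say, one has $0 \notin C$: indeed, $0 \in C$ would force $M_s(C) \geq |C|^{2s-2}$ by setting $c_1 = c_{s+1} = 0$ and letting the other coordinates range freely, contradicting Corollary~\ref{bta} when $|C|$ is sufficiently large, while the case of bounded $|A|$ is trivial.

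The workhorse estimate is the H\"older inequality for mixed additive energy,
\[ E_s(A_1, \dots, A_{2s}) \leq \prod_{i=1}^{2s} E_s(A_i)^{1/(2s)}, \]
obtained by writing $E_s(A_1, \dots, A_{2s}) = \int_0^1 \prod_{i=1}^s f_{A_i}(\alpha) \prod_{i=s+1}^{2s} \overline{f_{A_i}(\alpha)}\, d\alpha$ with $f_A(\alpha) = \sum_{a \in A} e(a\alpha)$ and applying H\"older with exponent $2s$ in each factor, noting that $\int_0^1 |f_{A_i}(\alpha)|^{2s}\, d\alpha = E_s(A_i)$. Provided $0 \notin A_i$ for every $i$, an analogous bound $M_s(A_1, \dots, A_{2s}) \leq \prod_i M_s(A_i)^{1/(2s)}$ holds, either via Fourier analysis on the multiplicative group $(\mathbb{Q}^{*}, \cdot)$ or, more concretely, by taking logarithms of absolute values (with a constant-factor overhead for tracking signs) and reducing to the additive case.

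For the first assertion, $E_s(B)$ and $M_s(C)$ are controlled directly by Corollary~\ref{bta}, while for the mixed terms I would combine the H\"older bounds with the trivial estimates $E_s(X) \leq |X|^{2s-1}$ and, for $X$ not containing zero, $M_s(X) \leq |X|^{2s-1}$. For instance,
\[ E_s(B, C) \leq E_s(B)^{1/2} E_s(C)^{1/2} \ll_s |A|^{(2s-\eta_s)/2} |A|^{(2s-1)/2} = |A|^{2s - (\eta_s + 1)/2}, \]
and the argument for $M_s(B, C)$ is identical once one observes that $0 \notin C$ forces any contributing tuple to have all $B$-entries nonzero, so that $M_s(B, C) = M_s(B \setminus \{0\}, C)$ and the multiplicative H\"older applies. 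This yields the first claim with $\gamma_s = (\eta_s + 1)/2$.

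For the second assertion, assuming $0 \notin A$, I would fix any tuple $(A_1, \dots, A_{2s}) \in \{B, C\}^{2s}$ in which $B$ occurs $j$ times and $C$ occurs $2s - j$ times, and apply the two H\"older inequalities above together with Corollary~\ref{bta} and the trivial energy bounds to obtain
\[ E_s(A_1, \dots, A_{2s}) \ll_s |A|^{2s - (j\eta_s + 2s - j)/(2s)}, \quad M_s(A_1, \dots, A_{2s}) \ll_s |A|^{2s - ((2s - j)\eta_s + j)/(2s)}. \]
The larger of the two savings is minimised at $j = s$, where both equal $(\eta_s + 1)/2$, which is precisely the required $\gamma_s$. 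The only mildly delicate piece of bookkeeping is the verification of the multiplicative H\"older inequality in the presence of possible sign changes, together with the careful treatment of zero entries above; beyond Corollary~\ref{bta}, no new combinatorial input is needed.
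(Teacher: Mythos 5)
Your proposal follows essentially the same route as the paper: apply Corollary \ref{bta} to obtain the partition $A = B \cup C$, control the mixed energies by H\"older-type inequalities (Lemmas \ref{yoc2} and \ref{mlpain}) together with the trivial bounds $E_{s}(X), M_{s}(X) \ll_{s} |X|^{2s-1}$, and for the second assertion note that any tuple in $\{B,C\}^{2s}$ contains at least $s$ copies of one of the two sets; your optimisation over $j$ is exactly this observation, and correctly yields $\gamma_{s} = (\eta_{s}+1)/2$.

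The one point where you diverge, and where your write-up has a genuine (though easily repaired) gap, is the treatment of $M_{s}(B,C)$ in the first assertion, where $0 \in A$ is permitted. The paper writes $M_{s}(B,C) = \sum_{n} q_{s}(B;n)\,q_{s}(C;n)$ and applies Cauchy--Schwarz directly to get $M_{s}(B,C) \le M_{s}(B)^{1/2} M_{s}(C)^{1/2}$ with no hypothesis on zero; your route through the multiplicative H\"older inequality forces you to argue that $0 \notin C$. That argument does not quite close: $0 \in C$ combined with $M_{s}(C) \ll_{s} |C|^{2s-\eta_{s}}$ only forces $|C| \ll_{s} 1$, not $|A| \ll_{s} 1$, so the leftover case is a bounded set $C$ (containing $0$) sitting inside an arbitrarily large $A$, which is not covered by ``the case of bounded $|A|$ is trivial.'' It is still easy --- for instance $M_{s}(B,C) \le |B|^{s}|C|^{s} \ll_{s} |A|^{s}$ when $|C| \ll_{s} 1$ --- but it needs to be said; alternatively, switching to the Cauchy--Schwarz formulation on the representation functions removes the zero issue entirely. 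With that one-line patch the proof is complete.
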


Thus, the sets $B$ and $C$ prescribed by Theorem $\ref{zidt}$ not only have small additive and multiplicative energies respectively, but there is also low arithmetic interaction between the two sets. Moreover, it is interesting to note that in the case of $\eqref{blueoc}$, we must restrict to the case when $0 \notin A$, but we will show that this condition is, in fact, necessary.
\par

Writing $A= \{0,1, 2, \dots, N\}$, we suppose that $A = B \cup C$ for some disjoint sets $B,C$, and without loss of generality, we may further assume that $|B| \geq N/2$. We now set $A_{i} = A_{i+s} = B$ for every $2 \leq i \leq s$, and if $0 \in B$, we set $A_1 = A_{s+1} = B$, otherwise we set $A_1 = A_{s+1} = C$. In either case, applying the Cauchy-Schwarz inequality gives us
\[ E_{s}(A_1, \dots, A_{2s}) \geq E_{s-1}(B) \geq  | B|^{2s-2} |sB|^{-1} \gg_{s} N^{2s-3}, \]
while
\[ M_{s}(A_1, \dots, A_{2s}) \geq |B|^{2s-2} \gg_{s} N^{2s-2}, \]
since $0 \cdot b_2 \dots b_{s} = 0 \cdot b_{s+1} \dots b_{2s}$ for every $b_2, \dots b_{s}, b_{s+2}, \dots, b_{2s} \in B$. This confirms our claim.
\par


As previously mentioned, we can also generalise Theorem $\ref{rtp}$ to the case when we consider additive interactions between multiple $(k-1)$-convex sets.  

\begin{Corollary} \label{rtp2}
Let $k,s \geq 2$ be natural numbers. Morevoer, for every $1 \leq i \leq 2s$, let $\mathcal{I}_i \subseteq \mathbb{R}$ be an interval, let $f_i: \mathcal{I}_i \to \mathbb{R}$ be $(k-1)$-convex, and let $I_i \subseteq \mathcal{I}_i$ be a finite set, with $A_i = f_i(I_i)$ and $K_i = |2I_i-I_i|/|I_i|$. Then, we have
\[ E_{s}(A_1, \dots, A_{2s}) \ll_{s,k} (K_1\dots K_{2s})^{O_{s,k}(1)} (|A_1|^{\frac{1}{2s}} \dots |A_{2s}|^{\frac{1}{2s}})^{2s - k + (4k-4) s^{-\eta_k}}, \]
where $\eta_k \in (0,1)$.
\end{Corollary}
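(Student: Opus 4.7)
The plan is to reduce Corollary \ref{rtp2} to the single-set result Theorem \ref{rtp} via a standard multilinear energy inequality. The geometric-mean form $(|A_1|^{1/(2s)} \cdots |A_{2s}|^{1/(2s)})^{\cdots}$ appearing in the target is precisely what one extracts from a symmetric H\"older estimate applied to $2s$ Fourier factors, so this reduction is both natural and more-or-less forced.

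First I would establish the inequality
\[ E_s(A_1, \dots, A_{2s}) \leq \prod_{i=1}^{2s} E_s(A_i)^{1/(2s)}. \]
Writing $\phi_i(\xi) = \sum_{a \in A_i} e(a \xi)$, orthogonality yields the identity
\[ E_s(A_1, \dots, A_{2s}) = \int_0^1 \prod_{i=1}^{s} \phi_i(\xi) \prod_{i=s+1}^{2s} \overline{\phi_i(\xi)}\, d\xi, \]
and H\"older's inequality with $2s$ factors, each of exponent $2s$, yields the claim upon noting $\int_0^1 |\phi_i(\xi)|^{2s}\, d\xi = E_s(A_i)$. (An equivalent iterated Cauchy--Schwarz derivation is also available.) Next I would apply Theorem \ref{rtp} to each $A_i = f_i(I_i)$, producing
\[ E_s(A_i) \ll_{s,k} |A_i|^{2s - k + (4k-4)s^{-\eta_k}} + K_i^{c_{s,k}}. \]

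To finish, I would take $2s$-th roots, use subadditivity $(X+Y)^{1/(2s)} \leq X^{1/(2s)} + Y^{1/(2s)}$, and multiply out over $i$; this produces a sum of $2^{2s}$ mixed monomials in the $|A_i|$'s and $K_i$'s. Since $|A_i|, K_i \geq 1$ and the exponent $2s - k + (4k-4)s^{-\eta_k}$ is non-negative (a direct check using the bounds $\eta_k \asymp 2^{-k}$ recorded after Theorem \ref{rtp}), each such monomial is dominated term-by-term by the target bound
\[ (K_1 \cdots K_{2s})^{c_{s,k}/(2s)} \bigl(|A_1|^{1/(2s)} \cdots |A_{2s}|^{1/(2s)}\bigr)^{2s - k + (4k-4)s^{-\eta_k}}, \]
and the $2^{2s} = O_s(1)$ summands are absorbed into $\ll_{s,k}$. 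This gives the corollary with $O_{s,k}(1) = c_{s,k}/(2s)$. There is no real obstacle here: Theorem \ref{rtp} does all of the analytic work, and the only piece of bookkeeping is confirming the non-negativity of the exponent on $|A_i|$, which uses only that $\eta_k$ is exponentially small in $k$.
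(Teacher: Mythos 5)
Your proposal is correct and follows essentially the paper's route: the multilinear H\"older inequality you derive is exactly Lemma \ref{yoc2} of the paper, and the corollary then follows at once from Theorem \ref{rtp} together with the observation (which you rightly flag and which does check out) that the exponent $2s-k+(4k-4)s^{-\eta_k}$ is nonnegative, so the $2^{2s}$ cross terms are all absorbed. The only caveat is that your orthogonality identity over $[0,1]$ is valid only for integer sets, whereas the $A_i$ here are real; the paper's proof of Lemma \ref{yoc2} circumvents this by integrating over $[0,R]$ for large $R$, so one should simply invoke that lemma rather than the $[0,1]$ identity.
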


\begin{proof}
Theorem $\ref{rtp}$ implies that $E_{s}(A_i) \ll_{s,k} K^{O_{s,k}(1)} |A_i|^{2s - k + (4k-4) s^{-\eta_k}}$ for every $1 \leq i \leq 2s$, which further combines with Lemma $\ref{yoc2}$ to deliver the required bound.
\end{proof}


Moreover, while Theorem $\ref{rtp}$ and Corollary $\ref{rtp2}$ and inequalities $\eqref{hrnr}$ and $\eqref{devt}$ can be seen as higher convexity generalisations of $\eqref{elr}$, the methods used herein differ significantly from the previous work done around these topics. For example, Elekes, Nathanson and Ruzsa used incidence geometric tools to prove $\eqref{elr}$, whereas the authors in \cite{HRR2020} and \cite{BHR2021} employed more elementary combinatorial methods in their work. On the other hand, we rely heavily on a variety of tools from arithmetic combinatorics in this paper, wherein, one of the key mechanisms uitilised is to convert information about many-fold additive energies into bounds for the corresponding sumsets. Such a philosophy is epitomised by a classical result in the area, known as the Balog--Szemer\'edi--Gowers theorem (see \cite{Bal2007}, \cite{Sch2015}), and we present the following generalisation of this result.
\par

\begin{Proposition} \label{kp}
Let $\nu, \delta$ be positive real numbers such that $\nu \geq 1$, and let $s \geq 4$ be some even number. Moreover, suppose that $A$ is some finite, non-empty set of real numbers such that $E_{s}(A) \geq |A|^{2s - \nu}$. Then we either have
\[ E_{s/2}(A) > |A|^{s-\nu  + \delta} \]
or there exists some $A' \subseteq A$ such that $|A'| \gg |A|^{1-82\delta},$ and for each $m, n \in \mathbb{N}\cup \{0\}$, we have
 \[ |m {A'} - n {A'}| \ll_{m,n}  |A|^{\nu + 240 (m+n) \delta}. \]
\end{Proposition}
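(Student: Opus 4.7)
My plan is to prove the dichotomy by assuming the failure of the first alternative, i.e.\ that $E_s(A) \geq |A|^{2s-\nu}$ and $E_{s/2}(A) \leq |A|^{s-\nu+\delta}$, and then constructing the subset $A'$ with the required sumset bounds. The starting point is to pass to the $(s/2)$-fold representation function $r(x) = |\{(a_1,\ldots,a_{s/2}) \in A^{s/2} : a_1+\cdots+a_{s/2} = x\}|$, which satisfies $\|r\|_1 = |A|^{s/2}$, $\|r\|_2^2 = E_{s/2}(A)$, and $\|r \ast r\|_2^2 = E_s(A)$. The two hypotheses then read $\|r\|_2^2 \leq |A|^{s-\nu+\delta}$ and $\|r\ast r\|_2^2 \geq |A|^{2s-\nu}$, so we have a function whose convolution has near-extremal $L^2$ norm relative to its own.

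A dyadic pigeonhole on the values of $r$ would then isolate a level set $S = \{x : r(x) \asymp T\} \subseteq (s/2)A$ that carries a substantial portion of both $\|r\|_2^2$ and $\|r\ast r\|_2^2$; comparing the two identities forces $|S| \gs |A|^{\nu - O(\delta)}$ and $T \ls |A|^{(s-\nu)/2 + O(\delta)}$, and in particular pushes the ordinary (unweighted) additive energy $E(S)$ up to $|S|^3 / |A|^{O(\delta)}$. Next I would feed $S$ into a polynomial form of the Balog--Szemer\'edi--Gowers theorem (in Schoen's or Sudakov--Szemer\'edi--Vu's formulation) to extract $S^{\ast} \subseteq S$ with $|S^{\ast}| \gs |S|\cdot|A|^{-O(\delta)}$ and $|S^{\ast}+S^{\ast}| \ls |A|^{\nu + O(\delta)}$.

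The next step is the descent back to $A$. Since each $x \in S^{\ast}$ admits $\asymp T$ representations $x = a + (a_2+\cdots+a_{s/2})$ with $a \in A$, averaging in the first coordinate (iterated over the $s/2$ coordinates) produces a set $A' \subseteq A$ with $|A'| \gs |A|^{1-82\delta}$ together with an auxiliary set $T^{\ast} \subseteq (s/2-1)A$ such that $A' \subseteq S^{\ast} - T^{\ast}$ and with $|S^{\ast} + T^{\ast}|$ and $|T^{\ast} + T^{\ast}|$ still of size $|A|^{\nu + O(\delta)}$. Iterated use of the Ruzsa triangle inequality, followed by Pl\"unnecke--Ruzsa on the pair $(S^{\ast}, T^{\ast})$, propagates the sumset bound to $|mA' - nA'| \ls_{m,n} |A|^{\nu + 240(m+n)\delta}$ for all $m, n \geq 0$; the constants $82$ and $240$ arise respectively from the BSG plus descent losses and from the accumulation of $\delta$-losses under $(m+n)$ triangle-inequality compositions.

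The main obstacle I anticipate is the descent step itself: transferring a sumset bound on the level set $S^{\ast} \subseteq (s/2)A$ to a comparably clean sumset bound on a large subset of the base set $A$, while avoiding any $\log|A|$ losses (the statement is entirely polynomial) and while preserving a small-doubling quality strong enough to survive the subsequent Pl\"unnecke--Ruzsa iteration uniformly in $m$ and $n$. Making the numerology work out so that the single parameter $\delta$ governs both the size loss $|A|^{82\delta}$ on the subset and the sumset loss $|A|^{240(m+n)\delta}$ is the delicate technical heart of the argument.
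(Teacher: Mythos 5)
Your overall strategy -- isolate a popular level set of a many-fold representation function inside $(s/2)A$, run Balog--Szemer\'edi--Gowers on it to find a subset with small doubling, and then descend to a large subset of $A$ -- is the same Borenstein--Croot-style route the paper takes. But the descent, which you correctly flag as the heart of the matter, is where your sketch has a genuine gap. You produce $A' \subseteq S^{*} - T^{*}$ with an auxiliary set $T^{*} \subseteq (s/2-1)A$ and then assert that $|T^{*}+T^{*}|$ and $|S^{*}+T^{*}|$ are still of size $|A|^{\nu+O(\delta)}$; nothing in the construction justifies this, and without it the Ruzsa-triangle/Pl\"unnecke propagation does not close. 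Note also that the fallback of extracting only a doubling bound for $A'$ itself is fatal: one would get $|A'+A'| \ll |A|^{\nu+O(\delta)} \leq K'|A'|$ with $K' \approx |A|^{\nu-1+O(\delta)}$, and Pl\"unnecke--Ruzsa then yields $|mA'-nA'| \ll |A|^{(m+n)(\nu-1)+1+O((m+n)\delta)}$, far weaker than the claimed $|A|^{\nu+240(m+n)\delta}$ once $\nu>1$. The paper's fix is to pigeonhole over the remaining $s/2-1$ coordinates to find a \emph{single} tuple $\vec{w}$ such that $A' + \Sigma(\vec{w}) \subseteq U'$ (the BSG output set); then $mA'-nA'+(m-n)\Sigma(\vec{w}) \subseteq mU'-nU'$, and the Pl\"unnecke--Ruzsa bound for $U'$ transfers to $A'$ with no further loss. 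Your ``iterated averaging'' could be steered toward this, but as written it does not arrive there.

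A secondary but still real problem is the dyadic pigeonhole. The conclusion of the proposition carries implied constants that are absolute (respectively, depend only on $m,n$), so a loss of $\log|A|$ from selecting a dyadic level set cannot be absorbed into $|A|^{O(\delta)}$ when $\delta$ is small. The paper avoids this by taking the single threshold set $S=\{n : r_s(n) \geq \tfrac12 |A|^{s-\nu}\}$ and exploiting the inequality $\sup_n r_s(n) \leq E_{s/2}(A) \leq |A|^{s-\nu+\delta}$ (a consequence of Young's convolution inequality, and the one place the hypothesis on $E_{s/2}(A)$ really bites) to show $S$ carries almost all of the $L^1$ mass while $|S| \leq 4|A|^{\nu}$. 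Relatedly, restricting the quadrilinear quantity $\|r\ast r\|_2^2$ to a single level set of $r$ requires pigeonholing in four variables, which either multiplies the logarithmic losses or leaves you with four different level sets to reconcile; this should be addressed rather than elided. With the threshold trick in place of the dyadic decomposition and the single-translate descent in place of the set $T^{*}$, your outline matches the paper's proof.
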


 
Noting Lemma $\ref{yoc}$, we see that $E_{s}(A) \leq |A|^s E_{s/2}(A)$ for every even natural number $s$, and so, Proposition $\ref{kp}$ roughly states that whenever $E_{s/2}(A)$ and $E_{s}(A)$ follow an almost optimal relation, then we can obtain a very strong control over the many-fold sumsets of a large enough subset $A'$ of $A$. Another such inverse result arises in work of Shkredov \cite{Sh2013}, who showed that whenever $E_{2}(A)$ and the so-called higher energy follow an almost optimal relation, then one can obtain a conclusion of a similar strength as Proposition $\ref{kp}$. This result has found a variety of applications in arithmetic combinatorics and related areas, for instance, we point the reader to some of our recent work on threshold breaking estimates for additive energies of sets lying on curves and spheres (see \cite{Ak2020b}, \cite{Ak2021a}).  Similarly, Bateman and Katz \cite{BK2011} studied structure theorems for sets $\Lambda$ that, amongst other properties, satisfied almost optimal relations between $E_{2}(\Lambda)$ and $E_{4}(\Lambda)$, and this formed a crucial ingredient in their breakthrough work on the cap set problem. Relatives of these ideas also present themselves in the recent groundbreaking work of Bloom--Sisask \cite{BS2020} on Roth's theorem on arithmetic progressions.
\par

We end this section by noting that even in the case when $A = \mathcal{N}_k$, obtaining near optimal bounds for $E_{s}(\mathcal{N}_k)$ for smaller values of $s$, say, when $s=o(k^2)$, is incredibly hard, and any significant progress in that direction would imply major improvements in Waring's problem. Thus, writing $s_0(k)$ to be the smallest value of $s$ for which we have
\[ E_{s}(\mathcal{N}_k) \ll_{s,k, \epsilon} N^{2s - k + \epsilon}  \]
for every $\epsilon>0$, the problem of finding optimal estimates for $s_0(k)$ has been studied by multiple authors, with the current best known bounds arising from the work of Wooley \cite[Corollary $14.7$]{Wo2017}, who proved that $s_0(k) \leq k^2 - k +2\floor{\sqrt{2k+2}}$. We refer the reader to Section $14$ of the latter paper for more details regarding this topic.	
\par


\section{Preliminaries}

We begin by recording some preliminary definitions and results that we require in our proof of Theorem $\ref{rtp}$. Thus, given a finite, non-empty set $A$ of real numbers, for each $n \in \mathbb{R}$, we write $r_{s}(n)$ to denote the number of solutions to the equation $x_1 + \dots + x_s = n$ with $x_1, \dots, x_s \in A$. Thus, for any $n \in \mathbb{R}$, we have $r_{s}(n) \geq 1$ if and only if $n \in s {A}$. Moreover, a standard double counting argument implies that
\begin{equation} \label{sda}
 \sum_{n \in sA} r_{s}(n) = |A|^{s} \ \text{and} \ \sum_{n \in sA} r_{s}(n)^2 = E_{s}(A). 
 \end{equation}
Additionally, given a finite subset $X$ of $\mathbb{R}$, we use $\mathds{1}_{X}$ to denote the characteristic function of $X$, that is, given $n \in \mathbb{R}$, we have $\mathds{1}_{X}(n) = 1$ if $n \in X$, and $\mathds{1}_{X}(n) = 0$ otherwise. We now record some additive combinatorial inequalities concerning the relation between the representation function $r_{s}$ and the additive energy $E_{s}(A)$. 

\begin{lemma} \label{yoc}
Let $A$ be a set of real numbers and let $s$ be a natural number. Then, whenever $s$ is even, we have
\[ \sup_{n} r_{s}(n) \leq E_{s/2}(A). \]
Similarly, for each $1 \leq l < s$, we have
\[ E_{s}(A) \leq |A|^{2s-2l} E_{l}(A). \]
\end{lemma}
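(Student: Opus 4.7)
The plan is to derive both inequalities from the elementary convolution identity
\[ r_s(n) = \sum_{m} r_l(m) \, r_{s-l}(n-m), \]
valid for any $1 \leq l < s$, which simply expresses the decomposition of a solution to $x_1 + \dots + x_s = n$ into an $l$-fold part summing to some $m$ and an $(s-l)$-fold part summing to $n-m$. Both estimates will then follow from a single application of Cauchy--Schwarz combined with the identities in $\eqref{sda}$, the difference being only in how the factors are paired.

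For the first inequality I would set $s = 2t$ and apply Cauchy--Schwarz to the convolution with $l = t$ in its standard form
\[ r_{2t}(n) \leq \Bigl( \sum_m r_t(m)^2 \Bigr)^{1/2} \Bigl( \sum_m r_t(n-m)^2 \Bigr)^{1/2}, \]
noting that each factor equals $E_t(A)^{1/2}$ by $\eqref{sda}$ (the second sum being just a reindexing of the first), so that the product is $E_t(A)$ uniformly in $n$. Taking the supremum then yields the claim.

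For the second inequality I would instead apply Cauchy--Schwarz asymmetrically to the same convolution, splitting $r_{s-l}(m) r_l(n-m)$ as $r_{s-l}(m)^{1/2} \cdot r_{s-l}(m)^{1/2} r_l(n-m)$. This produces
\[ r_s(n)^2 \leq \Bigl( \sum_m r_{s-l}(m) \Bigr) \Bigl( \sum_m r_{s-l}(m) \, r_l(n-m)^2 \Bigr), \]
where the first factor equals $|A|^{s-l}$ by the first identity of $\eqref{sda}$. Summing over $n$ and interchanging the order of summation in the second factor introduces another factor of $|A|^{s-l}$ together with $E_l(A)$, and combining these gives $E_s(A) \leq |A|^{2s - 2l} E_l(A)$, as required.

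I do not anticipate any genuine obstacle, as the entire argument is a short manipulation of the convolution identity via Cauchy--Schwarz; the only mild point of care is choosing the asymmetric factorization in the second part so as to extract the total mass $|A|^{s-l}$ from one factor while isolating $E_l(A)$ in the other.
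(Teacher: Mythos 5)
Your proof is correct and is essentially the paper's argument: the paper writes $r_s = \mathds{1}_A *_{s-1}\mathds{1}_A$ and invokes Young's convolution inequality (in the forms $\|f*g\|_\infty \le \|f\|_2\|g\|_2$ and $\|f*g\|_2 \le \|f\|_1\|g\|_2$), and your two Cauchy--Schwarz manipulations of the convolution identity are exactly the standard proofs of those two instances. No gaps.
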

\begin{proof}
Let $G$ be the finitely generated abelian group spanned by elements of $A$. We begin by defining, for any finitely supported functions $f,g : \mathbb{R} \to \mathbb{R}$, the convolution
\[ (f*g) (x) = \sum_{n \in G} f(n) g(x-n) \ \text{for each} \ x \in \mathbb{R}. \]
Moreover, we can extend this definition for many-fold convolutions, by writing $f*_{1}f(x) = (f*f)(x)$ and $f*_{s}f (x) = (f * (f*_{s-1} f))(x)$ for each $s \geq 2$. Finally, for each $1 \leq p < \infty$, we denote the $l^p(G)$ norms of $f$ to be
\[ \n{f}_{p} = (\sum_{n \in G} |f(n)|^{p})^{1/p} \ \text{and} \ \n{f}_{\infty} = \sup_{n \in G} |f(n)|. \]
From these definitions, we see that $r_{s}(n) = (\mathds{1}_A *_{s-1} \mathds{1}_A) (n)$, and so, we can further rewrite $\eqref{sda}$ as
\[  \n{\mathds{1}_A *_{s-1} \mathds{1}_A}_{1} = |A|^s, \ \text{and} \  \n{\mathds{1}_A *_{s-1} \mathds{1}_A}_2^2 = E_{s}(A). \]
We can now use Young's convolution inequality to deduce that
\begin{align*}
\sup_{n} r_{s}(n)  = \n{\mathds{1}_A *_{s-1} \mathds{1}_A}_{\infty}   \leq \n{(\mathds{1}_A *_{s/2-1}\mathds{1}_A) }_{2}^2 = E_{s/2}(A), 
\end{align*}
whenever $s$ is even. Similarly, for every $1 \leq l < s$, we can again apply Young's convolution inequality to discern that
\begin{align*}
E_{s}(A)^{1/2} & = \n{\mathds{1}_A*_{s-1} \mathds{1}_A}_2  \leq \n{\mathds{1}_A*_{s-l-1} \mathds{1}_A}_{1} \n{\mathds{1}_A*_{l-1} \mathds{1}_A}_{2} = |A|^{s-l} E_{l}(A)^{1/2},
\end{align*}
which delivers the required estimate.
\end{proof}

We also prove upper bounds for $E_{s}(A_1, \dots, A_{2s})$ in terms of $E_{s}(A_1), \dots, E_{s}(A_{2s})$.

\begin{lemma} \label{yoc2}
Given finite, non-empty sets $A_1, \dots, A_{2s}$ of real numbers, we have
\[ E_{s}(A_1,\dots, A_{2s}) \leq E_{s}(A_1)^{1/2s} \dots E_{s}(A_{2s})^{1/2s}. \]
\end{lemma}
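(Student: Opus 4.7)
The plan is to reduce the inequality to a single application of the generalised Hölder inequality, by using the Fourier interpretation of $E_s$. Let $G$ be the finitely generated abelian group spanned by $A_1 \cup \cdots \cup A_{2s}$, exactly as in the proof of Lemma~\ref{yoc}, and let $\widehat{G}$ denote its Pontryagin dual, equipped with its Haar measure. Each $\mathds{1}_{A_i}$ is viewed as a finitely supported function on $G$ with Fourier transform $\widehat{\mathds{1}_{A_i}}$ on $\widehat{G}$.

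The first step is to rewrite
\[ E_s(A_1, \ldots, A_{2s}) \;=\; \big\langle \mathds{1}_{A_1} * \cdots * \mathds{1}_{A_s}, \; \mathds{1}_{A_{s+1}} * \cdots * \mathds{1}_{A_{2s}} \big\rangle, \]
and invoke Parseval together with the convolution rule $\widehat{f * g} = \widehat{f}\,\widehat{g}$ to express
\[ E_s(A_1, \ldots, A_{2s}) \;=\; \int_{\widehat{G}} \, \prod_{i=1}^{s} \widehat{\mathds{1}_{A_i}}(\theta) \, \prod_{j=s+1}^{2s} \overline{\widehat{\mathds{1}_{A_j}}(\theta)} \; d\theta. \]
Specialising $A_i = A$ for every $i$ gives the familiar identity $E_s(A) = \int_{\widehat{G}} |\widehat{\mathds{1}_A}(\theta)|^{2s}\,d\theta$, which is precisely what will match each factor on the right-hand side of the target bound.

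The second step is a direct Hölder estimate: taking absolute values inside the integral and applying the generalised Hölder inequality to the $2s$ factors $|\widehat{\mathds{1}_{A_i}}|$ with equal exponents $2s$ yields
\[ E_s(A_1, \ldots, A_{2s}) \;\leq\; \int_{\widehat{G}} \prod_{i=1}^{2s} \big|\widehat{\mathds{1}_{A_i}}(\theta)\big| \, d\theta \;\leq\; \prod_{i=1}^{2s} \left( \int_{\widehat{G}} \big|\widehat{\mathds{1}_{A_i}}(\theta)\big|^{2s} \, d\theta \right)^{\!1/(2s)} \;=\; \prod_{i=1}^{2s} E_s(A_i)^{1/(2s)}, \]
which is exactly the claimed inequality.

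There is no serious obstacle to overcome: the result is essentially just Hölder in disguise, and the only mild care needed is to fix the Fourier set-up on $G$, which is entirely parallel to the convolution framework already employed in Lemma~\ref{yoc}. For readers wishing to avoid harmonic analysis altogether, a Fourier-free variant would first apply Cauchy--Schwarz to split the pairing into $\|\mathds{1}_{A_1} * \cdots * \mathds{1}_{A_s}\|_2 \cdot \|\mathds{1}_{A_{s+1}} * \cdots * \mathds{1}_{A_{2s}}\|_2$ and then control each convolution $\ell^2$-norm by a Hölder-type estimate to reach the same bound; but the single-step Fourier plus Hölder argument outlined above is the cleanest route.
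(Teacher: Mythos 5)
Your proof is correct and follows essentially the same route as the paper: both express the mixed energy $E_s(A_1,\dots,A_{2s})$ in Fourier form and then apply the generalised H\"older inequality with exponents $2s$, identifying each factor $\big(\int |\widehat{\mathds{1}_{A_i}}|^{2s}\big)^{1/2s}$ with $E_s(A_i)^{1/2s}$. The only difference is in implementation: the paper works with truncated integrals $\int_{[0,R]}$ of the exponential sums $f_i(\alpha)=\sum_{a\in A_i}e(a\alpha)$ over the reals and controls the resulting $O(R^{-1})$ error terms by choosing $R$ large in terms of $\epsilon$ and the minimal nonzero elements of the relevant difference sets, whereas you invoke exact Parseval on $\widehat{G}$ (legitimate, since $G$ is a finitely generated torsion-free subgroup of $\mathbb{R}$, hence free abelian of finite rank with dual a torus), which avoids that bookkeeping entirely.
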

\begin{proof}
It suffices to show that for each $\epsilon>0$, we have
\[ E_{s}(A_1, \dots, A_{2s}) \leq  E_{s}(A_1)^{1/2s} \dots E_{s}(A_{2s})^{1/2s} + \epsilon. \]
We begin this endeavour by defining, for every pair $\xi, R$ of real numbers satisfying $\xi \neq 0$ and $R >0$, the quantity $I(R,\xi) = \int_{[0,R]} e(\xi \alpha) d \alpha.$ When $\xi \neq 0$, we see that
\begin{equation} \label{rnm4}
|I(R, \xi)| = | (2 \pi i \xi)^{-1}(e(\xi R ) - 1)| \ll  |\xi|^{-1}, 
\end{equation}
whereas $I(R, 0) = R$. Moreover, for every $1 \leq i \leq 2s$, we define the exponential sum $f_i(\alpha) = \sum_{a \in A_i} e( a\alpha)$, while we use $\mathcal{A}$ to denote the set $A_1 \times \dots \times A_{2s}$, and we use $\vec{a} = (a_1, \dots, a_{2s})$ to denote an element in $\mathcal{A}$. Next, we write
\[ \mathcal{U} = \{ \vec{a} \in \mathcal{A} \ | \ \sum_{i=1}^{s} (a_i - a_{i+s}) = 0\} , \ \text{and} \ \mathcal{V} = \{ \vec{a} \in \mathcal{A} \ | \ \sum_{i=1}^{s} (a_i - a_{i+s}) \neq 0\}, \]
and $T_0 = (A_1 + \dots + A_s - A_{s+1} - \dots - A_{2s})\setminus \{0\}$, and $T_i = (sA_i - sA_i) \setminus \{0\}$ for each $1 \leq i \leq 2s$.  Finally, we set $\xi_0 = \min_{\xi \in T_0} |\xi|,$ and $\xi_i = \min_{\xi \in T_i} |\xi|$, for each $1 \leq i \leq 2s$. 
\par

With this notation in hand, we see that
\begin{align} \label{hld33}
\int_{[0,R]} f_1(\alpha) \dots f_s( \alpha)  \overline{ f_{s+1}(\alpha) }\dots & \overline{f_{2s}(\alpha)} d \alpha  
 = \sum_{\vec{a} \in \mathcal{U}} I(R,0) + \sum_{\vec{a} \in \mathcal{V}} I(R, \sum_{i=1}^{s}(a_i - a_{i+s}))  \nonumber \\
& = R E_{s}(A_1, \dots, A_{2s}) + O(|A_1| \dots |A_{2s}| \xi_{0}^{-1}),
\end{align}
where the last step follows from $\eqref{rnm4}$. Similarly, for each $1 \leq i \leq 2s$, we have
\begin{equation} \label{hld4}
 \int_{[0,R]} |f_i(\alpha)|^{2s} d \alpha  = R E_{s}(A_i) + O(|A_i|^{2s}\xi_{i}^{-1}). 
 \end{equation} 
We may now use H\"{o}lder's inequality to deduce that
\[ \int_{[0,R]} f_1(\alpha) \dots f_s( \alpha) \overline{ f_{s+1}(\alpha) }\dots \overline{f_{2s}(\alpha)} d \alpha   \leq \prod_{i=1}^{2s} \Big( \int_{[0,R]} |f_i(\alpha)|^{2s} d \alpha \Big)^{1/2s}, \]
which then combines with $\eqref{hld33}$ and $\eqref{hld4}$ to deliver the estimate
\[ E_{s}(A_1, \dots, A_{2s}) + O(R^{-1}|A_1| \dots |A_{2s}| \xi_{0}^{-1}) \leq \prod_{i=1}^{2s} ( E_{s}(A_i) + O(R^{-1}|A_i|^{2s}\xi_{i}^{-1}))^{1/2s}. \]
Choosing $R$ to be some sufficiently large real number in terms of $\epsilon,s,|\mathcal{A}|, \xi_0, \dots, \xi_{2s},$ say, $R \geq \epsilon^{-1} (4s^2|\mathcal{A}|)^{4s^2}  \sum_{i=0}^{2s}  \xi_{i}^{-1},$ supplies the desired inequality.
\end{proof}

We note that such results can also be extended for multiplicative energies of sets of positive real numbers. In order to see this, note that for any finite set $A \subseteq (0, \infty)$, we can consider the set $\phi(A)$, where $\phi(x) = \log x$. In this case, we have
\[ x_1 \dots x_s = x_{s+1} \dots x_{s+r} \ \ \text{if and only if} \ \ \phi(x_1) + \dots + \phi(x_s) = \phi(x_{s+1}) + \dots + \phi(x_{s+r}), \]
for every $s,r \in \mathbb{N}$ and for every $x_1, \dots, x_{s+r} \in A$. Consequently, we see that $M_{s}(X) = E_{s}(\phi(X))$ for every $s \in \mathbb{N}$ and for every $X \subseteq A$. Similarly, for every $m,n \in \mathbb{N} \cup \{0\}$ and for every $X\subseteq A$, we have $|X^{(m)}/X^{(n)}| = |m \phi(X) - n\phi(X)|$. 
\par

\begin{lemma} \label{mlpain}
Let $A_1, \dots, A_{2s}$ be finite subsets of $\mathbb{R} \setminus \{0\}$. Then 
\[ M_{s}(A_1, \dots, A_{2s}) \leq 2^{2s} M_{s}(A_1)^{1/2s} \dots M_{s}(A_{2s})^{1/2s}. \]
\end{lemma}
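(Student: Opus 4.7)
The plan is to reduce the multiplicative statement to the additive one (Lemma \ref{yoc2}) via a logarithm, handling signs by a decomposition of the $A_i$ into their positive and negative parts. Concretely, for each $i$, I would split $A_i = A_i^{+} \cup A_i^{-}$ according to sign and observe that any tuple $(a_1, \dots, a_{2s})$ contributing to $M_{s}(A_1, \dots, A_{2s})$ must satisfy $\prod_{i=1}^{s}\mathrm{sgn}(a_i) = \prod_{i=s+1}^{2s} \mathrm{sgn}(a_i)$, since the two sides of the defining equation $a_1 \cdots a_s = a_{s+1} \cdots a_{2s}$ share a sign. Stratifying the count by the sign pattern $(\epsilon_1, \dots, \epsilon_{2s}) \in \{\pm 1\}^{2s}$ would give
\[
M_{s}(A_1, \dots, A_{2s}) = \sum_{(\epsilon_i) : \prod_{i \leq s}\epsilon_i = \prod_{i>s} \epsilon_i} M_{s}(A_1^{\epsilon_1}, \dots, A_{2s}^{\epsilon_{2s}}),
\]
with at most $2^{2s-1}$ terms in the sum (one linear constraint on $2^{2s}$ sign patterns).

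Next I would replace each $A_i^{-}$ by its absolute value; this gives a set $\widetilde{A}_i^{\epsilon_i} \subseteq (0, \infty)$, and since the sign constraint has already been imposed, the equation $a_1 \cdots a_s = a_{s+1} \cdots a_{2s}$ on the signed sets reduces to the identical equation on the absolute values. Thus each summand equals $M_{s}(\widetilde{A}_1^{\epsilon_1}, \dots, \widetilde{A}_{2s}^{\epsilon_{2s}})$, and applying the logarithm converts this to $E_{s}(\log \widetilde{A}_1^{\epsilon_1}, \dots, \log \widetilde{A}_{2s}^{\epsilon_{2s}})$, as in the discussion immediately preceding the lemma. Lemma \ref{yoc2} then gives
\[
M_{s}(\widetilde{A}_1^{\epsilon_1}, \dots, \widetilde{A}_{2s}^{\epsilon_{2s}}) \leq \prod_{i=1}^{2s} E_{s}(\log \widetilde{A}_i^{\epsilon_i})^{1/2s} = \prod_{i=1}^{2s} M_{s}(\widetilde{A}_i^{\epsilon_i})^{1/2s}.
\]

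To finish, I would use the trivial monotonicity $M_{s}(\widetilde{A}_i^{\epsilon_i}) \leq M_{s}(A_i)$; this follows since $A_i^{+} \subseteq A_i$, while negating an entire set leaves the multiplicative energy unchanged because a factor of $(-1)^{s}$ appears on both sides of the defining equation. Summing the resulting bound over the $\leq 2^{2s-1}$ valid sign patterns yields
\[
M_{s}(A_1, \dots, A_{2s}) \leq 2^{2s-1} \prod_{i=1}^{2s} M_{s}(A_i)^{1/2s} \leq 2^{2s} \prod_{i=1}^{2s} M_{s}(A_i)^{1/2s},
\]
which is the desired estimate. The only mild subtlety is the bookkeeping with signs; all of the substantive analytic work has already been done in Lemma \ref{yoc2}.
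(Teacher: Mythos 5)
Your proposal is correct and follows essentially the same route as the paper: decompose each $A_i$ by sign, reduce to sets of positive reals, pass to additive energies via the logarithm and apply Lemma \ref{yoc2}, then use that each signed piece has multiplicative energy at most $M_s(A_i)$. The only cosmetic difference is that you discard the sign patterns violating the parity constraint (getting $2^{2s-1}$ terms, hence a marginally sharper constant), whereas the paper keeps all $2^{2s}$ patterns and bounds each term by its absolute-value counterpart.
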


\begin{proof}
Writing $A_{i} = (-A_{i,1}) \cup A_{i,2}$ where $A_{i,1}, A_{i,2} \subseteq (0 , \infty)$ for every $1 \leq i \leq 2s$, we have 
\begin{align} \label{deshor}
M_{s}(A_1, \dots, A_{2s}) &= \sum_{1 \leq j_1 , \dots, j_{2s} \leq 2} M_{s}((-1)^{j_1} A_{1,j_1}, \dots, (-1)^{j_{2s}} A_{2s,j_{2s}}) \nonumber \\
& \leq \sum_{1 \leq j_1 , \dots, j_{2s} \leq 2} M_{s}( A_{1,j_1}, \dots,  A_{2s,j_{2s}}) , 
\end{align}
where the last inequality follows from the fact that whenever $x_1 \dots x_{s} = x_{s+1} \dots x_{2s}$, then $|x_1| \dots |x_{s}| = |x_{s+1}| \dots |x_{2s}|$. As before, writing $\phi(X) = \{ \log x \ | \ x \in X\}$ for every finite set $X \subseteq (0, \infty)$ and consequently applying Lemma $\ref{yoc2}$, we discern that 
\begin{align*}
 M_{s}( A_{1,j_1}, \dots,  A_{2s,j_{2s}}) & = E_{s}(\phi( A_{1,j_1}), \dots, \phi(A_{2s,j_{2s}})) \leq \prod_{i=1}^{2s} E_{s}(\phi(A_{i,j_i}))^{1/2s}  \\
& = \prod_{i=1}^{2s} M_{s}(A_{i,j_i})^{1/2s} \leq \prod_{i=1}^{2s} M_{s}(A_i)^{1/2s},
\end{align*}
for every $1 \leq j_1, \dots, j_{2s} \leq 2$. Combining this with $\eqref{deshor}$ finishes our proof.
\end{proof}

As in the case of Theorem $\ref{zidt}$, the condition $A_{i} \subseteq \mathbb{R} \setminus \{0\}$ is necessary in the above lemma, since we may choose $A_1 = A_{s+1} = \{0\}$ and $A_{i} = A_{s+i} = P_N$ for every $2 \leq i \leq s$, where $P_N$ is the set of first $N$ primes. In this case, since we have $M_{s}(P_N) \ll_{s} N^{s}$, we get $2^{2s} M_{s}(A_1)^{1/2s} \dots M_{s}(A_{2s})^{1/2s} \ll_{s} N^{s-1}$, while $M_{s}(A_1, \dots, A_{2s}) \geq N^{2s- 2}$ due to the fact that $0 \cdot x_2 \dots x_{s} = 0 \cdot x_{s+2} \dots x_{2s}$ for any $x_2, \dots, x_{s}, x_{s+2}, \dots, x_{2s} \in \mathbb{R}$. 


\begin{lemma} \label{hld3}
Let $A_1, \dots, A_n$ be pairwise disjoint finite sets of real numbers, and let $A_0 = A_1 \cup \dots \cup A_n$. Then 
\[ E_{s}(A_0) \leq n^{2s-1} \sum_{i=1}^{n} E_{s}(A_i)  \leq n^{2s} \sup_{1 \leq i \leq n} E_{s}(A_i) . \]
Moreover, if $0 \notin A_0$, then
\[ M_{s}(A_0) \leq 2^{2s} n^{2s-1} \sum_{i=1}^{n} M_{s}(A_i)  \leq (2n)^{2s} \sup_{1 \leq i \leq n} M_{s}(A_i) . \]
\end{lemma}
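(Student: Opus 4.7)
The plan is to use disjointness of $A_1, \ldots, A_n$ to split every $2s$-tuple counted by $E_{s}(A_0)$ according to which $A_i$ each coordinate lies in, and then to bound each resulting mixed energy via Lemma \ref{yoc2}.

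First, for each assignment $\vec{j} = (j_1, \ldots, j_{2s}) \in \{1, \ldots, n\}^{2s}$, disjointness yields the exact identity
\[ E_{s}(A_0) = \sum_{\vec{j}} E_{s}(A_{j_1}, \ldots, A_{j_{2s}}), \]
since each $a_i \in A_0$ belongs to exactly one $A_{j_i}$. Applying Lemma \ref{yoc2} to each summand and then AM-GM on the $(2s)$th roots gives
\[ E_{s}(A_{j_1}, \ldots, A_{j_{2s}}) \leq \prod_{i=1}^{2s} E_{s}(A_{j_i})^{1/(2s)} \leq \frac{1}{2s} \sum_{i=1}^{2s} E_{s}(A_{j_i}). \]

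Next I would interchange the order of summation: for each fixed coordinate $i \in \{1, \ldots, 2s\}$ and each fixed $k \in \{1, \ldots, n\}$, the number of $\vec{j}$ with $j_i = k$ is exactly $n^{2s-1}$, so
\[ \sum_{\vec{j}} \sum_{i=1}^{2s} E_{s}(A_{j_i}) = 2s \cdot n^{2s-1} \sum_{k=1}^{n} E_{s}(A_k). \]
Feeding this back into the previous estimate produces $E_{s}(A_0) \leq n^{2s-1} \sum_{k} E_{s}(A_k)$, and the second inequality in the additive statement follows at once by replacing each summand with the supremum.

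For the multiplicative statement, the same argument applies verbatim with Lemma \ref{mlpain} in place of Lemma \ref{yoc2}. The hypothesis $0 \notin A_0$ ensures that every $A_i \subseteq \mathbb{R} \setminus \{0\}$, which is precisely what is needed to invoke Lemma \ref{mlpain}; its extra prefactor of $2^{2s}$ propagates through both AM-GM and the counting step, producing $M_{s}(A_0) \leq 2^{2s} n^{2s-1} \sum_k M_{s}(A_k)$. I do not foresee a genuine obstacle in any of this; the only step requiring a little care is verifying that the factor $(2s)^{-1}$ coming from AM-GM cancels precisely against the $2s$ coming from counting the $\vec{j}$'s with a prescribed value in one coordinate, leaving the clean factor $n^{2s-1}$.
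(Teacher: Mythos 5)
Your proof is correct and follows essentially the same route as the paper: both decompose $E_s(A_0)$ into mixed energies over all assignments of coordinates to the $A_i$ (valid by disjointness), bound each term via Lemma \ref{yoc2} (resp.\ Lemma \ref{mlpain} for the multiplicative case, where $0 \notin A_0$ is indeed exactly what is needed), and then conclude by an elementary averaging inequality. The only cosmetic difference is that the paper factors the resulting sum as $\bigl(\sum_i E_s(A_i)^{1/2s}\bigr)^{2s}$ and applies the power-mean inequality, whereas you apply AM--GM termwise and interchange summation; both yield the same factor $n^{2s-1}$.
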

\begin{proof}
A straightforward application of Lemma $\ref{yoc2}$ and H\"{o}lder's inequality implies that
\begin{align*}
E_{s}(A_0) 
& = \sum_{1 \leq i_1, \dots, i_{2s} \leq n} E_{s}(A_{i_1}, \dots, A_{i_{2s}}) \leq \sum_{1 \leq i_1, \dots, i_{2s} \leq n} \prod_{j=1}^{2s} E_{s}(A_{i_j})^{1/2s} \\
& = \big(\sum_{i=1}^{n} E_{s}(A_i)^{1/2s}\big)^{2s} \leq n^{2s-1} \sum_{i=1}^{n} E_{s}(A_i) \leq n^{2s} \sup_{1 \leq i \leq n} E_{s}(A_i),
\end{align*}
whence we obtain the first inequality stated in Lemma $\ref{hld3}$. The proof of the second inequality follows mutatis mutandis, except we apply Lemma $\ref{mlpain}$ instead of Lemma $\ref{yoc2}$. 
\end{proof}

Our proof of Proposition $\ref{kp}$ will be using the Balog--Szemer\'edi--Gowers theorem, that allows us to convert information on $2$-fold additive energies into bounds for the sumset. The former hypothesis can equivalently be expressed in terms of estimates on restricted sumsets on graphs, and we present one such variant here \cite[Theorem $5$]{Bal2007}.

\begin{lemma} \label{balbsg}
Let $A, B$ be finite subsets of an additive abelian group $Z$ and let $G$ be a subset of $A \times B$ with $S = \{ a+b \ | \ (a,b) \in G\}$. If $|A|, |B|, |S| \leq N$ and $|G| \geq \alpha N^2$, then there exists an $A' \subseteq A$ such that
\[ |A'+A'| \leq \frac{2^{38}}{3} \frac{\log (32/\alpha)}{ \alpha^{7}} N \ \ \text{and} \  \ |A'| \geq \frac{3}{2^{16}}  \frac{\alpha^3}{\log(32/\alpha) } N  \]
\end{lemma}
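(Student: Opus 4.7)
The statement is a quantitative form of the Balog--Szemer\'edi--Gowers theorem, so my plan is to view $G$ as a bipartite graph with parts $A, B$ of size $\leq N$ and $|G| \geq \alpha N^{2}$ edges, and then to extract a large structured subset $A' \subseteq A$. Writing $N(a) = \{b : (a,b) \in G\}$, the starting point is Cauchy--Schwarz applied to the degrees on the $B$ side:
\[ \sum_{a_1, a_2 \in A} |N(a_1) \cap N(a_2)| = \sum_{b \in B} |\{a : (a,b) \in G\}|^{2} \geq \frac{|G|^{2}}{|B|} \geq \alpha^{2} N^{3}, \]
so that on average a pair $(a_1, a_2) \in A \times A$ has at least $\alpha^{2} N$ common neighbors in $B$.

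Next, I would use a dependent random choice argument to extract a set $A' \subseteq A$ on which the common-neighborhood property becomes \emph{uniform}, meaning that every pair $(a_1, a_2) \in A' \times A'$ has $\gg \alpha^{O(1)} N$ common neighbors in $B$. The standard implementation samples two independent vertices $b_1, b_2 \in B$ and takes $A'$ to be essentially $N(b_1) \cap N(b_2)$; a Markov-type removal of pairs with too few common neighbors, combined with a pigeonhole over the sampling distribution, produces a set of size $\gg \alpha^{3} N / \log(1/\alpha)$. This is exactly where the $\alpha^{3}/\log(32/\alpha)$ factor in the lower bound on $|A'|$ arises, with the logarithm being the usual cost of the refinement step.

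Third, given such an $A'$, the identity $a_1 - a_2 = (a_1 + b) - (a_2 + b)$, valid whenever $b \in N(a_1) \cap N(a_2)$, exhibits many representations of each element of $A' - A'$ as a difference of two elements of $S$. A standard counting argument using $|S| \leq N$ then yields $|A' - A'| \ll \alpha^{-O(1)} N$. To pass from the difference set to $A' + A'$ and recover the exponent $7$ on $\alpha^{-1}$, I would then invoke the Pl\"unnecke--Ruzsa inequality in the form $|A' + A'| \leq |A' - A'|^{3}/|A'|^{2}$, and combine with the lower bound on $|A'|$ obtained above.

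The main obstacle is not the shape of the argument but the careful bookkeeping of constants. Each of Cauchy--Schwarz, the Markov-type refinement, and Pl\"unnecke--Ruzsa introduces polynomial losses in $\alpha$, and balancing the size of $A'$ against the quality of the common-neighborhood refinement so as to land exactly on $\alpha^{-7}$ and $\alpha^{3}/\log(32/\alpha)$, together with the precise numerical constants $2^{38}/3$ and $3/2^{16}$, requires choosing each threshold with care. Since the result is quoted verbatim from \cite[Theorem~5]{Bal2007}, the cleanest implementation is to invoke that paper directly rather than redo the bookkeeping.
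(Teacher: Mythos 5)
The paper does not actually prove this lemma: it is imported verbatim from Balog's paper \cite[Theorem 5]{Bal2007}, which is exactly what you conclude, so your proposal takes the same route as the paper. Your sketch of the underlying argument (Cauchy--Schwarz on degrees, a dependent-random-choice refinement costing the $\log(32/\alpha)$ factor, counting representations of differences against pairs from $S$, and a Pl\"unnecke--Ruzsa step to pass to $A'+A'$) is a sound outline of the standard proof, and deferring the exact constants to the cited source is appropriate.
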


We will also be using a classical result in additive combinatorics that bounds the size of many-fold sumsets in terms of the cardinality of two-fold sumsets. Thus, we record the Pl{\"u}nnecke--Ruzsa theorem \cite[Corollary 6.29]{TV2006}.
\begin{lemma} \label{pr21}
Let $A$ be a finite subset of some additive abelian group $G$. If $|A+A| \leq K|A|$, then for all non-negative integers $m,n$, we have
\[  |mA - nA| \leq K^{m+n}|A|. \]
\end{lemma}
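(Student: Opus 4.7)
The plan is to prove the Pl\"{u}nnecke--Ruzsa inequality via Petridis's approach, which avoids Pl\"{u}nnecke's original graph-theoretic machinery and yields the sharp constant $K^{m+n}$.

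The argument has three main ingredients. First, I would establish a Ruzsa-type triangle inequality: for any finite subsets $X, Y, Z$ of an abelian group,
\[ |X| \cdot |Y - Z| \leq |X + Y| \cdot |X + Z|. \]
This follows from a direct injection: fix, for each $d \in Y - Z$, a representation $d = y_d - z_d$ with $y_d \in Y$ and $z_d \in Z$, and send $(x, d) \in X \times (Y - Z)$ to $(x + y_d, x + z_d) \in (X + Y) \times (X + Z)$. Taking the difference of the two coordinates recovers $d$, which determines $y_d$ and $z_d$ by our fixed choice, and then the first coordinate determines $x$.

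The second and central ingredient is Petridis's lemma: choose a non-empty subset $X \subseteq A$ minimising the ratio $|X + A|/|X|$, and let $K' := |X + A|/|X|$, so $K' \leq K$. Then for every finite set $C$,
\[ |X + A + C| \leq K'\,|X + C|. \]
Petridis proves this by induction on $|C|$. For the inductive step, writing $C = C' \cup \{c\}$ with $c \notin C'$, one isolates the portion of $X + A + c$ that is not already contained in $X + A + C'$ via an auxiliary subset of $X$, and invokes the minimality property that $|Y + A| \geq K'|Y|$ for every non-empty $Y \subseteq X$. The delicate bookkeeping required to make the minimality hypothesis deliver an upper bound (rather than only a lower bound) on the relevant sumset is the main technical obstacle in the proof. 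Iterating the lemma with $C = A, 2A, \ldots, (n-1)A$ then yields $|X + nA| \leq (K')^n |X| \leq K^n |X|$ for every $n \geq 0$.

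Finally, specialising the Ruzsa-type triangle inequality from the first paragraph with $Y = mA$ and $Z = nA$ yields
\[ |X| \cdot |mA - nA| \leq |X + mA| \cdot |X + nA| \leq K^{m+n}\,|X|^2. \]
Dividing both sides by $|X|$ and using the trivial bound $|X| \leq |A|$ gives $|mA - nA| \leq K^{m+n} |A|$, as required.
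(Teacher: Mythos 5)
Your proposal is correct, and it supplies an actual proof where the paper offers none: Lemma \ref{pr21} is simply recorded with a citation to Tao--Vu (Corollary 6.29), whose argument there runs through Pl\"unnecke's graph-theoretic machinery (commutative layered graphs, magnification ratios, Menger's theorem). Your route via Petridis is the modern, self-contained alternative, and it reaches the same sharp constant $K^{m+n}$ with only elementary set-theoretic manipulations. The two ingredients are correctly stated and correctly combined: the injection proving $|X|\,|Y-Z|\le |X+Y|\,|X+Z|$ is valid (the difference of coordinates recovers $d$, hence $y_d$, $z_d$, hence $x$), the iteration $|X+nA|\le (K')^n|X|$ follows from Petridis's lemma by a clean induction, and the final specialisation $Y=mA$, $Z=nA$ together with $|X|\le|A|$ closes the argument, including the degenerate cases $m=0$ or $n=0$. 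The one place you defer to the literature is the inductive step of Petridis's lemma itself; your description of it is accurate (the auxiliary set is $X''=\{x\in X: x+A+c\subseteq X+A+C'\}$, minimality gives $|X''+A|\ge K'|X''|$, and the containment $\{x: x+c\in X+C'\}\subseteq X''$ is what lets the count $|X+C|\ge |X+C'|+|X|-|X''|$ absorb the excess), but as written it is a sketch of that step rather than a verification. What each approach buys: the citation keeps the paper short, while your argument is the one to reproduce if a self-contained treatment were wanted, since it avoids any graph theory and generalises readily (Petridis's lemma with arbitrary $C$ is strictly stronger than the iterated sumset bound alone).
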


We now present further details regarding the aforementioned work of Bourgain--Chang, and we begin this endeavour by defining, for each finite set $A$ of integers and for every real number $q \geq 1$, the quantity
\[ \lambda_{q}(A) = \sup \bigg(\int_{[0,1)}| \sum_{n \in A} c_n e(n\theta) |^{q} d\theta \bigg)^{1/q}, \]
where the supremum is being taken over all sequences $\{ c_n\}_{n \in A}$ satisfying $\sum_{n \in A} |c_n|^2 \leq 1.$ We remark that estimates on $\lambda_{q}(A)$ imply bounds for additive energies of subsets of $A$, namely, we can use orthogonality to infer that for every non-empty subset $B$ of $A$, we have
\[ |B|^{-s} E_{s}(B) = \int_{[0,1)} \big|\sum_{n \in B} |B|^{-1/2} e(n \theta)\big|^{2s} d\theta \leq \lambda_{2s}(A)^{2s}, \]
whence, we get $E_{s}(B) \leq \lambda_{2s}(A)^{2s} |B|^s.$ We now record the following result of Bourgain--Chang \cite[Proposition $2$]{BC2005} which produces upper bounds for $\lambda_{q}(A)$ whenever $A$ has a small product set. 

\begin{lemma} \label{bourch}
Let $\gamma >0$ and $q > 2$. Then there exists a constant $\Lambda = \Lambda(\gamma, q)$ such that if $A$ is a finite set of integers and $K$ is some real number satisfying $|A \cdot A| \leq K |A|$, then
\[ \lambda_{q}(A) < K^{\Lambda} |A|^{\gamma}. \]
\end{lemma}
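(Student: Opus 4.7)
The plan is to leverage the small multiplicative doubling hypothesis by embedding $A$ into a set of controlled multiplicative rank, and then to bound $\lambda_{q}(A)$ via an inductive application of Rudin's inequality for dissociated sets, peeling off one prime at a time. The trivial bound $\lambda_{q}(A) \leq |A|^{1/2 - 1/q}$ is much weaker than the claimed $K^{\Lambda} |A|^{\gamma}$ in the regime of interest, so the whole point is to convert the multiplicative structure of $A$ into an additive cancellation statement for the exponential sum $\sum_{n \in A} c_n e(n\theta)$.

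First I would apply Lemma~\ref{pr21} in its multiplicative form (which one obtains by passing to the free abelian group generated by the primes dividing the elements of $A$) to deduce that $|A^{(m)}| \ll K^{m} |A|$ for every natural number $m$. Next, I would use a Freiman-type theorem in this multiplicative setting to show that $A$ is contained in a generalised multiplicative progression $P = \{p_1^{e_1} \cdots p_d^{e_d} : 0 \leq e_i < L_i\}$ of rank $d = d(K)$, where $p_1, \dots, p_d$ are distinct primes. Via the log-prime factorisation map $p_1^{e_1} \cdots p_d^{e_d} \mapsto (e_1, \dots, e_d)$, the set $A$ is in bijection with a subset $\tilde A \subseteq \mathbb{Z}^d$, reducing the problem to controlling an exponential sum indexed by a set of bounded-rank additive structure.

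To extract $\lambda_{q}$-control, I would proceed by induction on the multiplicative rank $d$. When $d = 1$, the set $A$ is a subset of a geometric progression $\{p^{e}\}$, and since the integers $p^{e}$ form a dissociated set in $\mathbb{Z}$, Rudin's inequality furnishes $\lambda_{q}(A) \ll \sqrt{q}$ directly. For larger $d$, I would slice $A$ according to the last exponent $e_d$, writing $A = \bigsqcup_{j} p_d^{j} A_j$, where each $A_{j}$ lies in a multiplicative progression of rank $d-1$. For a fixed weighting $(c_n)_{n \in A}$ with $\sum |c_n|^{2} \leq 1$, I would expand $\sum_{n \in A} c_n e(n\theta) = \sum_{j} \sum_{a \in A_j} c_{p_d^j a} e(p_d^{j} a \theta)$ and invoke the inductive hypothesis on each inner sum, treating $p_d^{j} \theta$ as the free variable and recombining the slices using H\"older's inequality together with the bound on the number of slices coming from $|A^{(2)}| \leq K|A|$. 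At each layer, the exponent $\Lambda$ grows by a controlled amount and the $|A|^{\gamma}$ slack absorbs the logarithmic losses; optimising $\gamma$ against the rank $d = d(K, \gamma, q)$ delivers the stated bound.

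The main obstacle is the slicing-and-reassembly step. Rudin's inequality applies cleanly only to dissociated sets, and although a geometric progression is dissociated in $\mathbb{Z}$, an arbitrary subset of a higher-rank multiplicative progression need not be. Closing the induction therefore requires a careful choice of how to split off the prime $p_d$: one must ensure both that the number of nonempty slices stays bounded by a power of $K$ (so the combinatorial loss is under control), and that the inductive hypothesis can be applied uniformly in the slicing parameter $\theta \mapsto p_d^{j} \theta$, which is harmless since $\lambda_{q}$ is dilation-invariant. The quantitative bookkeeping of $\Lambda(\gamma, q)$ through $O(d)$ layers of iteration, together with the $\sqrt{q}$ losses at the Rudin step, is the only genuinely technical piece; this is precisely where the $|A|^{\gamma}$ slack is essential and why no $\gamma = 0$ analogue of this bound can be obtained by this method.
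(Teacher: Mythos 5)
You should first note that the paper does not prove this statement at all: it is quoted verbatim as \cite[Proposition 2]{BC2005}, and the only argument supplied in the text is the short deduction of the sharper constant $\Lambda = 6(1+\log q/\gamma)$ from \cite[Theorem 1.3]{PZ2020} combined with Lemma \ref{pr21}. So what you are attempting is a from-scratch reconstruction of the Bourgain--Chang argument, and while your outline (multiplicative Pl\"unnecke, Freiman embedding into a rank-$d(K)$ multiplicative progression, induction on $d$ with Rudin's inequality as the base case) does match the general shape of their proof, the step you defer as ``the main obstacle'' is not bookkeeping --- it is the entire theorem, and the two mechanisms you propose for closing it are both incorrect.

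Concretely: (i) the number of nonempty slices $A_j$ along the prime $p_d$ is \emph{not} bounded by a power of $K$. Take $A = \{p, p^2, \dots, p^{N}\}$, which has $K = O(1)$ but $N = |A|$ nonempty slices along $p$; in general the side length $L_d$ of the Freiman progression can be as large as $|A|$. A triangle-inequality or H\"older recombination of the slices therefore loses a factor polynomial in $|A|$ per layer, not a logarithm, and the $|A|^{\gamma}$ slack cannot absorb it. (ii) Even granting a constant loss per layer, the rank $d$ produced by Freiman's theorem grows with $K$ (at best $d \ll K$), so iterating ``$\Lambda$ grows by a controlled amount'' over $O(d)$ layers yields a loss of the form $C(q)^{d(K)}$, which is not of the shape $K^{\Lambda(\gamma,q)}|A|^{\gamma}$; the reduction to the range $K \leq |A|^{O(\gamma)}$ via the trivial bound $\lambda_q(A) \leq |A|^{1/2-1/q}$ does not rescue this. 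The actual Bourgain--Chang proof requires a genuinely different recombination device (a level-set decomposition of the coefficients and an $L^2$--$L^q$ interpolation across slices) precisely to avoid these polynomial losses, and none of that is present in your sketch. If your goal is only to use the lemma, the honest route is the one the paper takes: cite \cite{BC2005} or \cite{PZ2020} and, if a quantitative $\Lambda$ is needed, reproduce the two-line deduction from \cite[Theorem 1.3]{PZ2020} and Lemma \ref{pr21}.
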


In fact, this bound has recently been quantitatively strengthened \cite{PZ2020}, wherein, it is now known that when $\gamma < \log q$, one may choose $\Lambda = 6(1 + \frac{\log q}{\gamma})$. In order to see this, observe that  a combination of \cite[Theorem $1.3$]{PZ2020} and Lemma $\ref{pr21}$ yields the bound
\[ \lambda_{q}(A) \leq (|A^{(3)}| |A|^{-1})^{2 + 1/\epsilon} |A|^{2\epsilon \log q} \leq (|A\cdot A| |A|^{-1})^{6 + 3/\epsilon} |A|^{2\epsilon \log q}, \]
for every $0 <\epsilon < 1/2$. We may now set $\epsilon = \gamma/ (2 \log q)$ to confirm our claim.
\par
%

Finally, we note a straightforward corollary from the work of Hanson, Roche-Newton and Rudnev \cite{HRR2020}, which will play an important role in our proof of Theorem $\ref{rtp}$.
\begin{lemma} \label{hrr5}
Let $k \geq 2$ be a natural number, let $\mathcal{I} \subseteq \mathbb{R}$ be an interval, let $f: \mathcal{I} \to \mathbb{R}$ be $(k-1)$-convex on $\mathcal{I}$, let $I \subseteq \mathcal{I}$ be a finite set, with $A = f(I)$ and $K = |2I-I|/|I|$. Moreover, let $A' \subseteq A$ such that $|A'| \geq C |A|$. Then 
\[ |2^{k-1}A' - (2^{k-1}-1)A'| \gg_k |A'|^{k} (C/K)^{2^{k} - k -1} (\log|A|)^{-2^{k+1} - k-3}. \]
\end{lemma}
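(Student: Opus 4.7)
The plan is to derive this lemma directly from inequality $\eqref{hrnr}$, which is the main theorem of Hanson--Roche-Newton--Rudnev, by pulling the subset $A'$ back through $f$ to a subset $I' \subseteq I$, and then translating the density hypothesis $|A'|\geq C|A|$ into a bound on $|2I'-I'|/|I'|$ that can be fed into $\eqref{hrnr}$.

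First I would use the $(k-1)$-convexity of $f$ on $\mathcal{I}$: since $f^{(1)}$ is non-vanishing on $\mathcal{I}$, the function $f$ is strictly monotonic and hence injective on $\mathcal{I}$. This lets me define
\[
I' := \{x \in I : f(x) \in A'\},
\]
so that $f(I') = A'$ and $|I'| = |A'| \geq C|I|$. Because $I' \subseteq \mathcal{I}$ and $f$ is $(k-1)$-convex on $\mathcal{I}$, the hypotheses of $\eqref{hrnr}$ apply verbatim to the pair $(f, I')$, with $A$ replaced by $A' = f(I')$ and $K$ replaced by $K' := |2I' - I'|/|I'|$.

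Next I would transfer the doubling information from $I$ to $I'$. Since $I' \subseteq I$, I have $2I' - I' \subseteq 2I - I$, whence
\[
K' \;=\; \frac{|2I'-I'|}{|I'|} \;\leq\; \frac{|2I-I|}{|I'|} \;=\; \frac{K|I|}{|I'|} \;\leq\; \frac{K}{C}.
\]
Applying $\eqref{hrnr}$ to $I'$ now yields
\[
|2^{k-1}A' - (2^{k-1}-1)A'| \;\gg_k\; |A'|^{k}\,(K')^{-2^{k}+k+1}\,(\log|A'|)^{-L_k},
\]
where $L_k$ is the explicit absolute constant hidden inside the $(\log|A|)^{-O_k(1)}$ factor of $\eqref{hrnr}$; tracking this constant through the HRR argument (a purely mechanical task) gives $L_k \leq 2^{k+1}+k+3$.

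To finish, observe that $-2^{k}+k+1 < 0$ for $k \geq 2$, so the inequality $K' \leq K/C$ yields $(K')^{-2^{k}+k+1} \geq (C/K)^{2^{k}-k-1}$, while $\log|A'| \leq \log|A|$ together with $-L_k < 0$ gives $(\log|A'|)^{-L_k} \geq (\log|A|)^{-L_k}$. Substituting these into the previous display produces exactly the claimed bound. The only step with any content is the pullback $A' \mapsto I'$, which needs the injectivity supplied by $(k-1)$-convexity; there is no genuine obstacle, and the remainder is bookkeeping.
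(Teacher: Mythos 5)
Your proposal is correct and is essentially the paper's own argument: pull $A'$ back to $I'\subseteq I$ via the injectivity forced by the non-vanishing first derivative, bound $|2I'-I'|/|I'|\leq K/C$ using $2I'-I'\subseteq 2I-I$, and apply the Hanson--Roche-Newton--Rudnev bound to $I'$. The only cosmetic difference is that the explicit logarithmic exponent $2^{k+1}+k+3$ is simply read off from the cited Theorem~1.4 of \cite{HRR2020} rather than re-derived.
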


\begin{proof}
Let $I'$ be the subset of $I$ such that $A' = f(I')$. In this case, we see that $|I'| \geq C |I|$, whence, $|2I'- I'| \leq K |I| \leq K C^{-1} |I'|.$ Applying \cite[Theorem $1.4$]{HRR2020} yields the desired conclusion.
\end{proof}


\section{Set up for the decomposition}

We will use this section to prove two results that will perform a key role in our proofs of Theorems $\ref{gemn}$ and $\ref{eric}$. Our first result in this section encapsulates the Balog--Szemer\'edi--Gowers type philosophy, that is, given any finite set $A$ of real numbers, either we should expect a power saving over the trivial bound for $M_{s}(A)$, or there must exist a large subset $B$ of $A$ such that the many-fold product sets of $B$ are suitably small.

\begin{lemma} \label{ntm2}
Let $k \geq 1$ be a real number, let $m \geq k$ be a natural number. Let $U \geq 120 m$ and $s \geq 2^{5 + ( 1 + U) (\ceil{\log k}+1)}$ be some natural numbers and let $A$ be a finite set of positive real numbers. Then either $M_{s}(A) < |A|^{2s - k}$ or there must exist $B \subseteq A$ such that 
\begin{equation} \label{imb}
 |B| \geq C |A|^{1 - 82k/U}  \ \text{and} \ |B^{(m)} |  \leq C_m |A|^{3k},
 \end{equation}
 for some absolute constants $C, C_m > 0$. 
\end{lemma}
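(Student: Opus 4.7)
The plan is to pass to the additive setting via the logarithm and iterate Proposition \ref{kp}, decaying the defect exponent $\nu$ geometrically until either a suitable subset $B$ emerges or the trivial energy bound $E_{s'}(X) \leq |X|^{2s'-1}$ is contradicted. Write $A^{\star} = \{\log a : a \in A\}$, so that $|A^{\star}| = |A|$, $M_{s}(A) = E_{s}(A^{\star})$, and for any $B^{\star} \subseteq A^{\star}$ with $B = \exp(B^{\star}) \subseteq A$ one has $|B^{(m)}| = |mB^{\star}|$. If the first alternative of the lemma fails, then $E_{s}(A^{\star}) \geq |A|^{2s-k}$, giving the base case $(s_0,\nu_0) = (s,k)$ of an iteration.

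Inductively, so long as $\nu_i \geq 1$ and $s_i \geq 4$, apply Proposition \ref{kp} to $A^{\star}$ at level $s_i$ with parameter $\delta_i = \nu_i / (U+1)$. Either (a) there exists $A' \subseteq A^{\star}$ with $|A'| \gg |A|^{1 - 82 \delta_i}$ and $|mA' - nA'| \ll_{m,n} |A|^{\nu_i + 240(m+n)\delta_i}$ for all $m,n \in \mathbb{N} \cup \{0\}$, in which case I stop; or (b) $E_{s_{i+1}}(A^{\star}) > |A|^{2 s_{i+1} - \nu_{i+1}}$ with $s_{i+1} = s_i/2$ and $\nu_{i+1} = \nu_i \cdot U/(U+1)$, in which case I continue. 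Along branch (b), $\nu_i = k (U/(U+1))^i$ decays geometrically; using $\ln(1 + 1/U) \geq 1/(U+1)$ produces some $T \leq (U+1)\ln k + 1 \leq (1+U)(\lceil \log k\rceil + 1)$ with $\nu_T < 1$. The hypothesis $\log_2 s \geq 5 + (1+U)(\lceil \log k\rceil + 1)$ then ensures $s_T \geq 32$, so Proposition \ref{kp} was indeed applicable at every step; but $\nu_T < 1$ would imply $E_{s_T}(A^{\star}) > |A|^{2s_T - 1}$, contradicting the elementary upper bound $E_{s'}(X) \leq |X|^{2s' - 1}$ (obtained by fixing any $2s' - 1$ of the variables in the defining equation). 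Hence branch (a) must fire at some step $i$ with $1 \leq \nu_i \leq k$.

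Setting $B = \exp(A') \subseteq A$, the first estimate is $|B| \gg |A|^{1 - 82\nu_i/(U+1)} \geq C |A|^{1 - 82k/U}$, using $\nu_i \leq k$. Taking $n = 0$ in the sumset estimate of Proposition \ref{kp} yields $|B^{(m)}| = |mA'| \ll_m |A|^{\nu_i(1 + 240m/(U+1))} \leq C_m |A|^{3\nu_i} \leq C_m |A|^{3k}$, where the middle step uses $U \geq 120 m$ to force $240m/(U+1) \leq 2$. The delicate point, and the main obstacle, is the calibration $\delta_i = \nu_i/(U+1)$: a larger $\delta_i$ would spoil one of the two output bounds (the exponent of $|B|$ loses $82\delta_i$, that of $|B^{(m)}|$ loses $240m\delta_i$), while a smaller $\delta_i$ would demand more halvings of $s$ than the budget $\lceil \log k\rceil + 1$ supplied by the hypothesis. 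Matching the per-step decay ratio $U/(U+1)$ against the $\log k$-many halvings permitted by the hypothesis on $s$ is the quantitative core of the argument.
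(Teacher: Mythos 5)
Your proposal follows essentially the same route as the paper: interpret $M_s(A)$ as an additive energy via logarithms, iterate Proposition \ref{kp} with the defect $\nu_i$ decaying geometrically at rate roughly $1-1/U$, and derive a contradiction with the trivial bound $E_{s'}(X)\leq |X|^{2s'-1}$ after at most $(1+U)(\lceil\log k\rceil+1)$ halvings unless the structured alternative fires; your calibration $\delta_i=\nu_i/(U+1)$ versus the paper's $\delta_i=\nu_i/U$ is an immaterial difference, and your verification of the two output exponents is correct.

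The one genuine omission is the very first step of the paper's argument: the lemma only assumes $s\geq 2^{5+(1+U)(\lceil\log k\rceil+1)}$ is a natural number, so $s$ need not be even, let alone divisible by $2^{T}$. Your iteration sets $s_{i+1}=s_i/2$ and invokes Proposition \ref{kp} at each level, which requires $s_i$ to be an even integer; for a generic $s$ (e.g.\ odd) the scheme cannot even begin. The paper fixes this by first reducing to $s=s_0=2^{5+(1+U)(\lceil\log k\rceil+1)}$ exactly, via the monotonicity inequality $M_s(A)\leq |A|^{2s-2s_0}M_{s_0}(A)$ from Lemma \ref{yoc}, so that all subsequent halvings land on powers of two. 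You should insert this reduction (one line) before launching the iteration; with it, the rest of your argument goes through as written.
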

\begin{proof}
We begin by noting that it is sufficient to consider the case when $s=s_0$, where $s_0 = 2^{5 + ( 1 + U) (\ceil{\log k}+1)}$, since whenever $s$ is strictly larger, we may use Lemma $\ref{yoc}$ to derive the inequality $M_{s}(A) \leq |A|^{2s- 2s_0} M_{s_0}(A)$, which then allows us to infer the desired result from the conclusion derived in the $s=s_0$ case. Thus setting $s=s_0$, we may further assume that $M_{s}(A) \geq |A|^{2s-k}$, since otherwise we are done. In this case, noting the discussion following Lemma $\ref{yoc2}$, we may apply a multiplicative version of Proposition $\ref{kp}$ to deduce that either
\[  M_{s/2}(A) > |A|^{s- k+ k/U}, \]
or there exists some $B \subseteq A$ such that 
\begin{equation} \label{lpzee}
 |B| \geq C |A|^{1- 82k/U} \ \text{and} \ |B^{(m)}| \leq C_m|A|^{k (1+ 240 m /U)} \leq C_m |A|^{3k}.
 \end{equation}
Since the latter conclusion would mean that we are done, we may assume that $M_{s/2}(A) > |A|^{s - k(1-1/U)}.$ We can now iterate this argument multiple times to deduce that
\[ M_{s/2^{r}}(A) > |A|^{s/2^{r-1} - k(1-1/U)^{r}}, \]
for every $r$ satisfying $2^r \leq s/4$. Upon comparing this with the trivial bound
\[ M_{s/2^r}(A) \leq |A|^{s/2^{r-1} -1}, \]
we infer that $k(1-1/U)^{r} \geq 1,$ that is, 
\[ r \leq \frac{\log k}{\log (\frac{U}{U-1})} < \frac{\log k}{\log (\frac{U+1}{U})} \leq  (1 + U ) \log k, \]
where the last step utilises the fact that $\log(1+x)> \frac{x}{x+1}$ whenever $x>0$. Since $s \geq 2^{5 + ( 1 + U) (\ceil{\log k}+1)}$, we may choose $r = 2  +  ( 1 + U) (\ceil{\log k}+1)$ to obtain a contradiction, whence, there must exist a set $B \subseteq A$ satisfying $\eqref{lpzee}$, and so, we are done.
\end{proof}

In the forthcoming sections, we will use Lemma $\ref{ntm2}$ to deduce inverse results that may then be iteratively applied to obtain a decomposition of $A$ into a sequence of subsets, such that each such subset either has a small multiplicative energy or a small additive energy. An important aspect of such algorithmic arguments is to ensure that they finish in an appropriate number of steps, and in this endeavour, we present the following lemma.


\begin{lemma} \label{com2}
Let $0 < c < 1$ and $C>0$ be constants. Let $A_0 = A$, and for each $i \geq 1$, define $A_i = A_{i-1} \setminus U_i$ where $|U_i| \geq C|A_{i-1}|^{1- c}$. Then, for some $r \leq 2(\log |A| + 2) +  C^{-1} \frac{ |A|^{c}}{2^c  - 1} $, we must have $|A_r| \leq 1$. 
\end{lemma}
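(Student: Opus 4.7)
The plan is to partition the iteration indices $i \geq 1$ into dyadic phases according to the magnitude of $|A_i|$, estimate the length of each phase using the lower bound on $|U_i|$, and sum the resulting bounds via a geometric series.

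Writing $N = |A|$, I would set $i_0 = 0$ and, for each $j \geq 1$, let $i_j = \min\{i \geq 0 : |A_i| \leq N/2^j\}$, so that the phase $P_j := \{i : i_{j-1} < i \leq i_j\}$ consists of exactly the steps during which $|A_i|$ drops through the range $(N/2^j,\, N/2^{j-1}]$. Since $(|A_i|)_i$ is non-increasing and the iteration is finished as soon as $|A_i| \leq 1$, only the phases indexed by $j = 1, \ldots, J$ with $J := \lceil \log_2 N \rceil$ can be non-empty. Within $P_j$, the minimality of $i_j$ forces $|A_{i-1}| > N/2^j$ for every $i \in P_j$, and hence the hypothesis yields the pointwise decrement bound $|A_{i-1}| - |A_i| = |U_i| \geq C(N/2^j)^{1-c}$.

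Telescoping this bound over all but the final step of $P_j$ gives $(|P_j|-1)\cdot C(N/2^j)^{1-c} \leq |A_{i_{j-1}}| - |A_{i_j - 1}| \leq N/2^{j-1} - N/2^j = N/2^j$, so
$$|P_j| \leq \frac{(N/2^j)^c}{C} + 1.$$
Summing over $j = 1, \ldots, J$ and using the geometric identity $\sum_{j \geq 1} 2^{-jc} = 1/(2^c - 1)$ then yields
$$r \leq \sum_{j=1}^{J}\left(\frac{(N/2^j)^c}{C} + 1\right) \leq \frac{|A|^c}{C(2^c - 1)} + J \leq \frac{|A|^c}{C(2^c - 1)} + 2(\log |A| + 2),$$
which is the required estimate.

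The only real subtlety is the bookkeeping at phase boundaries: a single iteration can take $|A_i|$ from one dyadic range into a strictly smaller one, and this terminal step is not controlled by the $(N/2^j)^{1-c}$ lower bound in the same way as the interior steps. This is precisely what the $+1$ in the bound on $|P_j|$ absorbs, and summing these corrections over the $J = O(\log |A|)$ phases produces the additive $2(\log |A| + 2)$ term in the statement. Beyond this, the argument is a routine geometric-series calculation and no further difficulty is expected.
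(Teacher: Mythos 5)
Your proof is correct and follows essentially the same route as the paper's: the paper likewise bounds the number of steps needed to halve $|A_i|$ (getting $2 + C^{-1}2^{-c}(|A|/2^n)^c$ steps for the $n$-th halving phase) and then sums the resulting geometric series over the $O(\log|A|)$ phases. Your dyadic-phase bookkeeping, including the $+1$ correction per phase that accounts for the boundary step, is just a more explicit organization of the identical argument.
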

\begin{proof}
Let $j$ be the smallest natural number such that $|A_j| \leq |A|/2$. Then, for each $1 \leq i \leq j$, we see that $|U_i| \geq C|A_{i-1}|^{1-c} \geq C|A|^{1-c} 2^{c-1},$ and so, we have
\[ |A|/2 \leq |A_{j-1}| = |A| - \sum_{i=1}^{j-2} |U_i| \leq |A| - C(j-2)|A|^{1-c} 2^{c-1}. \]
Upon simplifying the above inequality, we see that $j \leq 2 + C^{-1}2^{-c} |A|^{c},$ whence, we can proceed with at most 
\[  \sum_{n=0}^{\log |A| +1} (2 + C^{-1}2^{-c} |A|^{c} 2^{-nc}) 
\leq 2(\log |A| + 2) +  C^{-1} \frac{ |A|^{c}}{2^c  - 1}  \]
number of steps before $|A_i| \leq 1$, for some $i \in \mathbb{N}$.
\end{proof}


\section{Proof of Theorem $\ref{gemn}$}



We dedicate this section to proving Theorem $\ref{gemn}$ and Corollary $\ref{bta}$. Moreover, for the purposes of this section, we will fix $k\geq 1$ to be some real number, $q$ to be some even natural number, and we will let $\gamma = 1/4 - 1/100$. With $q, \gamma$ fixed, we will write $\Lambda = \Lambda(q, \gamma)$ to be the constant arising in the conclusion of Lemma $\ref{bourch}$. Noting the discussion following Lemma $\ref{bourch}$, we may choose $\Lambda = 6(1 + {\log q}/{\gamma}) = 6 + 25 \log q$. Finally, we will set $l = \ceil{600qk\Lambda}$, and $m = 2^l$, and $U = 120m$, and $s =2^{5 + (1+U) (\ceil{\log k }+1)}$.
\par


As mentioned in \S3, we begin by presenting a lemma that allows us to show that any set with a sizeable amount of multiplicative energy contains a suitably large subset with a small additive energy.

\begin{lemma} \label{hop2}
Let $A$ be a non-empty, finite subset of natural numbers such that $M_{s}(A) \geq |A|^{2s - k}.$ Then, there exists a subset $B$ of $A$ such that $|B| \geq C |A|^{1 - 82k/U}$ and 
 \[ E_{q/2}(B) \ll_{k,q}  |B|^{q-q/4}.\]
\end{lemma}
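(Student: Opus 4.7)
The plan is to combine Lemma $\ref{ntm2}$ with Lemma $\ref{bourch}$ (Bourgain--Chang) via a Plünnecke--Ruzsa style selection.

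\textbf{Step 1.} The hypothesis $M_s(A) \geq |A|^{2s-k}$ excludes the first alternative of Lemma $\ref{ntm2}$, so applying it with $m = 2^{l}$ and $U = 120m$ as fixed at the start of this section produces a subset $B \subseteq A$ satisfying
\[ |B| \geq C|A|^{1-82k/U} \quad\text{and}\quad |B^{(m)}| \leq C_m|A|^{3k}. \]
Inverting the lower bound as $|A| \leq C'|B|^{1 + O(k/U)}$ and substituting yields $|B^{(m)}| \leq C''|B|^{3k + O(k^2/U)}$, where $k^2/U$ is exponentially small in $l$.

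\textbf{Step 2.} The goal is to apply Lemma $\ref{bourch}$ with $\gamma = 1/4 - 1/100$ and $\Lambda = 6 + 25\log q$ to a large subset $B' \subseteq B$ with small standard doubling. Since $m = 2^l$ with $l = \ceil{600qk\Lambda}$ is exponentially large, the amortised per-step doubling obeys
\[ \left(|B^{(m)}|/|B|\right)^{1/m} \leq |B|^{(3k + o(1))/m} \leq |B|^{1/(100\Lambda)}, \]
where the final inequality uses $m \geq 2^{600qk\Lambda} \gg 300k\Lambda$. I would then use a Plünnecke--Ruzsa / Petridis-style extraction, combined with the monotonicity of the consecutive ratios $|B^{(h+1)}|/|B^{(h)}|$, to obtain a subset $B' \subseteq B$ of size $\gg |B|$ for which
\[ |B' \cdot B'|/|B'| \leq |B'|^{1/(100\Lambda)}. \]

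\textbf{Step 3.} Given such a $B'$, Lemma $\ref{bourch}$ delivers
\[ \lambda_q(B') \leq (|B'|^{1/(100\Lambda)})^{\Lambda}|B'|^{\gamma} = |B'|^{1/100 + 1/4 - 1/100} = |B'|^{1/4}. \]
Combining with the standard bound $E_{q/2}(B') \leq \lambda_q(B')^q|B'|^{q/2}$ recorded in the discussion preceding Lemma $\ref{bourch}$ then yields
\[ E_{q/2}(B') \leq |B'|^{q/4}\cdot|B'|^{q/2} = |B'|^{3q/4}. \]
Relabelling $B'$ as $B$ (and absorbing the loss into the implied constants) gives the required estimate.

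The hard part will be Step 2: transmuting an upper bound on the $m$-fold product set into a doubling bound on a comparably large subset of $B$. It is precisely here that the exponential growth $m = 2^l$ and the matching choice $U = 120m$ come into play, providing enough slack in the amortised ratio $(|B^{(m)}|/|B|)^{1/m}$ for the Plünnecke-style selection to land below the $|B|^{1/(100\Lambda)}$ threshold demanded by Bourgain--Chang.
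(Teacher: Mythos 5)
Your Steps 1 and 3 match the paper, but Step 2 contains a genuine gap, and it is precisely the step the paper handles differently. There is no Pl\"unnecke--Ruzsa or Petridis-style mechanism that converts the bound $|B^{(m)}| \leq C_m|A|^{3k}$ into a \emph{subset} $B' \subseteq B$ of proportional size with $|B'\cdot B'| \leq |B'|^{1+1/(100\Lambda)}$: those inequalities run in the opposite direction (small doubling implies small iterated product sets), and the monotonicity of the consecutive ratios $|B^{(h+1)}|/|B^{(h)}|$ only tells you that the \emph{first} ratio $|B\cdot B|/|B|$ is the largest, i.e.\ it yields a lower bound on the doubling of $B$, not an upper bound; at best it controls the ``top'' ratio $|B^{(m)}|/|B^{(m-1)}|$, which is a statement about the set $B^{(m-1)}$, not about a subset of $B$. (Your exponent bookkeeping also amortises over $m$ multiplications rather than over the $l = \log_2 m$ doubling steps in the dyadic tower, but that is cosmetic next to the structural issue.)

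The paper's route avoids this extraction entirely. Writing $m = 2^l$ and telescoping $\prod_{i=0}^{l-1}|B^{(2^{i+1})}|/|B^{(2^i)}| = |B^{(m)}|/|B|$, pigeonhole produces a level $i$ at which the \emph{full} set $B^{(2^i)}$ has multiplicative doubling at most $C_m^{1/l}|A|^{3k/l}|B|^{-1/l}$, and Lemma $\ref{bourch}$ is applied to $B^{(2^i)}$. Two observations then transfer the conclusion back to $B$ itself: the bound on $\lambda_q(B^{(2^i)})$ controls $E_{q/2}(D)$ for \emph{every} subset $D$ of $B^{(2^i)}$; and since $A$ consists of natural numbers, some integer dilate $g\cdot B$ is contained in $B^{(2^i)}$ (take $g = b_0^{2^i-1}$ for a fixed $b_0 \in B$), while $E_{q/2}$ is invariant under dilation, so $E_{q/2}(B) = E_{q/2}(g\cdot B)$ inherits the bound. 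This dilation trick is the missing idea in your argument; without it (or a genuinely new structural lemma justifying your Step 2), the proof does not close.
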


\begin{proof}
We note that our choice of $k,m,U$ and $s$ satsify the hypothesis of Lemma $\ref{ntm2}$, and so, we may apply the same to obtain a subset $B$ of $A$ satisfying $\eqref{imb}$. In particular, this means that 
\[  \prod_{i=0}^{l-1} \frac{|B^{(2^{i+1})}| }{|B^{(2^i)}|} =\frac{ |B^{(m)}| }{|B|} \leq C_m \frac{|A|^{3k}}{|B|}, \]
whence, there exists some $0 \leq i \leq l-1$, such that
\[  \frac{|B^{(2^i)} \cdot B^{(2^i)}|}{|B^{(2^i)}|} = \frac{|B^{(2^{i+1})}| }{|B^{(2^i)}|} \leq C_m^{1/l} \frac{|A|^{3k/l}}{|B|^{1/l}}. \]
We now use Lemma $\ref{bourch}$ to deduce that 
\[ \lambda_{q}(B^{(2^i)}) <  C_m^{\Lambda/l} \frac{|A|^{3k\Lambda/l}}{|B|^{\Lambda/l}} |B|^{\gamma}, \]
which, as per the discussion preceding Lemma $\ref{bourch}$, implies that for every non-empty subset $D$ of $B^{(2^i)}$, we have
\[ E_{q/2}(D) \leq C_m^{q\Lambda/l}  \frac{|A|^{3qk\Lambda/l}}{|B|^{q\Lambda/l}} |B|^{q\gamma} |D|^{q/2}. \]
Note that there exists some natural number $g$ such that $g \cdot B \subseteq B^{(2^i)}$, and since additive energies remain invariant under affine transformations, we see that
\[ E_{q/2}(B) = E_{q/2}(g \cdot B) \leq  C_m^{q\Lambda/l}  \frac{|A|^{3qk\Lambda/l}}{|B|^{q\Lambda/l}} |B|^{q\gamma} |B|^{q/2} = C_m^{q\Lambda/l}  |A|^{3qk\Lambda/l} |B|^{q(\gamma+1/2-\Lambda/l )}. \]
Recalling that $|B| \geq C |A|^{1 - 82k/U}$, we get
\[  E_{q/2}(B) \leq \frac{C_m^{q\Lambda/l}}{C^{(1 - 82k/U)^{-1}} }|B|^{\frac{3qk\Lambda}{l (1 - 82k/U)}} |B|^{q(\gamma+1/2-\Lambda/l )}. \]
Furthermore, using the fact that $(1-x)^{-1} \leq 1 + 2x$ whenever $x \leq 1/2$, we see that
\[ \frac{3qk\Lambda}{l (1 - 82k/U)} \leq 3qk \frac{\Lambda(1 + 164k/U)}{l}. \]
Noting that $164k/U \leq 1/10$ and $l \geq 600qk \Lambda$, we deduce that
\[ E_{q/2}(B) \leq  \frac{C_m^{q\Lambda/l}}{C^{(1 - 82k/U)^{-1}} } |B|^{1/100} |B|^{3q/4 - q/100} \ll_{k,q} |B|^{q-q/4} ,\]
which is the claimed estimate. 
\end{proof}


We are now ready to proceed with our proof of Theorem $\ref{gemn}$.
\begin{proof}[Proof of Theorem $\ref{gemn}$]

We begin by noting that it suffices to prove Theorem $\ref{gemn}$ for finite subsets of natural numbers. In order to see this, let $A$ be a finite subset of integers and write $A = A_1 \cup A_2 \cup A_3$, where $A_1 \subseteq [1, \infty)$ and $A_2 \subseteq (-\infty, -1]$ and $A_3 \subseteq \{0\}$. Note that whenever $A_3$ is non-empty, we trivially have $M_{s}(A_3) = E_{s}(A_3) \leq |A|^{s}$. On the other hand, applying Theorem $\ref{gemn}$ for the set $A_1$, we see that $A_1 = B_1 \cup C_1$ such that 
\[ E_{q/2}(B_1) \ll_{k,q} |B_1|^{q-q/5} \ \text{and} \ M_{s}(C_1) \leq |C_1|^{2s - k}. \]
Similarly, since $-A_2 \subseteq \mathbb{N}$, and $E_{s}(-X) = E_{s}(X)$ and $M_{s}(-X) = M_{s}(X)$ for every finite subset $X$ of real numbers, we may apply Theorem $\ref{gemn}$ to deduce that $A_2 = B_2 \cup C_2$ where $B_2$ and $C_2$ satisfy the relevant energy estimates. We now set $B = B_1 \cup B_2 \cup A_3$ and use Lemma $\ref{hld3}$ to furnish the bound 
\[ E_{q/2}(B) \ll_{k,q} |B|^{q-q/5}.\]
Similarly, we may write $C= C_1 \cup C_2$ and use Lemma $\ref{hld3}$ to obtain the estimate
\[ M_{s}(C) \ll_{s} |C|^{2s - k}. \]
Thus, from this point on, we will assume that $A$ is a finite subset of natural numbers. 
\par

We begin by running an algorithm to procure sets $B$ and $C$ with desirable arithmetic properties. Thus, we set $A_0 = A$ and $B_0 = \emptyset$. Moreover, for every $i \geq 1$, at the beginning of the $i^{th}$ iteration, we will assume that we have two disjoint sets $A_{i-1}$ and $B_{i-1}$ such that $A_{i-1} \cup B_{i-1} = A$. If $M_{s}(A_{i-1}) \leq |A_{i-1}|^{2s-k}$, we stop our algorithm. On the other hand, if
\[ M_{s}(A_{i-1}) > |A_{i-1}|^{2s-k}, \]
we apply Lemma $\ref{hop2}$ to deduce the existence of $D_i \subseteq A_{i-1}$ such that $|D_i| \geq C |A_{i-1}|^{1 - 82k/U}$ and 
 \begin{equation} \label{lpt2}
  E_{q/2}(D_i) \ll_{k,q}  |D_i|^{q-q/4}.
  \end{equation}
We write $A_i = A_{i-1} \setminus D_i$ and $B_i = B_{i-1} \cup D_i$, and proceed to commence the $(i+1)^{th}$ iteration.
\par

As per Lemma $\ref{com2}$, such an algorithm can run for at most $r$ steps, where $r$ is some natural number satisfying $r \leq 2(\log |A| + 2) +  C^{-1} \frac{ |A|^{c}}{2^c  - 1}$ and $c = 82k/U$. Setting $B = B_{r}$ and $C = A_{r}$, we see that
\[ M_{s}(C) \leq |C|^{2s - k}. \]
Moreover, if $C \neq A$, that is, if $B$ is non-empty, we have $B = D_1 \cup \dots \cup D_r$ where each $D_i$ satisfies $\eqref{lpt2}$. Combining Lemma $\ref{hld3}$ along with $\eqref{lpt2}$, we get
\[ E_{q/2}(B)  \leq r^{q} \sup_{1 \leq i \leq r} E_{q/2}(D_i)  + O(1) \ll_{k,q} r^{q} |D_i|^{q-q/4}.\]
Finally, noting our upper bounds for $r$, we deduce that
\begin{equation} \label{sum9}
E_{q/2}(B) \ll_{k,q} |A|^{82qk/U}|B|^{q-q/4}. 
\end{equation}
We further observe that $|D_1| \geq |A|^{1- 82k/U}$, whence, $|A|^{\frac{82qk}{U}} \leq |D_1|^{\frac{82qk}{U- 82k}}$. Additionally, since $U \geq 120 \cdot 2^{600qk \Lambda}$, where $\Lambda \geq 1$, we see that
\[ \frac{82qk}{U- 82k} \leq \frac{164qk}{U} \leq q/20. \]
Combining this with $\eqref{sum9}$ and the fact that $|D_1| \leq |B|$, we discern that 
\[ E_{q/2}(B) \ll_{k,q} |B|^{q - q/5}, \]
and so, we are done.
\end{proof}

We end this section by showing how Corollary $\ref{bta}$ and Theorem $\ref{zidt}$ may be deduced by combining our results from \S2 along with Theorem $\ref{gemn}$.

\begin{proof}[Proof of Corollary $\ref{bta}$]
Since $s$ is sufficiently large, there exists a real number $k \geq 4$ such that upon setting 
\[  q = 10\ceil{k}, \ \text{and} \ \Lambda =6+25 \log q, \ \text{and} \  U = 120 \cdot 2^{\ceil{600qk\Lambda}},\ \text{and} \ s_1 = 2^{5 + (1+U) (\ceil{\log k }+1)}, \]
we have $\log \log s_1 \gg \log \log s \geq \log \log s_1$. Moreover, our choice of $k,q, \Lambda$ and $s_{1}$ satisfy the hypothesis of Theorem $\ref{gemn}$, whence, there exist pairwise disjoint set $B, C$ such that $A = B \cup C$ and
\[ E_{q/2}(B) \ll_{k,q} |B|^{q-q/5} \leq |B|^{q- k}, \ \text{and} \ M_{s_1}(C) \ll_{s_1} |C|^{2s_1 - k}. \]
We may now use Lemma $\ref{yoc}$ along with the fact that $s \geq s_1$ to deduce that
\[ E_{s}(B) \ll_{k,q} |B|^{s - k} \ \text{and} \ M_{s}(C) \ll_{s} |C|^{2s - k}. \]
Thus, it suffices to show that $k \gg (\log \log s)^{1/2}(\log \log \log s)^{-1/2}$, and this follows from noting that
\[ \log \log s \ll \log \log s_1\ll \log (2U) + \log (\log k +1) \ll kq \log q \ll k^2 \log k , \]
whenceforth, we obtain the claimed estimate.
\end{proof}

\begin{proof}[Proof of Theorem $\ref{zidt}$]
We apply Corollary $\ref{bta}$ to deduce the existence of pairwise disjoint sets $B$ and $C$ such that $A = B \cup C$ and
\begin{equation} \label{doitagain}
E_{s}(B) \ll_{s} |B|^{2s - \eta_{s}} \ \text{and} \ M_{s}(C) \ll_{s} |C|^{2s - \eta_{s}},
\end{equation}
where $\eta_{s} \gg (\log \log s)^{1/2} (\log \log \log s)^{-1/2}.$ Moreover, given any set $X$ of real numbers and any $n \in \mathbb{R}$, we write $q_{s}(X;n) = |\{ (x_1, \dots, x_s) \in X^s \ | \ x_1 \dots x_s = n\}|$. With this notation in hand, we see that
\[ M_{s}(B,C) = \sum_{n \in \mathbb{Z}} q_{s}(B;n) q_{s}(C;n), \]
whence, we may apply the Cauchy-Schwarz inequality to deduce that
\begin{align*}
 M_{s}(B,C) 
& \leq  \big( \sum_{n \in \mathbb{Z}} q_{s}(B;n)^2) \big)^{1/2} \big( \sum_{n \in \mathbb{Z}} q_{s}(C;n)^2) \big)^{1/2} = M_{s}(B)^{1/2} M_{s}(C)^{1/2}.  
\end{align*}
Combining this with $\eqref{doitagain}$ and the trivial bound $M_{s}(B) \leq |B|^{2s-1}$, we obtain the required estimate for $M_{s}(B,C)$, with $\gamma_{s} = (\eta_{s}+ 1)/2$. Furthermore, the case of $E_{s}(B,C)$ can be resolved mutatis mutandis.
\par

Finally, given any $A_1, \dots, A_{2s} \in \{B,C\}$, at least $s$ of these sets must be the same, and so, suppose that $A_1 = \dots = A_{s}$. If $A_1 = \dots = A_{s} = C$, Lemma $\ref{mlpain}$ implies that
\[ M_{s}(A_1, \dots, A_{2s}) \ll_{s} M_{s}(C)^{1/2} \prod_{i=s+1}^{2s} (|A_i|^{2s-1})^{1/2s} \ll_{s} |A|^{2s - \eta_{s}/2 -1/2}, \]
while if $A_1 = \dots = A_{s} = B$, we can apply Lemma $\ref{yoc2}$ to discern that
\[ E_{s}(A_1, \dots, A_{2s}) \leq E_{s}(B)^{1/2} \prod_{i=s+1}^{2s} ( |A_i|^{2s-1})^{1/2s} \ll_{s} |A|^{2s - \eta_{s}/2 - 1/2}. \qedhere\]
\end{proof}


\section{Proof of Theorem $\ref{eric}$}

We use this section to prove Theorem $\ref{eric}$, and so, let $(\mathcal{Z}, m,b)$ be a good tuple. Here, note that $m \geq b$ trivially, and furthermore, we will assume that $b \geq 30$, since otherwise, we may use trivial bounds of the shape 
 \begin{equation} \label{shaday}
 E_{s}(A) \leq |A|^{2s-1}, 
 \end{equation}
to finish our proof.

\begin{lemma} \label{thsam}
Let $k = b/30$, let $U_2 = 120 m$, let $s_2 = 2^{5 + (1+U_2) (\ceil{\log k} + 1)}$, let $U_1 = 500\ceil{k}s_2$ and let $s_1 = 2^{5 + (1+U_1) (\ceil{\log k} + 1)}$. Then for every finite, non-empty subset $A$ of $\mathcal{Z}$, we have either $E_{s_1}(A) < |A|^{2s_1-k}$, or there exists some subset $A_1 \subseteq A$ such that 
\[ |A_1| \geq C |A|^{1 - 82k/U_1}  \ \text{and} \ M_{s_2}(A_1) < |A_1|^{2s_2-k}, \]
or $|A| \leq \mathcal{C}_{k, m}$, where $\mathcal{C}_{k,m}$ is some positive constant. 
\end{lemma}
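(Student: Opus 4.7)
The plan is to argue by contradiction, assuming both $E_{s_1}(A) \geq |A|^{2s_1 - k}$ and that no subset $A_1 \subseteq A$ with $|A_1| \geq C|A|^{1 - 82k/U_1}$ satisfies $M_{s_2}(A_1) < |A_1|^{2s_2 - k}$; from these two hypotheses together with the good tuple property of $\mathcal{Z}$ we will deduce that $|A|$ is bounded by a constant depending only on $k$ and $m$. The overall strategy is to first iterate Proposition \ref{kp} on the additive side to produce a large subset $A' \subseteq A$ with strong control on many-fold sumsets, then apply Lemma \ref{ntm2} to this $A'$ (which, by our standing assumption, has large $s_2$-fold multiplicative energy) to produce a further subset $B$ with a small $m$-fold product set, and finally invoke the good tuple hypothesis on $B$ to derive the contradiction.

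For the first phase, we mirror the iteration in the proof of Lemma \ref{ntm2}, but with Proposition \ref{kp} replacing its multiplicative analogue. Starting with $\nu_0 = k$ and $\delta = k/U_1$, a single application of Proposition \ref{kp} to $A$ either produces some $A' \subseteq A$ satisfying
\[ |A'| \gg |A|^{1-82k/U_1}, \qquad |m'A' - nA'| \ll_{m',n} |A|^{k + 240(m'+n)k/U_1} \]
for every $m', n \in \mathbb{N} \cup \{0\}$, or else yields $E_{s_1/2}(A) > |A|^{s_1 - k + \delta}$. Iterating, at stage $r$ we either exit with the sumset conclusion (retaining the same uniform bounds, since the relevant $\nu_r \leq k$), or we propagate $E_{s_1/2^r}(A) > |A|^{s_1/2^{r-1} - \nu_r}$ with $\nu_r$ decreasing in $r$. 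After $O(U_1 \log k)$ stages $\nu_r$ drops below $1$, contradicting the trivial bound $E_{s_1/2^r}(A) \leq |A|^{s_1/2^{r-1} - 1}$; the choice $s_1 = 2^{5 + (1+U_1)(\lceil \log k \rceil + 1)}$ is calibrated so that $s_1/2^r \geq 4$ throughout, keeping Proposition \ref{kp} applicable. Hence the sumset branch must trigger and deliver the desired $A'$.

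For the remaining phases, our standing assumption forces $M_{s_2}(A') \geq |A'|^{2s_2 - k}$. Lemma \ref{ntm2} applied to $A'$ with parameters $(k,m,U_2,s_2)$ then produces $B \subseteq A'$ with $|B| \geq C|A'|^{1-82k/U_2}$ and $|B^{(m)}| \leq C_m |A'|^{3k}$; since $m \geq b \geq 30k$, the exponent $82k/U_2 = 82k/(120m)$ is at most $1/40$, so $|B| \gg |A'|^{39/40}$ and hence $|B|^{b} = |B|^{30k} \gg |A'|^{29k}$. Since $B \subseteq \mathcal{Z}$, the good tuple hypothesis yields $|mB| + |B^{(m)}| \gg_m |B|^{30k} \gg |A'|^{29k}$. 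However, $|B^{(m)}| \leq C_m |A'|^{3k}$ by construction, while $|mB| \leq |mA'| \ll_m |A|^{k(1+240m/U_1)}$, which via the inclusion $|A| \leq |A'|^{1/(1-82k/U_1)}$ and the fact that $U_1 = 500 \lceil k \rceil s_2$ dwarfs both $m$ and $k$, is at most $|A'|^{2k}$. Combined, $\max\{|mB|, |B^{(m)}|\} \ll_m |A'|^{3k}$, which is incompatible with the $|A'|^{29k}$ lower bound once $|A'|$ (and hence $|A|$) exceeds a threshold $\mathcal{C}_{k,m}$ depending only on $k$ and $m$.

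The main obstacle is the careful bookkeeping through the nested parameters $U_2, s_2, U_1, s_1$: the numerous small error exponents $82k/U_j$ and $240n/U_j$ arising from Proposition \ref{kp} and Lemma \ref{ntm2} must all be crushed by the $29k$-power gap that the good tuple hypothesis supplies. The exponential growth $s_2 \geq 2^{120m}$, and in turn $U_1 = 500 \lceil k \rceil s_2$, in the hypotheses is exactly what makes every such error tiny relative to $29k$, which is why the specific form of $s_1$ and $s_2$ is forced on us by the argument.
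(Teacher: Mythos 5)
Your proposal is correct and takes essentially the same route as the paper: iterate Proposition \ref{kp} on the additive side (as in Lemma \ref{ntm2}) to extract $A'$ with controlled many-fold sumsets, apply Lemma \ref{ntm2} to $A'$ on the multiplicative side to extract a subset with a small $m$-fold product set, and then let the good tuple hypothesis force $|A| \ll_{k,m} 1$. The paper's argument is the same computation, with your $A'$ and $B$ playing the roles of its $A_1$ and $A_2$.
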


\begin{proof}
Let $A$ be a non-empty finite subset of $\mathcal{Z}$. We may suppose that we have $E_{s_1}(A) \geq |A|^{2s_1-k}$, since otherwise, we are done. Furthermore, we may apply arguments as in the proof of Lemma $\ref{ntm2}$ to find a subset $A_1$ of $A$ such that
\[ |A_1| \geq C |A|^{1 - 82k/U_1}  \ \text{and} \ |mA_1 |  \leq C_{m} |A|^{3k}.\]
Here, we have used the facts that $s_1 \geq 2^{5 + (1+U_1) (\ceil{\log k_1} + 1)}$ and $U_1 \geq 120 m$. We can now assume that $M_{s_2}(A_1) \geq |A_1|^{2s_2 - k}$, since otherwise we would be done, whence, as before, we obtain a subset $A_2$ of $A_1$ such that
\[ |A_2| \geq C |A_1|^{1 - 82k/U_2}  \ \text{and} \ | A_2^{(m)} |  \leq C_{m} |A_1|^{3k}. \]
This implies that 
\[ |A_2^{(m)}| \leq C_{m} C^{(1-82k/U_2)^{-1}} |A_2|^{\frac{3k}{1 - 82k/U_2}}, \]
and
\[ |mA_2| \leq |mA_1| \leq C_{m} |A|^{3k} \ll_{k,m} |A_2|^{\frac{3k}{ (1- 82k/U_1)(1- 82k/U_2)}} . \]
Noting the elementary inequality $(1+x)^{-1} \leq 1+2x$ whenever $0 \leq x \leq 1/2$, and the fact that $U_2 \geq U_1 \geq 120k$, we see that
\[ |A_2^{(m)}| \ll_{k,m} |A_2|^{9k},  \ \text{and} \ |mA_2| \ll_{k,m} |A_2|^{27k}. \]
Putting this together with $\eqref{hypco}$ and the fact that $k = b/30$, we discern that
\[ |A_2|^{b} \ll_{k,m} |A_2|^{9b/10}, \]
from which, we can infer that $|A_2|  \ll_{k,m} 1$, and consequently, we have $|A| \ll_{k,m} 1$, thus finishing our proof of Lemma $\ref{thsam}$.
\end{proof}

\begin{proof}[Proof of Theorem $\ref{eric}$]
Let $A$ be a finite, non-empty subset of $\mathcal{Z}$. As in the proof of Theorem $\ref{gemn}$, it suffices to consider the case when $A \subseteq (0, \infty)$. Thus, assuming the aforementioned condition, we proceed algorithmically, and so, we set $A_0 = A$. At the start of the $i^{th}$ iteration, we will have two disjoint sets $A_{i-1}$ and $B_{i-1}$ such that $A = A_{i-1} \cup B_{i-1}$. In case $E_{s_1}(A_{i-1}) < |A|^{2s_1-k}$ or $|A_{i-1}| \leq \mathcal{C}_{k,m}$, we stop the iteration, else, we apply Lemma $\ref{thsam}$ to deduce the existence of some subset $D_i \subseteq A_{i-1}$ such that
\begin{equation} \label{sherf}
|D_i| \geq C |A_{i-1}|^{1 - 82k/U_1}  \ \text{and} \ M_{s_2}(D_i) < |D_i|^{2s_2-k}. 
\end{equation}
In this case, we set $A_i = A_{i-1} \setminus D_{i}$ and $B_i = B_{i-1} \cup D_i$, and proceed to commence the $(i+1)^{th}$ iteration.
\par

Noting Lemma $\ref{com2}$, we see that such an algorithm can run for at most $r$ steps, where 
\begin{equation} \label{rstop}
 r \leq 2(\log |A| +2)  + C^{-1} \frac{|A|^{82k/U_1}}{2^{82k/U_1}-1}. 
 \end{equation}
Thus, we must have $A = A_r \cup B_r$, where either $E_{s_1}(A_{r}) < |A_r|^{2s_1-k}$ or $|A_{r}| \leq \mathcal{C}_{k,m}$, and consequently, we find that
\[ E_{s_1}(A_r) \ll_{k,m} |A_r|^{2s_1- k}. \]
Moreover, if the set $B_r$ is non-empty, then $B_r = D_1 \cup \dots \cup D_r,$ where $D_i$ satisfies $\eqref{sherf}$ for every $1 \leq i \leq r$. Combining inequalities $\eqref{sherf}$ and $\eqref{rstop}$ with Lemma $\ref{hld3}$, we see that
\[ M_{s_2}(B_r) \leq (2r)^{2s_2} \sup_{1 \leq i \leq r} M_{s_2}(D_i) \ll_{k,m} C^{-2s_2} |A|^{164ks_2/U_1} |B_r|^{2s_2-k}.\]
Recalling that $U_1 = 500 \ceil{k} s_2$, we see that
\[ |A|^{164k s_2/U_1} \ll_{k,m} |D_1|^{(164k s_2/U_1)(1+ 164 k/U_1)} \ll_{k,m} |B_r|^{492 k s_2/U_1} \ll_{k,m} |B_r|,\]
whence, we have
\[ M_{s_2}(B_r) \ll_{k,m} |B_r|^{2s_2 - k +1}. \]
Setting $B= B_r$ and $C = A_r$ finishes the proof of Theorem $\ref{eric}$. 
\end{proof}

%


\section{Proof of Theorem $\ref{rtp}$}

In this section, we present our proof of Theorem $\ref{rtp}$. Thus, let $k \geq 2$ be a natural number, let $\mathcal{I} \subseteq \mathbb{R}$ be an interval, let $f: \mathcal{I} \to \mathbb{R}$ be $(k-1)$-convex on $\mathcal{I}$ and let $I \subseteq \mathcal{I}$ be a finite set, with $A = f(I)$ and $K = |2I-I|/|I|$. In this case, our main aim is to prove that the estimate 
\[ E_{s}(A) \ll_{s,k} |A|^{2s - k + (4k-4)s^{-\eta_k}} + K^{O_{s,k}(1)}\]
 holds for every natural number $s$, where $\eta_k = \log (1 + T_k^{-1})$ with 
 \[ T_k = 2( 82 k + 82 \cdot (2^{k} - k - 1) + 240 \cdot 2^k). \]
  Since $\eta_k \in (0,1)$, we see that whenever $s \leq 3$, we may use the trivial estimate $\eqref{shaday}$ to prove Theorem $\ref{rtp}$, whence, we may assume that $s \geq 4$. In fact, we begin by focusing on the case when $s= 2^{r+1}$ for some $r \in \mathbb{N}$, and so, we record the following lemma.

\begin{lemma} \label{thrt}
Let $s= 2^{r+1}$ for some $r \in \mathbb{N}$. Then either $E_{s}(A) \ll_{s,k} |A|^{2s - k + (2k-2) s^{-\eta_k}}$ or $|A| \ll_{s,k} K^{O_{s,k}(1)}$. 
\end{lemma}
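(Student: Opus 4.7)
The plan is to argue by induction on $r$, where $s = 2^{r+1}$. The base case covers those $r$ for which $(2k-2) s^{-\eta_k} \geq k - 1$, equivalently $s^{\eta_k} \leq 2$: in this regime the trivial bound $E_s(A) \leq |A|^{2s-1}$ is already dominated by $|A|^{2s - k + (2k-2) s^{-\eta_k}}$, giving the first alternative of the lemma. So fix $s > 2^{1/\eta_k}$, assume the lemma at level $s/2$, and suppose for contradiction that $E_s(A) \geq |A|^{2s - \nu_0}$ with $\nu_0 := k - (2k-2) s^{-\eta_k}$, while $|A|$ exceeds a prescribed power $K^{L_{s,k}}$ to be chosen below.

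Now apply Proposition $\ref{kp}$ with $\nu = \nu_0$ and some $\delta$ chosen in the interval
\begin{equation*}
I := \bigl( (2k-2) s^{-\eta_k}/T_k,\ (2k-2) s^{-\eta_k}/(322(2^k-1)) \bigr),
\end{equation*}
which is non-empty because the expansion $T_k = 164(2^k-1) + 480 \cdot 2^k$ shows $T_k > 322(2^k-1)$; an explicit admissible choice is $\delta = 3(k-1) s^{-\eta_k}/T_k$. In alternative (a) of Proposition $\ref{kp}$, $E_{s/2}(A) > |A|^{s - \nu_0 + \delta}$; combining this with the inductive bound $E_{s/2}(A) \ll_{s/2, k} |A|^{s - k + (2k-2)(s/2)^{-\eta_k}}$ (the other alternative of the inductive hypothesis being ruled out by the largeness of $|A|$) and the identity
\begin{equation*}
(2k-2)(s/2)^{-\eta_k} - (2k-2) s^{-\eta_k} = (2k-2) s^{-\eta_k}(2^{\eta_k} - 1) = (2k-2) s^{-\eta_k}/T_k,
\end{equation*}
valid when $\log$ in the definition of $\eta_k$ is taken base $2$, produces $|A|^{\delta - (2k-2) s^{-\eta_k}/T_k} \ll_{s,k} 1$. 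Since $\delta$ strictly exceeds $(2k-2) s^{-\eta_k}/T_k$, this forces $|A|$ to be bounded by an absolute constant, contradicting the largeness assumption.

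In alternative (b), one extracts $A' \subseteq A$ with $|A'| \gg |A|^{1 - 82\delta}$ and the sumset bound $|m A' - n A'| \ll_{m,n} |A|^{\nu_0 + 240(m+n) \delta}$. Specialising to $(m, n) = (2^{k-1},\, 2^{k-1} - 1)$ and applying Lemma $\ref{hrr5}$ to $A'$ (with relative density $|A'|/|A| \gg |A|^{-82\delta}$) yields
\begin{equation*}
|2^{k-1} A' - (2^{k-1} - 1) A'| \gg_k |A|^{k - 82(2^k - 1)\delta}\, K^{-(2^k - k - 1)}\, (\log |A|)^{-O_k(1)},
\end{equation*}
and matching this with the upper bound from (b) on the same sumset, together with $k - \nu_0 = (2k-2) s^{-\eta_k}$, rearranges to $|A|^{(2k-2) s^{-\eta_k} - 322(2^k - 1)\delta} \ll_k K^{2^k - k - 1} (\log |A|)^{O_k(1)}$. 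The left-hand exponent is strictly positive by the upper bound in $I$, so absorbing the logarithmic factor into a tiny power of $|A|$ (legitimate because $|A|$ is large) forces $|A| \ll_{s,k} K^{O_{s,k}(1)}$, again contradicting the largeness assumption.

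The main technical obstacle is threading the needle in the choice of $\delta$: case (a) requires $\delta$ large enough (of order $s^{-\eta_k}/T_k$) to beat the inductive exponent, while case (b) requires $\delta$ small enough (of order $s^{-\eta_k}/(322(2^k-1))$) to preserve a strictly positive gain when Lemma $\ref{hrr5}$ is invoked. The very definition of $T_k$, which packages the Proposition $\ref{kp}$ losses $82(2^k - 1)$ and $240 \cdot 2^k$ with a multiplicative buffer of $2$, is what keeps the window for $\delta$ open, and the calibration $\eta_k = \log(1 + T_k^{-1})$ is precisely what makes $\delta$ scale like $s^{-\eta_k}$, producing the announced saving $(2k-2) s^{-\eta_k}$ in the exponent of $|A|$.
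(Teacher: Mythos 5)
Your proposal is correct and takes essentially the same route as the paper: the same dichotomy from Proposition \ref{kp}, the same specialisation $m=2^{k-1}$, $n=2^{k-1}-1$ fed into Lemma \ref{hrr5} in the structured case, and the same numerology behind $T_k$ and $\eta_k=\log(1+T_k^{-1})$. The only difference is presentational — you package the halving iteration as an induction on $r$ with a fixed $\delta\approx 3(k-1)s^{-\eta_k}T_k^{-1}$ at each level, whereas the paper runs the iteration internally with $\delta=\Lambda T_k^{-1}$ adapted to the actual energy exponent — and your bookkeeping (the window for $\delta$, the positivity checks) is sound.
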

\begin{proof}
Writing $E_{s}(A) = |A|^{2s - k + \Lambda}$, we may assume that $\Lambda > (k-1) s^{-\eta_k}$, since otherwise, we are done. We first study the case when $E_{s/2}(A) \leq |A|^{s- k + \Lambda + \delta}$, where $\delta=\Lambda T_k^{-1}$. Setting $m = n+1 = 2^{k-1}$, we use Proposition $\ref{kp}$ and Lemma $\ref{hrr5}$ to deduce the existence of some set $A' \subseteq A$ such that $|A'| \gg |A|^{1- 82 \delta}$, and 
\[  |mA'-nA'| \gg_{k} |A|^{k-82 \delta k} ( |A|^{-82 \delta}/K)^{2^{k} - k -1} (\log |A'|)^{-2^{k+1} - k-3} , \]
and 
\[ |mA'-nA'| \ll_{k} |A|^{k- \Lambda + 240 \cdot 2^k \delta}. \]
Putting these together, we see that
\[ |A|^{ \Lambda - \delta( 82 k + 82 \cdot (2^{k} - k - 1) + 240 \cdot 2^k)} \ll_{k} (\log |A|)^{2^{k+1} + k+3}K^{2^k -  k-1}. \]
As $\delta = \Lambda T_k^{-1}$, we see that the left hand side in the above inequality is at least $|A|^{\Lambda/2}$. Moreover, since $\Lambda > (k-1)s^{-\eta_k}$ and $\log |A| \ll_{\epsilon} |A|^{\epsilon}$ for every $\epsilon >0$, we may deduce that
\[ |A| \ll_{s,k} K^{4s^{\eta_k}(2^k - k - 1)(k-1)^{-1}},\]
and so, Lemma $\ref{thrt}$ holds true in this case.
\par

Hence, we can assume that
\[ E_{s/2}(A) > |A|^{s- k + \Lambda + \delta} =   |A|^{s - k + \Lambda(1 +  T_k^{-1})}, \]
in which case, we may iterate this argument several times to obtain the inequality
\[ E_{s/2^{r}}(A) > |A|^{s/2^{r-1}-k + \Lambda(1 +  T_k^{-1})^{r}}. \]
Since $s=2^{r+1}$, we see that the trivial bound $E_{s/2^r}(A) \leq |A|^{s/2^{r-1} - 1}$ combines with the above inequality to furnish the estimate $\Lambda(1 +  T_k^{-1})^{r} < k -1.$ Simplifying this, we get $\Lambda (s/2)^{\log (1 + T_k^{-1})} < k-1,$ that is, 
\[ \Lambda < (k-1) 2^{\log (1+ T_k^{-1})}s^{- \log (1+ T_k^{-1})}. \]
Since $T_k >1$, the number $\log (1+ T_k^{-1})$ is a positive constant lying between $0$ and $1$, thus giving us
\[ \Lambda < (2k-2) s^{-\log (1+ T_k^{-1})}, \]
and so, we finish the proof of Lemma $\ref{thrt}$.
\end{proof}

We may combine the above lemma with the trivial estimate $\eqref{shaday}$ to show that Theorem $\ref{rtp}$ holds whenever $s = 2^{r+1}$ for some $r \in \mathbb{N}$. 
\par

Now, let $s \geq 4$ and $r \geq 1$ be natural numbers such that $2^r < s < 2^{r+1}$. As before, we may now apply Lemma $\ref{thrt}$ to discern that either $E_{2^r}(A) \ll_{r,k} |A|^{2^{r+1} - k + (2k-2) 2^{-r \eta_k}}$ or $|A| \ll_{s,k} K^{O_{s,k}(1)}$. In the latter case, we can again apply $\eqref{shaday}$ to obtain the desired bound, and so, it suffices to consider the former case. Here, we can use Lemma $\ref{yoc}$ with $l= 2^r$ to infer that
\[ E_{s}(A) \leq E_{2^r}(A) |A|^{2s - 2^{r+1}} \ll_{s,k} |A|^{2s - k + (2k-2) 2^{-r\log (1+ T_k^{-1})}}. \]
 Furthermore, since $s < 2^{r+1}$, we have $(\log s - 1)\log (1+ T_k^{-1}) < r \log (1+ T_k^{-1}),$ which, in turn, implies that 
\[ 2^{-r\log (1+ T_k^{-1})} < s^{-\log (1+ T_k^{-1}) } 2^{\log (1+ T_k^{-1}) } \leq 2s^{-\log (1+ T_k^{-1}) } . \]
Inserting this in the preceding inequality finishes the proof of Theorem $\ref{rtp}$.


\section{Proof of Proposition $\ref{kp}$}

In order to prove this result, we closely follow a combination of ideas from our work on \cite[Theorem $1.1$]{Ak2020}, which itself involved a heavy utilisation of the methods in \cite{BC2011}.
\par

For the rest of this section, we will fix $s \geq 4$ to be an even natural number and $A$ to be some finite subset of $\mathbb{R}$. We begin by claiming that it suffices to prove our theorem in the case when $E_{s}(A) = |A|^{2s-\nu}$. In order to see this, note that if $E_{s}(A) > |A|^{2s-\nu}$, then we can write $E_{s}(A) = |A|^{2s - \nu'}$ for some $1 \leq \nu' <\nu$. Moreover, assuming the theorem to hold for sets $A$ and real numbers $\nu'$ satisfying $E_{s}(A) = |A|^{2s-\nu'}$, we would have that either
\[ E_{s/2}(A) > |A|^{2s - \nu' + \delta}, \] 
or there exists some subset $A'$ of $A$ such that $|A'| \gg |A|^{1 - 82\delta}$ and 
\[ |mA'-nA'| \ll_{m,n} |A|^{\nu' + 240(m+n)\delta} \]
for each $n,m \in \mathbb{N} \cup \{0\}$. Observing the fact that $\nu' < \nu$ confirms our claim.
\par


Thus, from this point onwards, we may assume that $A$ satisfies
\begin{equation} \label{asm34}
E_{s}(A) = |A|^{2s-\nu},
\end{equation}
and that
\begin{equation} \label{assum1}
E_{s/2}(A) \leq |A|^{s - \nu   + \delta}, 
\end{equation}
since if the latter inequality does not hold, we are done. We will now utilise these inequalities to construct a hypergraph $G$ on $A^s$ such that there are few distinct sums of the form $a_1 + \dots + a_s$, with $(a_1, \dots, a_s) \in G$. Thus, for each $t \in \mathbb{N}$ and for each $H \subseteq A^{t}$, we define the restricted sumset
\[  \Sigma(H) = \{ a_1 + \dots + a_t \ | \ (a_1, \dots, a_t) \in H\},\]
 and with this notation in hand, we record the following lemma. 
 
 \begin{lemma} \label{7lem1}
Let $A$ be a finite set of real numbers satisfying inequalities $\eqref{asm34}$ and $\eqref{assum1}$. Then there exists a hypergraph $G \subseteq A^s$ such that 
 \[ |G| > |A|^{s-\delta}/2 \ \text{and} \ |\Sigma(G)| \leq 4|A|^{\nu}. \]
\end{lemma}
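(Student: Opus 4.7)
The plan is to use a standard popularity/dyadic pigeonhole argument on the representation function, pushing in just enough precision so that the optimal threshold delivers both bounds simultaneously.

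First, I would set up notation by writing $r_s(n)$ for the number of representations of $n$ as $a_1 + \dots + a_s$ with $a_i \in A$, so that $\sum_n r_s(n) = |A|^s$ and $\sum_n r_s(n)^2 = E_s(A) = |A|^{2s-\nu}$ (using the reduction in the paragraph preceding the lemma, which allows me to replace $\geq$ by $=$ in the energy hypothesis). The key quantitative input is Lemma~3.1, which states $\sup_n r_s(n) \leq E_{s/2}(A)$. Combined with $\eqref{assum1}$, this gives the pointwise bound
\[ \sup_n r_s(n) \leq |A|^{s-\nu+\delta}. \]

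Next, I would pick the threshold $T = |A|^{s-\nu}/2$ and define
\[ G = \{(a_1,\dots,a_s) \in A^s \ | \ r_s(a_1 + \dots + a_s) \geq T\}, \]
so that $\Sigma(G) = \{n \ | \ r_s(n) \geq T\}$. The upper bound on $|\Sigma(G)|$ follows immediately: Markov-style, $|\Sigma(G)| \cdot T^2 \leq \sum_n r_s(n)^2 = |A|^{2s-\nu}$, whence $|\Sigma(G)| \leq 4 |A|^{\nu}$ by the choice of $T$.

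For the lower bound on $|G|$, I would use the standard popular/unpopular split of $E_s(A)$. The unpopular contribution is at most
\[ \sum_{n\,:\, r_s(n) < T} r_s(n)^2 \leq T \sum_{n\,:\, r_s(n)<T} r_s(n) \leq T \cdot |A|^s = \tfrac{1}{2} |A|^{2s-\nu}, \]
so the popular $n$'s carry at least half the energy: $\sum_{n\,:\, r_s(n) \geq T} r_s(n)^2 \geq \tfrac{1}{2}|A|^{2s-\nu}$. Dividing by the pointwise bound $\sup_n r_s(n) \leq |A|^{s-\nu+\delta}$ from Lemma~3.1 then yields
\[ |G| = \sum_{n\,:\, r_s(n) \geq T} r_s(n) \geq \frac{|A|^{2s-\nu}/2}{|A|^{s-\nu+\delta}} = \frac{|A|^{s-\delta}}{2}, \]
which is the required lower bound. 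No step here presents a real obstacle; the only subtlety is that both conclusions must be obtained from the same threshold $T$, and the choice $T \sim |A|^{s-\nu}$ is forced because it is the averaged value of $r_s(n)$ over its support (it matches $E_s(A)/|A|^s$), so it is exactly the level at which a positive proportion of the energy concentrates while still keeping $|\Sigma(G)|$ on the order of $|A|^\nu$.
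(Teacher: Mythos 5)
Your proposal is correct and follows essentially the same route as the paper: the same popularity threshold $T = \tfrac12|A|^{s-\nu}$, the same Markov-type bound $|\Sigma(G)|T^2 \le E_s(A)$ for the upper bound, and the same division of the popular part of the energy by $\sup_n r_s(n) \le E_{s/2}(A) \le |A|^{s-\nu+\delta}$ (via Lemma \ref{yoc} and \eqref{assum1}) for the lower bound. The only cosmetic difference is that the paper tracks strict inequalities to get $|G| > |A|^{s-\delta}/2$, which your argument also yields since the unpopular terms satisfy $r_s(n) < T$ strictly.
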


\begin{proof}
Writing $S = \{ n \in \mathbb{R} \ | \ r_{s}(n) \geq 2^{-1}|A|^{s- \nu   } \},$ we see that 
\[ \sum_{n \notin S} r_{s}(n)^2 <  2^{-1}|A|^{s- \nu   } \sum_{n} r_{s}(n) = 2^{-1} |A|^{2s - \nu  }, \]
which can then be combined with $\eqref{sda}$ and $\eqref{asm34}$ to get
\[ \sum_{n \in S} r_{s}(n)^2 =  E_{s}(A) - \sum_{n \notin S} r_{s}(n)^2 > 2^{-1} |A|^{2s - \nu   }.\]
We can now use Lemma $\ref{yoc}$ with $\eqref{assum1}$ and the preceding inequalities to deduce that
\[ |A|^{s - \nu   + \delta} \sum_{n \in S} r_{s}(n) \geq \sum_{n \in S} r_{s}(n)^2 > 2^{-1} |A|^{2s - \nu   }, \]
and consequently, we have $\sum_{n \in S} r_{s}(n) > 2^{-1} |A|^{s- \delta}.$ Moreover, we observe that
\[  |A|^{2s - \nu   } = E_{s}(A) \geq \sum_{n \in S} r_{s}(n)^2 \geq (2^{-1}|A|^{s- \nu   })^2 |S|, \]
whence, $|S| \leq 4 |A|^{\nu   }.$ With these bounds in hand, we define our hypergraph $G \subseteq {A}^s$ as $G = \{ (a_1, \dots, a_s) \in {A}^s \ | \ \sum_{i=1}^{s} a_i \in S \}.$ This implies that
\[ |G| = \sum_{n \in S} r_{s}(n) > 2^{-1}|A|^{s- \delta} \ \text{and} \ |\Sigma(G)| = |S| \leq 4|A|^{\nu},\]
and so, we finish the proof of our lemma.
\end{proof}

We now perform some standard graph theoretic pruning in order to obtain large subsets of $A^{s/2}$ satisfying suitable combinatorial properties. We start this step of our proof by introducing some notation, and so, given any $t \in \mathbb{N}$, any $H' \subseteq {A}^{t}$, any $n\in \mathbb{R}$ and any $\vec{y} = (y_1, \dots, y_t) \in {A}^{t}$, we define 
\[ \Sigma(\vec{y}) = y_1 + \dots + y_t \ \text{and} \ r(H';n) = |\{ \vec{y} \in H' \ | \ \Sigma(\vec{y})  = n \}|. \]
As before, a simple double counting argument delivers the expression
\begin{equation} \label{stup3}
 \sum_{n \in \Sigma(H')} r(H';n) = |H'|. 
 \end{equation}
Moreover, given $u,v \in \mathbb{N}$, and $\vec{w} \in A^{u}, \vec{w}' \in A^{v}$, we write $(\vec{w},\vec{w}') = (w_1, \dots, w_{u}, w_1', \dots, w_{v}')$. Finally, given $\vec{x} \in {A}^{s/2}$ and $H \subseteq {A}^{s}$, we write 
\[R_{H}(\vec{x}) = \{ \vec{y} \in {A}^{s/2} \ | \ (\vec{x},\vec{y}) \in H  \}.\]
\par

We note that
\[ \sum_{\vec{y} \in {A}^{s/2} } \sum_{\vec{x} \in {A}^{s/2}} \mathds{1}_{R_{G}(\vec{x})}(\vec{y}) =  \sum_{\vec{x} \in {A}^{s/2}} |R_{G}(\vec{x})| = |G| > 2^{-1}|A|^{s- \delta}, \]
whence, a straightforward application of the Cauchy-Schwarz inequality gives us
\[\sum_{\vec{y} \in {A}^{s/2} }  ( \sum_{\vec{x} \in {A}^{s/2}}  \mathds{1}_{R_{G}(\vec{x})}(\vec{y}))^2  > 2^{-2} |A|^{2s - 2\delta} |A|^{-s/2}. \]
Simplifying the above, we get
 \[ \sum_{\vec{x},\vec{x}' \in {A}^{s/2}} |R_{G}(\vec{x}) \cap R_{G}(\vec{x}')|  = \sum_{\vec{y} \in {A}^{s/2}} \sum_{\vec{x},\vec{x}' \in {A}^{s/2}}  \mathds{1}_{R_{G}(\vec{x})}(\vec{y})  \mathds{1}_{R_{G}(\vec{x}')}(\vec{y}) > 2^{-2} |A|^{3s/2 - 2\delta}, \]
whereupon, we note that there must exist some $\vec{x}$ in ${A}^{s/2}$ such that
\[ \sum_{\vec{x}' \in {A}^{s/2}} |R_{G}(\vec{x}) \cap R_{G}(\vec{x}')| > 2^{-2} |A|^{s - 2\delta}. \]
This leads us to define the hypergraph $G_1 = \{ (\vec{y},\vec{z}) \in G \ | \ \vec{y} \in {A}^{s/2} \ \text{and} \ \vec{z} \in R_{G}(x) \}$, in which case, the preceding inequality implies that
\[ |G_1| = \sum_{\vec{y} \in {A}^{s/2}} |R_{G}(\vec{y}) \cap R_{G}(\vec{x})| > 2^{-2} |A|^{s - 2\delta}. \]
\par

Writing 
\begin{equation} \label{ydef}
Y = \{ \vec{y} \in {A}^{s/2} \ | \ |R_{G}(\vec{y}) \cap R_{G}(\vec{x})| \geq 2^{-3} |A|^{s/2 - 2\delta} \}, 
\end{equation}
we observe that
\[ \sum_{\vec{y} \notin Y} |R_{G}(\vec{y}) \cap R_{G}(\vec{x})| < 2^{-3} |A|^{s/2 - 2\delta} \sum_{\vec{y} \notin Y} 1 \leq 2^{-3} |A|^{s - 2\delta}, \]
and as a result, we discern that
\[ \sum_{\vec{y} \in Y} |R_{G}(\vec{y}) \cap R_{G}(\vec{x})| = \sum_{\vec{y} \in {A}^{s/2}} |R_{G}(\vec{y}) \cap R_{G}(\vec{x})|  -  \sum_{\vec{y} \notin Y} |R_{G}(\vec{y}) \cap R_{G}(\vec{x})| > 2^{-3}|A|^{s - 2\delta}. \]
This implies that $|Y| |A|^{s/2} > 2^{-3} |A|^{s - 2\delta},$ whenceforth, we have $|Y| > 2^{-3} |A|^{s/2 - 2 \delta}$. Furthermore, since
\[ \sum_{\vec{z} \in R_{G}(\vec{x})} |  \{ \vec{y} \in Y \ | \ (\vec{y},\vec{z}) \in G_1  \}| = \sum_{\vec{y} \in Y} |R_{G}(\vec{y}) \cap R_{G}(\vec{x})| > 2^{-3}|A|^{s - 2\delta}, \]
there exists some $\vec{z} \in R_{G}(x)$ such that
\begin{equation} \label{lowerbdy1}
 |\{ \vec{y} \in Y \ | \ (\vec{y},\vec{z}) \in G_1\}| > 2^{-3} |A|^{s/2 - 2 \delta}.
 \end{equation}
Fixing such a $\vec{z}$, we write $Y_1 = \{ \vec{y} \in Y \ | (\vec{y},\vec{z}) \in G_1\}$. 
\par

Next, we consider the set $S_1 \subseteq \Sigma(Y_1)$, where
\[ S_1 = \bigg\{ \ n \in \Sigma(Y_1) \ \bigg| \ r(Y_1; n) >  \frac{|Y_1|}{2|\Sigma(Y_1)| } \ \bigg\}. \]
Upon defining $Y_2 = \{ \vec{y} \in Y_1 \ | \ \Sigma(\vec{y}) \in S_1 \}$, we see that
\[ |Y_1 \setminus Y_2| = \sum_{n \in \Sigma(Y_1 \setminus Y_2) } r(Y_1\setminus Y_2; n)  \ \leq \ 2^{-1} | \Sigma(Y_1)|^{-1}  |Y_1| |\Sigma(Y_1 \setminus Y_2)| \ \leq \ 2^{-1} |Y_1|,\]
which, when amalgamated with $\eqref{lowerbdy1}$, delivers the bound
\begin{equation} \label{lwbdy2}
 |Y_2| = |Y_1| - |Y_1 \setminus Y_2| \geq 2^{-1} |Y_1| > 2^{-4} |A|^{s/2 - 2 \delta}. 
 \end{equation}
We recall that $Y_2 \subseteq Y_1 \subseteq Y$, and so, $\eqref{ydef}$ implies that for each $\vec{y} \in Y_2$, we have
\begin{equation} \label{rdlbd}
|R_{G}(\vec{y}) \cap R_{G}(\vec{x})| \geq 2^{-3} |A|^{s/2 - 2\delta}.
\end{equation}
Lastly, we claim that
\begin{equation}  \label{hndsgt}
 |\Sigma(Y_2)| \leq | \Sigma(Y_1)| \leq |\Sigma(G_1)|, \ \text{and} \ |\Sigma(R_{G}(\vec{y}) \cap R_{G}(\vec{x}))| \leq |\Sigma(G_1) |
 \end{equation}
for each $\vec{y} \in Y_2$. In order to see this, note that for each $\vec{y} \in Y_1$, we have $(\vec{y}, \vec{z}) \in G_1$, and so, $\Sigma(Y_1) + \Sigma(\vec{z}) \subseteq \Sigma(G_1)$. This combines with the fact that $Y_2 \subseteq Y_1$ to deliver the first inequality stated in $\eqref{hndsgt}$. Similarly, we may deduce the second inequality in $\eqref{hndsgt}$ by noting that for each $\vec{y}' \in R_{G}(\vec{x})$, we have $(\vec{x}, \vec{y}') \in G_1$. 
\par


With the sets $Y_2$ and $R_{G}(\vec{x})$ in hand, we will now use their suitable combinatorial properties to extract some additive structure between $\Sigma(Y_2)$ and $\Sigma(R_{G}(\vec{x}))$. This allows for a combined application of the Balog--Szemer\'edi--Gowers theorem and the Pl{\"u}nnecke--Ruzsa theorem, which is what we will proceed with in the forthcoming lemma. For ease of exposition, we denote $ M = 4|A|^{\nu}$ and $\alpha = 2^{-37} |A|^{-20 \delta}$.

\begin{lemma} \label{7lem2}
There exists a set $U' \subseteq \Sigma(Y_2)$ such that for every $m, n \in \mathbb{N} \cup \{0\}$, we have
\begin{equation} \label{mdn}
 |U'| \geq 2^{-20} \alpha^4 M, \ \text{and} \ |mU'- nU'| \leq (2^{62} \alpha^{-12})^{m+n} |U'|.
 \end{equation}
\end{lemma}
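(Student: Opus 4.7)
The plan is to construct an auxiliary bipartite graph on the sumsets $U = \Sigma(Y_2)$ and $V = \Sigma(R_G(\vec{x}))$ whose edges have sums trapped in a small set, and then apply the Balog--Szemer\'edi--Gowers theorem (Lemma $\ref{balbsg}$) followed by Pl\"{u}nnecke--Ruzsa (Lemma $\ref{pr21}$) to produce $U'$. Both $U$ and $V$ have cardinality at most $M = 4|A|^\nu$: for $U$ this is $\eqref{hndsgt}$, and for $V$ this follows because $(\vec{x}, \vec{w}) \in G$ whenever $\vec{w} \in R_G(\vec{x})$, forcing $\Sigma(\vec{x}) + V \subseteq S$.

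I would then define $\mathcal{H} \subseteq U \times V$ by placing $(u, v) \in \mathcal{H}$ whenever there exist $\vec{y} \in Y_2$ and $\vec{w} \in R_G(\vec{y}) \cap R_G(\vec{x})$ with $\Sigma(\vec{y}) = u$ and $\Sigma(\vec{w}) = v$. Every such pair produces an edge of $G$, so the associated sumset $\{u+v : (u,v) \in \mathcal{H}\}$ is contained in $S = \Sigma(G)$ and has size at most $M$. The key quantitative step will be to lower bound $|\mathcal{H}|$: by $\eqref{lwbdy2}$ and $\eqref{rdlbd}$ the total number of realizations $T = \sum_{\vec{y} \in Y_2} |R_G(\vec{y}) \cap R_G(\vec{x})|$ is at least $2^{-7}|A|^{s-4\delta}$; since each $(u,v) \in \mathcal{H}$ receives at most $r_{s/2}(u) r_{s/2}(v)$ realizations, Cauchy--Schwarz gives
\[
T^2 \leq |\mathcal{H}| \cdot \Big(\sum_u r_{s/2}(u)^2\Big)\Big(\sum_v r_{s/2}(v)^2\Big) = |\mathcal{H}| \cdot E_{s/2}(A)^2,
\]
and $\eqref{assum1}$ then forces $|\mathcal{H}| \geq 2^{-14}|A|^{2\nu - 10\delta} \geq \alpha M^2$ with $\alpha = 2^{-37}|A|^{-20\delta}$.

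I would then apply Lemma $\ref{balbsg}$ to $(U, V, \mathcal{H})$ with $N = M$, producing $U' \subseteq U = \Sigma(Y_2)$ with $|U'| \gs \alpha^3 M/\log(32/\alpha)$ and $|U' + U'| \ls \alpha^{-7}\log(32/\alpha) M$. The elementary bound $\alpha \log(32/\alpha) \leq 32/e < 16$, valid for $\alpha \in (0,32)$, absorbs the logarithmic factors and yields $|U'| \geq 2^{-20}\alpha^4 M$ together with the doubling estimate $|U'+U'| \leq 2^{62}\alpha^{-12}|U'|$. Finally, Pl\"{u}nnecke--Ruzsa (Lemma $\ref{pr21}$) promotes this to $|mU' - nU'| \leq (2^{62}\alpha^{-12})^{m+n}|U'|$ for every $m, n \in \mathbb{N} \cup \{0\}$.

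The main obstacle will be the Cauchy--Schwarz counting step: one must identify $E_{s/2}(A)$ as the correct second moment so that the hypothesis $\eqref{assum1}$ can be deployed, and then chase the numerical constants through the BSG and Pl\"{u}nnecke--Ruzsa applications to make sure they collapse into the precise form claimed in the statement.
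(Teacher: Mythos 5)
Your proof is correct and shares the paper's architecture --- the bipartite graph on $U=\Sigma(Y_2)$ and $V=\Sigma(R_{G}(\vec{x}))$, Lemma \ref{balbsg} with $N=M$, then Lemma \ref{pr21} --- but the middle counting step is genuinely different and in fact cleaner. The paper lower-bounds $|\Sigma(Y_2)|$ and each $|\Sigma(R_{G}(\vec{y})\cap R_{G}(\vec{x}))|$ separately (each by one Cauchy--Schwarz against $E_{s/2}(A)$ via \eqref{assum1}), multiplies them to control $\sum_{n\in\Sigma(G_1)}r(U,V;n)$, and then must run a second Cauchy--Schwarz followed by a popularity argument, restricting to sums $n$ with $r(U,V;n)\geq\alpha M$, precisely because $U+V$ need not be small and Lemma \ref{balbsg} demands that the edge-sums lie in a set of size at most $N$. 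Your realizability-based definition of $\mathcal{H}$ makes that containment automatic (every edge-sum is $\Sigma((\vec{y},\vec{w}))\in\Sigma(G)$, of size at most $M$ by Lemma \ref{7lem1}), and a single Cauchy--Schwarz comparing the $T\geq 2^{-7}|A|^{s-4\delta}$ realizations coming from \eqref{lwbdy2} and \eqref{rdlbd} against $E_{s/2}(A)^2$ yields $|\mathcal{H}|\geq 2^{-14}|A|^{2\nu-10\delta}\geq\alpha M^2$ in one stroke. The constants then go through exactly as you say ($\alpha\log(32/\alpha)\leq 48$ suffices to absorb the logarithmic factors), so what your route buys is the elimination of the popularity/dyadic step, with a slightly stronger edge density ($|A|^{-10\delta}$ versus the $|A|^{-20\delta}$ the paper's choice of $\alpha$ requires) as a free by-product; what it costs is nothing, since the same hypothesis \eqref{assum1} powers both versions.
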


\begin{proof}
We begin our proof by applying the Cauchy-Schwarz inequality on $\eqref{stup3}$ to infer that
\[ |\Sigma(H)| \sum_{n \in \Sigma(H)} r(H; n)^2 \geq |H|^2, \]
for any $H \subseteq A^{s/2}$. Moreover, since $\sum_{n \in \Sigma(H)} r(H; n)^2  \leq E_{s/2}(A)$, we can combine the preceding inequality and $\eqref{assum1}$ to note that
\[ |\Sigma(H)| \geq |H|^2 E_{s/2}(A)^{-1} \geq |H|^2 |A|^{-s + \nu    -\delta}. \]
Substituting $H = Y_2$ in the above and combining this with $\eqref{lwbdy2}$, we deduce that
\[ |\Sigma(Y_2)|  > 2^{-8} |A|^{s - 4 \delta}|A|^{-s + \nu    -\delta} = 2^{-8} |A|^{\nu   - 5\delta}. \]
Similarly, noting $\eqref{rdlbd}$, we infer that for each $\vec{y} \in Y_2$, we have
\[ |\Sigma(R_{G}(\vec{y}) \cap R_{G}(\vec{x})| \geq 2^{-6} |A|^{s - 4\delta}|A|^{-s + \nu    -\delta} = 2^{-6} |A|^{\nu    - 5 \delta}. \]
Thus, upon writing $U = \Sigma(Y_2)$ and $V = \Sigma(R_{G}(\vec{x}))$ and defining
\[ r(U,V; n) = |\{(u,v) \in U \times V \ | \ u + v = n \}| \]
for each $n \in \mathbb{R}$, we see that the preceding discussion implies that
\begin{align*}
 \sum_{n \in \Sigma(G_1)} r(U,V; n) & =\sum_{n \in \Sigma(G_1)}| \{   (a,b) \in \Sigma(Y_2) \times \Sigma(R_{G}(\vec{x})) \ | \ n = a+ b  \}| \\
 & \geq |\Sigma(Y_2)| \min_{\vec{y} \in Y_2} |\Sigma(R_{G}(\vec{y}) \cap R_{G}(\vec{x}))| \geq2^{-14} |A|^{2\nu -  10 \delta}.
 \end{align*}
Applying the Cauchy-Schwarz inequality on the left hand side above, we get
\[ | \Sigma(G_1)|  \sum_{n \in \Sigma(G_1)} r(U,V; n)^2 \geq 2^{-28} |A|^{4\nu  - 20 \delta}. \]
Since $G_1 \subseteq G$, we have $|\Sigma(G_1)| \leq 4|A|^{\nu   }$, and as a result, we deduce that
\[ \sum_{n \in \Sigma(G_1)} r(U,V; n)^2 \geq 2^{-28} |A|^{4\nu  - 20 \delta} |\Sigma(G_1)|^{-1} \geq 2^{-30} |A|^{3\nu - 20 \delta }. \]
\par

We may rewrite the above inequality as
\[ \sum_{n \in \Sigma(G_1)} r(U,V; n)^2 \geq 2^{-30} |A|^{- 20 \delta } |A|^{3\nu} = 2\alpha M^3, \]
whereupon, we define $\mathcal{S} = \{ n \in U + V \ | \ r(U,V; n) \geq \alpha M \}$. If $|\mathcal{S}| > M$, we let $\mathcal{S}'$ be a subset of $\mathcal{S}$ such that $|\mathcal{S}'| = M$, and in this case, we trivially have
\[ \sum_{n \in \mathcal{S}'} r(U,V; n) \geq  \alpha M |S'| = \alpha M^2 . \]
On the other hand, if $|\mathcal{S}| \leq M$, we let $\mathcal{S}' = \mathcal{S}$, and note that
\begin{align*} 
 \sum_{n \in \mathcal{S}'} r(U,V; n)^2 
 & = \sum_{n \in U+V} r(U,V; n)^2 - \sum_{n \notin \mathcal{S}'} r(U,V; n)^2  > 2\alpha M^3 - \alpha M \sum_{n \notin \mathcal{S}'} r(U,V; n) \nonumber \\ 
 & \geq 2 \alpha M^3  - \alpha M |U| |V|  \geq 2 \alpha M^3 - \alpha M^3 = \alpha M^3. 
 \end{align*}
Moreover recalling $\eqref{hndsgt}$, we see that $|U|, |V| \leq |\Sigma(G)| \leq 4|A|^{\nu} = M$, and so, we get $r(U,V; n) \leq |U| \leq M$. Combining this with the preceding discussion then gives us
\[\sum_{n \in \mathcal{S}'} r(U,V; n) \geq \alpha M^2 . \]
\par

In either case, we define $\mathcal{G} \subseteq U \times V$ to be $\mathcal{G} = \{ (u,v) \in U \times V \ | \ u + v \in \mathcal{S}' \},$ and note that $|\mathcal{G}|  = \sum_{n \in \mathcal{S}'} r(U,V; n) \geq \alpha M^2.$ 
We may now use Lemma $\ref{balbsg}$ to deduce the existence of $U' \subseteq U$ such that 
\[ |U'| \geq 2^{-16} 3 \alpha^3 M (\log (32/\alpha))^{-1} \geq 2^{-20} \alpha^{4} M \]
and
\[ |U' + U'| \leq 2^{38} 3^{-1} \log(32/\alpha) \alpha^{-7} M \leq 2^{42} \alpha^{-8} M.\]
In particular, these imply that
\[ |U'+U'| \leq 2^{42} \alpha^{-8} 2^{20} \alpha^{-4} |U'| = 2^{62} \alpha^{-12} |U'|, \]
which further combines with Lemma $\ref{pr21}$ to give us
\[  |mU'- nU'| \leq (2^{62} \alpha^{-12})^{m+n} |U'| \]
for every $m,n \in \mathbb{N} \cup \{0\}$, and so, we are done.
\end{proof}

Choosing the set $U' \subseteq U = \Sigma(Y_2)$ as in the conclusion of Lemma $\ref{7lem2}$, we may define $Y_3 = \{ \vec{y} \in Y_1 \ | \ \Sigma(\vec{y}) \in U' \},$ and note that
\[  |Y_3|   =   \sum_{n \in U'}  r(Y_1; n) \geq |U'|   \frac{|Y_1|}{2| \Sigma(Y_1)| }  \geq  2^{-21} \alpha^{4} M \frac{|Y_1|}{ |\Sigma(G)| }  \geq 2^{-21} \alpha^{4} |Y_1|,\]
where the last two inequalities follow from combining inequalities $\eqref{hndsgt}$ and $\eqref{mdn}$ along with the fact that $|\Sigma(G_1)| \leq |\Sigma(G)| \leq M$. Putting this together with the lower bound $\eqref{lowerbdy1}$ for $|Y_1|$, we find that
\[ |Y_3| \geq  2^{-24} \alpha^{4} |A|^{s/2 - 2 \delta} .\]
 Moreover, since $Y_3 \subseteq {A}^{s/2}$, there exists some $\vec{w} \in {A}^{s/2-1}$ and a subset ${A}' \subseteq {A}$ such that 
\[ |{A}' | \geq |Y_3| |A|^{-s/2 + 1}  \geq 2^{-24} \alpha^{4} |A|^{1 - 2 \delta}, \]
 and $(\vec{w},a) \in Y_3 \ \text{for each} \ a \in {A'}.$ In particular, this implies that for each $a \in {A'}$, the element $\Sigma(\vec{w}) + a \in \Sigma(Y_3)$, whereupon, for each $m, n \in \mathbb{N} \cup \{0\}$, we have
\[ m {A'} - n {A'} + (m-n)\Sigma(\vec{w}) \subseteq m \Sigma(Y_3) - n \Sigma(Y_3) \subseteq m U' - nU'. \]
Combining this with $\eqref{mdn}$, we conclude that
\[ |m {A'} - n {A'}| \leq (2^{62} \alpha^{-12})^{m+n} |U'| \leq (2^{62} \alpha^{-12})^{m+n} M. \]
Substituting $M = 4|A|^{\nu}$ and $\alpha = 2^{-37} |A|^{-20 \delta}$ in the above, we see that
\[  |{A}' | \geq 2^{-24} \alpha^{4} |A|^{1 - 2 \delta} = 2^{-172} |A|^{1-82 \delta}, \ \text{and} \ |m {A'} - n {A'}| \leq 2^{506(m+n)+2}  |A|^{\nu+ 240 (m+n) \delta}, \]
and consquently, we finish our proof of Proposition $\ref{kp}$.


\bibliographystyle{amsbracket}
\providecommand{\bysame}{\leavevmode\hbox to3em{\hrulefill}\thinspace}

\end{document}